\date{\today}
\newtheorem{theorem}{Theorem}
\newtheorem{proposition}{Proposition}
\newtheorem{corollary}{Corollary}
\newtheorem{lemma}{Lemma}
\theoremstyle{definition}
\newtheorem{definition}{Definition}
\newtheorem{example}{Example}
\newtheorem{remark}{Remark}
\begin{document}

\title[On the semigroup $\boldsymbol{B}_{\omega}^{\mathscr{F}_n}$ which is generated by the family $\mathscr{F}_n$]{On the semigroup $\boldsymbol{B}_{\omega}^{\mathscr{F}_n}$ which is generated by the family $\mathscr{F}_n$ of finite bounded intervals of $\omega$}
\author{Oleg Gutik and Olha Popadiuk}
\address{Faculty of Mechanics and Mathematics,
Lviv University, Universytetska 1, Lviv, 79000, Ukraine}
\email{oleg.gutik@lnu.edu.ua, o.popadiuk@gmail.com}

\keywords{Bicyclic extension, Rees congruence, semitopological semigroup, topological semigroup, bicyclic monoid, inverse semigroup, $\omega_{\mathfrak{d}}$-compact, compact, closure.}

\subjclass[2020]{Primary 20A15, 22A15, Secondary 54D10, 54D30, 54H12}

\begin{abstract}
We study the semigroup $\boldsymbol{B}_{\omega}^{\mathscr{F}}$, which is introduced in the paper [O. Gutik and M. Mykhalenych, \emph{On some generalization of the bicyclic monoid}, Visnyk Lviv. Univ. Ser. Mech.-Mat.
\textbf{90} (2020), 5--19 (in Ukrainian)], in the case when the family $\mathscr{F}_n$ generated by the set $\{0,1,\ldots,n\}$. We show that the Green relations $\mathscr{D}$ and $\mathscr{J}$ coincide in $\boldsymbol{B}_{\omega}^{\mathscr{F}_n}$, the semigroup $\boldsymbol{B}_{\omega}^{\mathscr{F}_n}$ is isomorphic to the semigroup $\mathscr{I}_\omega^{n+1}(\overrightarrow{\mathrm{conv}})$ of partial convex order isomorphisms of $(\omega,\leqslant)$ of the rank $\leqslant n+1$, and  $\boldsymbol{B}_{\omega}^{\mathscr{F}_n}$ admits only Rees congruences. Also, we study shift-continuous topologies on the semigroup $\boldsymbol{B}_{\omega}^{\mathscr{F}_n}$. In particular we prove that for any shift-continuous $T_1$-topology $\tau$ on the semigroup $\boldsymbol{B}_{\omega}^{\mathscr{F}_n}$ every non-zero element of $\boldsymbol{B}_{\omega}^{\mathscr{F}_n}$ is an isolated point of $(\boldsymbol{B}_{\omega}^{\mathscr{F}_n},\tau)$, $\boldsymbol{B}_{\omega}^{\mathscr{F}_n}$ admits the unique compact shift-continuous $T_1$-topology, and every $\omega_{\mathfrak{d}}$-compact shift-continuous $T_1$-topology is compact. We describe the closure of the semigroup $\boldsymbol{B}_{\omega}^{\mathscr{F}_n}$ in a Hausdorff semitopological semigroup and prove the criterium when a topological inverse semigroup $\boldsymbol{B}_{\omega}^{\mathscr{F}_n}$ is $H$-closed in the class of Hausdorff topological semigroups.
\end{abstract}

\maketitle


\section{Introduction, motivation and main definitions}

We shall follow the terminology of~\cite{Carruth-Hildebrant-Koch-1983, Clifford-Preston-1961, Clifford-Preston-1967, Engelking-1989, Ruppert-1984}. By $\omega$ we denote the set of all non-negative integers.

Let $\mathscr{P}(\omega)$ be  the family of all subsets of $\omega$. For any $F\in\mathscr{P}(\omega)$ and $n,m\in\omega$ we put $n-m+F=\{n-m+k\colon k\in F\}$ if $F\neq\varnothing$ and $n-m+\varnothing=\varnothing$. A subfamily $\mathscr{F}\subseteq\mathscr{P}(\omega)$ is called \emph{${\omega}$-closed} if $F_1\cap(-n+F_2)\in\mathscr{F}$ for all $n\in\omega$ and $F_1,F_2\in\mathscr{F}$.

We denote $[0;0]=\{0\}$ and $[0;k]=\{0,\ldots,k\}$ for any positive integer $k$. The set $[0;k]$, $k\in\omega$, is called an \emph{initial interval} of $\omega$.

A \emph{partially ordered set} (or shortly a \emph{poset}) $(X,\leqq)$ is the set $X$ with the reflexive, antisymmetric and transitive relation $\leqq$. In this case relation $\leqq$ is called a partial order on $X$. A partially ordered set $(X,\leqq)$ is \emph{linearly ordered} or is a \emph{chain} if $x\leqq y$ or $y\leqq x$ for any $x,y\in X$. A map $f$ from a poset $(X,\leqq)$ onto a poset $(Y,\eqslantless)$ is said to be an order isomorphism if $f$ is bijective and $x\leqq y$ if and only if $f(x)\eqslantless f(y)$. A \emph{partial order isomorphism} $f$ from a poset $(X,\leqq)$ into a poset $(Y,\eqslantless)$ is an order isomorphism from a subset $A$ of a poset $(X,\leqq)$ into a subset $B$ of a poset $(Y,\eqslantless)$. For any elements $x$ of a poset $(X,\leqq)$ we denote
\begin{equation*}
  {\uparrow_{\leqq}}x=\{y\in X\colon x\leqq y\} \qquad \hbox{and} \qquad {\downarrow_{\leqq}}x=\{y\in X\colon y\leqq x\}.
\end{equation*}

A semigroup $S$ is called {\it inverse} if for any
element $x\in S$ there exists a unique $x^{-1}\in S$ such that
$xx^{-1}x=x$ and $x^{-1}xx^{-1}=x^{-1}$. The element $x^{-1}$ is
called the {\it inverse of} $x\in S$. If $S$ is an inverse
semigroup, then the mapping $\operatorname{inv}\colon S\to S$
which assigns to every element $x$ of $S$ its inverse element
$x^{-1}$ is called the {\it inversion}.

If $S$ is a semigroup, then we shall denote the subset of all
idempotents in $S$ by $E(S)$. If $S$ is an inverse semigroup, then
$E(S)$ is closed under multiplication and we shall refer to $E(S)$ as a
\emph{band} (or the \emph{band of} $S$). Then the semigroup
operation on $S$ determines the following partial order $\preccurlyeq$
on $E(S)$: $e\preccurlyeq f$ if and only if $ef=fe=e$. This order is
called the {\em natural partial order} on $E(S)$. A \emph{semilattice} is a commutative semigroup of idempotents. By $(\omega,\min)$ or $\omega_{\min}$ we denote the set $\omega$ with the semilattice operation $x\cdot y=\min\{x,y\}$.

If $S$ is an inverse semigroup then the semigroup operation on $S$ determines the following partial order $\preccurlyeq$
on $S$: $s\preccurlyeq t$ if and only if there exists $e\in E(S)$ such that $s=te$. This order is
called the {\em natural partial order} on $S$ \cite{Wagner-1952}.

For semigroups $S$ and $T$ a map $\mathfrak{h}\colon S\to T$ is called a \emph{homomorphism} if $\mathfrak{h}(s_1\cdot s_2)=\mathfrak{h}(s_1)\cdot \mathfrak{h}(s_2)$ for all $s_1,s_2\in S$.

A \emph{congruence} on a semigroup $S$ is an equivalence relation $\mathfrak{C}$ on $S$ such that $(s,t)\in\mathfrak{C}$ implies that $(as,at),(sb,tb)\in\mathfrak{C}$ for all $a,b\in S$. Every congruence $\mathfrak{C}$ on a semigroup $S$ generates the \emph{associated natural homomorphism} $\mathfrak{C}^\natural\colon S\to S/\mathfrak{C}$ which assigns to each element $s$ of $S$ its congruence class $[s]_\mathfrak{C}$ in the quotient semigroup $S/\mathfrak{C}$. Also every homomorphism $\mathfrak{h}\colon S\to T$ of semigroups $S$ and $T$ generates the congruence $\mathfrak{C}_\mathfrak{h}$ on $S$: $(s_1,s_2)\in\mathfrak{C}_\mathfrak{h}$ if and only if $\mathfrak{h}(s_1)=\mathfrak{h}(s_2)$.

A nonempty subset $I$ of a semigroup $S$ is called an \emph{ideal} of $S$ if $SIS=\{asb\colon s\in I, \; a,b\in S\}\subseteq I$. Every ideal $I$ of a semigroup $S$ generates the congruence $\mathfrak{C}_I=(I\times I)\cup\Delta_S$ on $S$, which is called the \emph{Rees congruence} on $S$.

Let $\mathscr{I}_\lambda$ denote the set of all partial one-to-one transformations of $\lambda$ together with the following semigroup operation:
\begin{equation*}
    x(\alpha\beta)=(x\alpha)\beta \quad \mbox{if} \quad
    x\in\operatorname{dom}(\alpha\beta)=\{
    y\in\operatorname{dom}\alpha\colon
    y\alpha\in\operatorname{dom}\beta\}, \qquad \mbox{for} \quad
    \alpha,\beta\in\mathscr{I}_\lambda.
\end{equation*}
The semigroup $\mathscr{I}_\lambda$ is called the \emph{symmetric
inverse semigroup} over the cardinal $\lambda$~(see \cite{Clifford-Preston-1961}). For any $\alpha\in\mathscr{I}_\lambda$ the cardinality of $\operatorname{dom}\alpha$ is called the \emph{rank} of $\alpha$ and it is denoted by $\operatorname{rank}\alpha$. The symmetric inverse semigroup was introduced by V.~V.~Wagner~\cite{Wagner-1952}
and it plays a major role in the theory of semigroups.


Put
$\mathscr{I}_\lambda^n=\{ \alpha\in\mathscr{I}_\lambda\colon
\operatorname{rank}\alpha\leqslant n\}$,
for $n=1,2,3,\ldots$. Obviously,
$\mathscr{I}_\lambda^n$ ($n=1,2,3,\ldots$) are inverse semigroups,
$\mathscr{I}_\lambda^n$ is an ideal of $\mathscr{I}_\lambda$, for each $n=1,2,3,\ldots$. The semigroup
$\mathscr{I}_\lambda^n$ is called the \emph{symmetric inverse semigroup of
finite transformations of the rank $\leqslant n$} \cite{Gutik-Reiter-2009}. By
\begin{equation*}
\left({%
\begin{smallmatrix}
  x_1 & x_2 & \cdots & x_n \\
  y_1 & y_2 & \cdots & y_n \\
\end{smallmatrix}%
}\right)
\end{equation*}
we denote a partial one-to-one transformation which maps $x_1$ onto $y_1$, $x_2$ onto $y_2$, $\ldots$, and $x_n$ onto $y_n$. Obviously, in such case we have $x_i\neq x_j$ and $y_i\neq y_j$ for $i\neq j$ ($i,j=1,2,3,\ldots,n$). The empty partial map $\varnothing\colon \lambda\rightharpoonup\lambda$ is denoted by $\boldsymbol{0}$. It is obvious that $\boldsymbol{0}$ is zero of the semigroup $\mathscr{I}_\lambda^n$.

For a partially ordered set $(P, \leqq)$, a subset $X$ of $P$ is called \emph{order-convex}, if $x\leqq z\leqq y$ and $\{x, y\}\subset X$ implies that $z\in X$, for all $x, y, z\in P$ \cite{Harzheim=2005}. It is obvious that the set of all partial order isomorphisms between convex subsets of $(\omega,\leqslant)$ under the composition of partial self-maps forms an inverse subsemigroup of the symmetric inverse semigroup $\mathscr{I}_\omega$ over the set $\omega$. We denote this semigroup by $\mathscr{I}_\omega(\overrightarrow{\mathrm{conv}})$. We put $\mathscr{I}_\omega^n(\overrightarrow{\mathrm{conv}})=\mathscr{I}_\omega(\overrightarrow{\mathrm{conv}})\cap\mathscr{I}_\omega^n$ and it is obvious that $\mathscr{I}_\omega^n(\overrightarrow{\mathrm{conv}})$ is closed under the semigroup operation of $\mathscr{I}_\omega^n$ and the semigroup $\mathscr{I}_\omega^n(\overrightarrow{\mathrm{conv}})$ is called the \emph{inverse semigroup of
convex order isomorphisms of $(\omega,\leqslant)$ of the rank $\leqslant n$}.

The bicyclic monoid ${\mathscr{C}}(p,q)$ is the semigroup with the identity $1$ generated by two elements $p$ and $q$ subjected only to the condition $pq=1$. The semigroup operation on ${\mathscr{C}}(p,q)$ is determined as follows:
\begin{equation*}
    q^kp^l\cdot q^mp^n=q^{k+m-\min\{l,m\}}p^{l+n-\min\{l,m\}}.
\end{equation*}
It is well known that the bicyclic monoid ${\mathscr{C}}(p,q)$ is a bisimple (and hence simple) combinatorial $E$-unitary inverse semigroup and every non-trivial congruence on ${\mathscr{C}}(p,q)$ is a group congruence \cite{Clifford-Preston-1961}.

On the set $\boldsymbol{B}_{\omega}=\omega\times\omega$ we define the semigroup operation ``$\cdot$'' in the following way
\begin{equation*}
  (i_1,j_1)\cdot(i_2,j_2)=
  \left\{
    \begin{array}{ll}
      (i_1-j_1+i_2,j_2), & \hbox{if~} j_1\leqslant i_2;\\
      (i_1,j_1-i_2+j_2), & \hbox{if~} j_1\geqslant i_2.
    \end{array}
  \right.
\end{equation*}
It is well known that the semigroup $\boldsymbol{B}_{\omega}$ is isomorphic to the bicyclic monoid by the mapping $\mathfrak{h}\colon \mathscr{C}(p,q)\to \boldsymbol{B}_{\omega}$, $q^kp^l\mapsto (k,l)$ (see: \cite[Section~1.12]{Clifford-Preston-1961} or \cite[Exercise IV.1.11$(ii)$]{Petrich-1984}).

By $\mathbb{R}$ and $\omega_{\mathfrak{d}}$ we denote the set of real numbers with the usual topology and the infinite countable discrete space, respectively.

Let $Y$ be a topological space. A topological space $X$ is called:
\begin{itemize}
  \item \emph{compact} if any open cover of $X$ contains a finite subcover;
  \item \emph{countably compact} if each closed discrete subspace of $X$ is finite;
  \item \emph{$Y$-compact} if every continuous image of $X$ in $Y$ is compact.
\end{itemize}

A {\it topological} ({\it semitopological}) {\it semigroup} is a topological space together with a continuous (separately continuous) semigroup operation. If $S$ is a~semigroup and $\tau$ is a topology on $S$ such that $(S,\tau)$ is a topological semigroup, then we shall call $\tau$ a \emph{semigroup} \emph{topology} on $S$, and if $\tau$ is a topology on $S$ such that $(S,\tau)$ is a semitopological semigroup, then we shall call $\tau$ a \emph{shift-continuous} \emph{topology} on~$S$. An inverse topological semigroup with the continuous inversion is called a \emph{topological inverse semigroup}. 

Next we shall describe the construction which is introduced in \cite{Gutik-Mykhalenych=2020}.

Let $\boldsymbol{B}_{\omega}$ be the bicyclic monoid and $\mathscr{F}$ be an ${\omega}$-closed subfamily of $\mathscr{P}(\omega)$. On the set $\boldsymbol{B}_{\omega}\times\mathscr{F}$ we define the semigroup operation ``$\cdot$'' in the following way
\begin{equation*}
  (i_1,j_1,F_1)\cdot(i_2,j_2,F_2)=
  \left\{
    \begin{array}{ll}
      (i_1-j_1+i_2,j_2,(j_1-i_2+F_1)\cap F_2), & \hbox{if~} j_1\leqslant i_2;\\
      (i_1,j_1-i_2+j_2,F_1\cap (i_2-j_1+F_2)), & \hbox{if~} j_1\geqslant i_2.
    \end{array}
  \right.
\end{equation*}
In \cite{Gutik-Mykhalenych=2020} it is proved that if the family $\mathscr{F}\subseteq\mathscr{P}(\omega)$ is ${\omega}$-closed then $(\boldsymbol{B}_{\omega}\times\mathscr{F},\cdot)$ is a semigroup. Moreover, if an ${\omega}$-closed family  $\mathscr{F}\subseteq\mathscr{P}(\omega)$ contains the empty set $\varnothing$ then the set
$ 
  \boldsymbol{I}=\{(i,j,\varnothing)\colon i,j\in\omega\}
$ 
is an ideal of the semigroup $(\boldsymbol{B}_{\omega}\times\mathscr{F},\cdot)$. For any ${\omega}$-closed family $\mathscr{F}\subseteq\mathscr{P}(\omega)$ the following semigroup
\begin{equation*}
  \boldsymbol{B}_{\omega}^{\mathscr{F}}=
\left\{
  \begin{array}{ll}
    (\boldsymbol{B}_{\omega}\times\mathscr{F},\cdot)/\boldsymbol{I}, & \hbox{if~} \varnothing\in\mathscr{F};\\
    (\boldsymbol{B}_{\omega}\times\mathscr{F},\cdot), & \hbox{if~} \varnothing\notin\mathscr{F}
  \end{array}
\right.
\end{equation*}
is defined in \cite{Gutik-Mykhalenych=2020}. The semigroup $\boldsymbol{B}_{\omega}^{\mathscr{F}}$ generalizes the bicyclic monoid and the countable semigroup of matrix units. It is proved in \cite{Gutik-Mykhalenych=2020} that $\boldsymbol{B}_{\omega}^{\mathscr{F}}$ is a combinatorial inverse semigroup and Green's relations, the natural partial order on $\boldsymbol{B}_{\omega}^{\mathscr{F}}$ and its set of idempotents are described. The criteria of simplicity, $0$-simplicity, bisimplicity, $0$-bisimplicity of the semigroup $\boldsymbol{B}_{\omega}^{\mathscr{F}}$ and when $\boldsymbol{B}_{\omega}^{\mathscr{F}}$ has the identity, is isomorphic to the bicyclic semigroup or the countable semigroup of matrix units are given. In particularly in \cite{Gutik-Mykhalenych=2020} is proved that the semigroup $\boldsymbol{B}_{\omega}^{\mathscr{F}}$ is isomorphic to the semigrpoup of ${\omega}{\times}{\omega}$-matrix units if and only if $\mathscr{F}$ consists of a singleton set and the empty set.

The semigroup $\boldsymbol{B}_{\omega}^{\mathscr{F}}$ in the case when the family $\mathscr{F}$ consists of the empty set and some singleton subsets of $\omega$ is studied in \cite{Gutik-Lysetska=2021}. It is proved that the semigroup $\boldsymbol{B}_{\omega}^{\mathscr{F}}$ is isomorphic to the  subsemigroup $\mathscr{B}_{\omega}^{\Rsh}(\boldsymbol{F}_{\min})$ of the Brandt $\omega$-extension of the subsemilattice $(\boldsymbol{F},\min)$ of $(\omega,\min)$, where $\boldsymbol{F}=\bigcup\mathscr{F}$. Also topologizations of the semigroup  $\boldsymbol{B}_{\omega}^{\mathscr{F}}$ and its closure in semitopological semigroups are studied.

For any $n\in\omega$ we put $\mathscr{F}_n=\left\{[0;k]\colon k=0,\ldots,n\right\}$. It is obvious that $\mathscr{F}_n$ is an $\omega$-closed family of $\omega$.

In this paper we study the semigroup $\boldsymbol{B}_{\omega}^{\mathscr{F}_n}$. We show that the Green relations $\mathscr{D}$ and $\mathscr{J}$ coincide in $\boldsymbol{B}_{\omega}^{\mathscr{F}_n}$, the semigroup $\boldsymbol{B}_{\omega}^{\mathscr{F}_n}$ is isomorphic to the semigroup $\mathscr{I}_\omega^{n+1}(\overrightarrow{\mathrm{conv}})$, and  $\boldsymbol{B}_{\omega}^{\mathscr{F}_n}$ admits only Rees congruences. Also, we study shift-continuous topologizations of the semigroup $\boldsymbol{B}_{\omega}^{\mathscr{F}_n}$. In particular we prove that for any shift-continuous $T_1$-topology $\tau$ on the semigroup $\boldsymbol{B}_{\omega}^{\mathscr{F}_n}$ every non-zero element of $\boldsymbol{B}_{\omega}^{\mathscr{F}_n}$ is an isolated point of $(\boldsymbol{B}_{\omega}^{\mathscr{F}_n},\tau)$, $\boldsymbol{B}_{\omega}^{\mathscr{F}_n}$ admits the unique compact shift-continuous $T_1$-topology, and every $\omega_{\mathfrak{d}}$-compact shift-continuous $T_1$-topology is compact. We describe the closure of the semigroup $\boldsymbol{B}_{\omega}^{\mathscr{F}_n}$ in a Hausdorff semitopological semigroup and prove the criterium when a topological inverse semigroup $\boldsymbol{B}_{\omega}^{\mathscr{F}_n}$ is $H$-closed in the class of Hausdorff topological semigroups.

\section{Algebraic properties of the semigroup $\boldsymbol{B}_{\omega}^{\mathscr{F}_n}$}

An inverse semigroup $S$ with zero is said to be $0$-$E$-\emph{unitary} if $0\neq e\preccurlyeq s$, where $e$ is an idempotent in $S$, implies that $s$ is an idempotent \cite{Lawson-1998}. The class of $0$-$E$-unitary semigroups was first defined by Maria Szendrei \cite{Szendrei-1987},
although she called them $E^*$-unitary. The term $0$-$E$-unitary appears to be due
to Meakin and Sapir \cite{Meakin-Sapir-1993}.

In the following proposition we summarise properties which follow from properties of the semigroup $\boldsymbol{B}_{\omega}^{\mathscr{F}}$ in the general case. These properties are corollaries of the results of the paper \cite{Gutik-Mykhalenych=2020}.

\begin{proposition}\label{proposition-2.1}
For any $n\in\omega$  the following statements hold:
\begin{enumerate}
  \item\label{proposition-2.1(1)} $\boldsymbol{B}_{\omega}^{\mathscr{F}_n}$ is an inverse semigroup, namely $\boldsymbol{0}^{-1}=\boldsymbol{0}$ and $(i,j,[0;k])^{-1}=(j,i,[0;k])$, for any $i,j,k\in\omega$;
  \item\label{proposition-2.1(2)} $(i,j,[0;k])\in\boldsymbol{B}_{\omega}^{\mathscr{F}_n}$ is an idempotent if and only if $i=j$;
  \item\label{proposition-2.1(3)} $(i_1,i_1,[0;k_1])\preccurlyeq(i_2,i_2,[0;k_2])$ in $E(\boldsymbol{B}_{\omega}^{\mathscr{F}_n})$ if and only if $i_1\geqslant i_2$ and $i_1+k_1\leqslant i_2+k_2$ and this natural partial order on $E(\boldsymbol{B}_{\omega}^{\mathscr{F}_n})$ is presented on Fig.~\ref{fig-2.1};
\begin{figure}[h]
\vskip.1cm
\begin{center}
\tiny{
\begin{tikzpicture}[scale=.7]
\draw (0,-4) node {$\boldsymbol{0}$};
\draw (0,0) node {$(0,0,[0;0])$};
\draw[>=latex,->,thick] (0,-.3) -- (0,-3.7);
\draw (3,0) node {$(1,1,[0;0])$};
\draw[>=latex,->,thick] (3,-.3) -- (0.2,-3.7);
\draw (6,0) node {$(2,2,[0;0])$};
\draw[>=latex,->,thick] (6,-.3) -- (0.3,-3.75);
\draw (9,0) node {$(3,3,[0;0])$};
\draw[>=latex,->,thick] (9,-.3) -- (0.45,-3.8);
\draw (12,0) node {$(4,4,[0;0])$};
\draw[>=latex,->,thick] (12,-.3) -- (0.6,-3.85);
\draw (15,0) node {$\boldsymbol{\cdots}$};
\draw (10.,-1.5) node {$\boldsymbol{\cdots}$};
\draw (18,0) node {$(i,i,[0;0])$};
\draw[>=latex,->,thick] (18,-.3) -- (0.8,-3.9);
\draw (21,0) node {$(i{+}1,i{+}1,[0;0])$};
\draw[>=latex,->,thick] (21,-.3) -- (0.99,-3.97);
\draw (24,0) node {$\boldsymbol{\cdots}$};
\draw (18.,-1.5) node {$\boldsymbol{\cdots}$};
\draw (0,2) node {$(0,0,[0;1])$};
\draw[>=latex,->,thick] (0,1.7) -- (0,0.3);
\draw[>=latex,->,thick] (0.3,1.7) -- (2.7,0.3);
\draw (3,2) node {$(1,1,[0;1])$};
\draw[>=latex,->,thick] (3,1.7) -- (3,0.3);
\draw[>=latex,->,thick] (3.3,1.7) -- (5.7,0.3);
\draw (6,2) node {$(2,2,[0;1])$};
\draw[>=latex,->,thick] (6,1.7) -- (6,0.3);
\draw[>=latex,->,thick] (6.3,1.7) -- (8.7,0.3);
\draw (9,2) node {$(3,3,[0;1])$};
\draw[>=latex,->,thick] (9,1.7) -- (9,0.3);
\draw[>=latex,->,thick] (9.3,1.7) -- (11.7,0.3);
\draw (12,2) node {$(4,4,[0;1])$};
\draw[>=latex,->,thick] (12,1.7) -- (12,0.3);
\draw[>=latex,->,thick] (12.3,1.7) -- (13.6,0.9);
\draw (15,2) node {$\boldsymbol{\cdots}$};
\draw[>=latex,->,thick] (16.3,1.) -- (17.6,0.3);
\draw (18,2) node {$(i,i,[0;1])$};
\draw[>=latex,->,thick] (18,1.7) -- (18,0.3);
\draw[>=latex,->,thick] (18.3,1.7) -- (20.6,0.3);
\draw (21,2) node {$(i{+}1,i{+}1,[0;1])$};
\draw[>=latex,->,thick] (21,1.7) -- (21,0.3);
\draw[>=latex,->,thick] (21.3,1.7) -- (22.6,0.9);
\draw (24,2) node {$\boldsymbol{\cdots}$};
\draw (0,4) node {$(0,0,[0;2])$};
\draw[>=latex,->,thick] (0,3.7) -- (0,2.3);
\draw[>=latex,->,thick] (0.3,3.7) -- (2.7,2.3);
\draw (3,4) node {$(1,1,[0;2])$};
\draw[>=latex,->,thick] (3,3.7) -- (3,2.3);
\draw[>=latex,->,thick] (3.3,3.7) -- (5.7,2.3);
\draw (6,4) node {$(2,2,[0;2])$};
\draw[>=latex,->,thick] (6,3.7) -- (6,2.3);
\draw[>=latex,->,thick] (6.3,3.7) -- (8.7,2.3);
\draw (9,4) node {$(3,3,[0;2])$};
\draw[>=latex,->,thick] (9,3.7) -- (9,2.3);
\draw[>=latex,->,thick] (9.3,3.7) -- (11.7,2.3);
\draw (12,4) node {$(4,4,[0;2])$};
\draw[>=latex,->,thick] (12,3.7) -- (12,2.3);
\draw[>=latex,->,thick] (12.3,3.7) -- (13.6,2.9);
\draw (15,4) node {$\boldsymbol{\cdots}$};
\draw[>=latex,->,thick] (16.3,3.) -- (17.6,2.3);
\draw (18,4) node {$(i,i,[0;2])$};
\draw[>=latex,->,thick] (18,3.7) -- (18,2.3);
\draw[>=latex,->,thick] (18.3,3.7) -- (20.6,2.3);
\draw (21,4) node {$(i{+}1,i{+}1,[0;2])$};
\draw[>=latex,->,thick] (21,3.7) -- (21,2.3);
\draw[>=latex,->,thick] (21.3,3.7) -- (22.6,2.9);
\draw (24,4) node {$\boldsymbol{\cdots}$};
\draw (0,6) node {$(0,0,[0;3])$};
\draw[>=latex,->,thick] (0,5.7) -- (0,4.3);
\draw[>=latex,->,thick] (0.3,5.7) -- (2.7,4.3);
\draw (3,6) node {$(1,1,[0;3])$};
\draw[>=latex,->,thick] (3,5.7) -- (3,4.3);
\draw[>=latex,->,thick] (3.3,5.7) -- (5.7,4.3);
\draw (6,6) node {$(2,2,[0;3])$};
\draw[>=latex,->,thick] (6,5.7) -- (6,4.3);
\draw[>=latex,->,thick] (6.3,5.7) -- (8.7,4.3);
\draw (9,6) node {$(3,3,[0;3])$};
\draw[>=latex,->,thick] (9,5.7) -- (9,4.3);
\draw[>=latex,->,thick] (9.3,5.7) -- (11.7,4.3);
\draw (12,6) node {$(4,4,[0;3])$};
\draw[>=latex,->,thick] (12,5.7) -- (12,4.3);
\draw[>=latex,->,thick] (12.3,5.7) -- (13.6,4.9);
\draw (15,6) node {$\boldsymbol{\cdots}$};
\draw[>=latex,->,thick] (16.3,5.) -- (17.6,4.3);
\draw (18,6) node {$(i,i,[0;3])$};
\draw[>=latex,->,thick] (18,5.7) -- (18,4.3);
\draw[>=latex,->,thick] (18.3,5.7) -- (20.6,4.3);
\draw (21,6) node {$(i{+}1,i{+}1,[0;3])$};
\draw[>=latex,->,thick] (21,5.7) -- (21,4.3);
\draw[>=latex,->,thick] (21.3,5.7) -- (22.6,4.9);
\draw (24,6) node {$\boldsymbol{\cdots}$};
\draw (0,8) node {$\boldsymbol{\cdots}$};
\draw[>=latex,->,thick] (0,7.3) -- (0,6.3);
\draw[>=latex,->,thick] (1.,7.3) -- (2.7,6.3);
\draw (3,8) node {$\boldsymbol{\cdots}$};
\draw[>=latex,->,thick] (3,7.3) -- (3,6.3);
\draw[>=latex,->,thick] (4.,7.3) -- (5.7,6.3);
\draw (6,8) node {$\boldsymbol{\cdots}$};
\draw[>=latex,->,thick] (6,7.3) -- (6,6.3);
\draw[>=latex,->,thick] (7.,7.3) -- (8.7,6.3);
\draw (9,8) node {$\boldsymbol{\cdots}$};
\draw[>=latex,->,thick] (9,7.3) -- (9,6.3);
\draw[>=latex,->,thick] (10.,7.3) -- (11.7,6.3);
\draw (12,8) node {$\boldsymbol{\cdots}$};
\draw[>=latex,->,thick] (12,7.3) -- (12,6.3);
\draw[>=latex,->,thick] (13.,7.3) -- (13.7,6.9);
\draw (15,8) node {$\boldsymbol{\cdots}$};
\draw[>=latex,->,thick] (16.3,7.) -- (17.7,6.3);
\draw (18,8) node {$\boldsymbol{\cdots}$};
\draw[>=latex,->,thick] (18,7.3) -- (18,6.3);
\draw[>=latex,->,thick] (19.,7.3) -- (20.6,6.3);
\draw (21,8) node {$\boldsymbol{\cdots}$};
\draw[>=latex,->,thick] (21,7.3) -- (21,6.3);
\draw[>=latex,->,thick] (22.,7.3) -- (22.7,6.9);
\draw (24,8) node {$\boldsymbol{\cdots}$};
\draw (0,10) node {$(0,0,[0;n{-}1])$};
\draw[>=latex,->,thick] (0,9.7) -- (0,8.7);
\draw[>=latex,->,thick] (0.3,9.7) -- (2.0,8.7);
\draw (3,10) node {$(1,1,[0;n{-}1])$};
\draw[>=latex,->,thick] (3,9.7) -- (3,8.7);
\draw[>=latex,->,thick] (3.3,9.7) -- (5.0,8.7);
\draw (6,10) node {$(2,2,[0;n{-}1])$};
\draw[>=latex,->,thick] (6,9.7) -- (6,8.7);
\draw[>=latex,->,thick] (6.3,9.7) -- (8.0,8.7);
\draw (9,10) node {$(3,3,[0;n{-}1])$};
\draw[>=latex,->,thick] (9,9.7) -- (9,8.7);
\draw[>=latex,->,thick] (9.3,9.7) -- (11.0,8.7);
\draw (12,10) node {$(4,4,[0;n{-}1])$};
\draw[>=latex,->,thick] (12,9.7) -- (12,8.7);
\draw[>=latex,->,thick] (12.3,9.7) -- (13.6,8.9);
\draw (15,10) node {$\boldsymbol{\cdots}$};
\draw[>=latex,->,thick] (16.3,9.4) -- (17.,9);
\draw (18,10) node {$(i{,}i{,}[0;n{-}1\!])$};
\draw[>=latex,->,thick] (18,9.7) -- (18,8.7);
\draw[>=latex,->,thick] (18.3,9.7) -- (19.6,8.9);
\draw (21,10) node {$(i{+}1{,}i{+}1{,}[0;n{-}1\!])$};
\draw[>=latex,->,thick] (21,9.7) -- (21,8.7);
\draw[>=latex,->,thick] (21.3,9.7) -- (22.6,8.9);
\draw (24,10) node {$\boldsymbol{\cdots}$};
\draw (0,12) node {$(0,0,[0;n])$};
\draw[>=latex,->,thick] (0,11.7) -- (0,10.3);
\draw[>=latex,->,thick] (0.3,11.7) -- (2.7,10.3);
\draw (3,12) node {$(1,1,[0;n])$};
\draw[>=latex,->,thick] (3,11.7) -- (3,10.3);
\draw[>=latex,->,thick] (3.3,11.7) -- (5.7,10.3);
\draw (6,12) node {$(2,2,[0;n])$};
\draw[>=latex,->,thick] (6,11.7) -- (6,10.3);
\draw[>=latex,->,thick] (6.3,11.7) -- (8.7,10.3);
\draw (9,12) node {$(3,3,[0;n])$};
\draw[>=latex,->,thick] (9,11.7) -- (9,10.3);
\draw[>=latex,->,thick] (9.3,11.7) -- (11.7,10.3);
\draw (12,12) node {$(4,4,[0;n])$};
\draw[>=latex,->,thick] (12,11.7) -- (12,10.3);
\draw[>=latex,->,thick] (12.3,11.7) -- (13.6,10.9);
\draw (15,12) node {$\boldsymbol{\cdots}$};
\draw[>=latex,->,thick] (16.3,11.) -- (17.6,10.3);
\draw (18,12) node {$(i,i,[0;n])$};
\draw[>=latex,->,thick] (18,11.7) -- (18,10.3);
\draw[>=latex,->,thick] (18.3,11.7) -- (20.6,10.3);
\draw (21,12) node {$(i{+}1,i{+}1,[0;n])$};
\draw[>=latex,->,thick] (21,11.7) -- (21,10.3);
\draw[>=latex,->,thick] (21.3,11.7) -- (22.6,10.9);
\draw (24,12) node {$\boldsymbol{\cdots}$};
\end{tikzpicture}
}
\end{center}
\vskip.1cm
\caption{The natural partial order on the band $E(\boldsymbol{B}_{\omega}^{\mathscr{F}_n})$}\label{fig-2.1}
\end{figure}
  \item\label{proposition-2.1(4)} $(i,i,[0;n])$ is a maximal idempotent of $E(\boldsymbol{B}_{\omega}^{\mathscr{F}_n})$ for any $i\in\omega$;
  \item\label{proposition-2.1(5)} $(i,i,[0;0])$ is a primitive idempotent of $E(\boldsymbol{B}_{\omega}^{\mathscr{F}_n})$ for any $i\in\omega$;
  \item\label{proposition-2.1(6)} $(i_1,j_1,[0;k_1])\mathscr{R}(i_2,j_2,[0;k_2])$ in $\boldsymbol{B}_{\omega}^{\mathscr{F}_n}$ if and only if $i_1=i_2$ and $k_1=k_2$;
  \item\label{proposition-2.1(7)} $(i_1,j_1,[0;k_1])\mathscr{L}(i_2,j_2,[0;k_2])$ in $\boldsymbol{B}_{\omega}^{\mathscr{F}_n}$ if and only if $j_1=j_2$ and $k_1=k_2$;
  \item\label{proposition-2.1(8)} $(i_1,j_1,[0;k_1])\mathscr{H}(i_2,j_2,[0;k_2])$ in $\boldsymbol{B}_{\omega}^{\mathscr{F}_n}$ if and only if $i_1=i_2$, $j_1=j_2$ i $k_1=k_2$;
  \item\label{proposition-2.1(9)} $(i_1,j_1,[0;k_1])\mathscr{D}(i_2,j_2,[0;k_2])$ in $\boldsymbol{B}_{\omega}^{\mathscr{F}_n}$ if and only if $k_1=k_2$;
  \item\label{proposition-2.1(10)} $\mathscr{D}=\mathscr{J}$ in $\boldsymbol{B}_{\omega}^{\mathscr{F}_n}$;
  \item\label{proposition-2.1(11)} $(i_1,j_1,[0;k_1])\preccurlyeq(i_2,j_2,[0;k_2])$ in $\boldsymbol{B}_{\omega}^{\mathscr{F}_n}$ if and only if $i_1\geqslant i_2$, $i_1-j_1=i_2-j_2$ and $i_1+k_1\leqslant i_2+k_2$;
  \item\label{proposition-2.1(12)} $\boldsymbol{B}_{\omega}^{\mathscr{F}_n}$ is a $0$-$E$-unitary inverse semigroup.
\end{enumerate}
\end{proposition}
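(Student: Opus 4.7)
The plan is to derive all twelve items from the general structure theory of $\boldsymbol{B}_{\omega}^{\mathscr{F}}$ developed in \cite{Gutik-Mykhalenych=2020}, specialised to the family $\mathscr{F}_n$. First I would record that $\mathscr{F}_n$ together with $\varnothing$ is $\omega$-closed, since any intersection $[0;k_1]\cap(-m+[0;k_2])$ is either the initial interval $[0;\min\{k_1,k_2-m\}]$ or empty, so the general construction yields a well-defined inverse semigroup with zero. Items~(1), (2), (6)--(9), and (11) are then direct transcriptions of the corresponding statements of \cite{Gutik-Mykhalenych=2020} after inserting $F_i=[0;k_i]$: the inversion formula and idempotent criterion, the descriptions of $\mathscr{R}$, $\mathscr{L}$, $\mathscr{H}$ and $\mathscr{D}$, and the natural partial order on $\boldsymbol{B}_{\omega}^{\mathscr{F}_n}$.

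Items~(3)--(5) reduce to short calculations in $E(\boldsymbol{B}_{\omega}^{\mathscr{F}_n})$. The general idempotent comparison reads $(i_1,i_1,F_1)\preccurlyeq(i_2,i_2,F_2)$ iff $i_1\geqslant i_2$ and $(i_1-i_2)+F_1\subseteq F_2$; for $F_j=[0;k_j]$ this collapses to $i_1\geqslant i_2$ and $i_1+k_1\leqslant i_2+k_2$, from which the Hasse diagram in Fig.~\ref{fig-2.1} follows. For~(4), any idempotent above $(i,i,[0;n])$ must have third coordinate $[0;k']$ with $k'\geqslant n$, forcing $k'=n$ and $i'=i$; for~(5), a non-zero idempotent below $(i,i,[0;0])$ satisfies $i'\geqslant i$ and $i'+k'\leqslant i$, which forces $i'=i$ and $k'=0$.

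The main non-routine point, which I expect to be the principal obstacle, is~(10): the equality $\mathscr{D}=\mathscr{J}$. Since $\mathscr{D}\subseteq\mathscr{J}$ always holds, it suffices to show that elements with different third coordinates cannot be $\mathscr{J}$-related. Reading off the multiplication formula, the third component of any product $(i_1,j_1,F_1)\cdot(i_2,j_2,F_2)$ is either $(j_1-i_2+F_1)\cap F_2$ or $F_1\cap(i_2-j_1+F_2)$, and in either case is contained in a translate of $F_1$. Iterating, for every $a=(i_1,j_1,[0;k_1])$ and every admissible product $x\cdot a\cdot y$ the third coordinate has cardinality at most $k_1+1$. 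Consequently, if $(i_2,j_2,[0;k_2])$ lies in the principal two-sided ideal generated by $a$, then $k_2\leqslant k_1$; by symmetry $k_1=k_2$, and~(9) then yields $a\mathscr{D}(i_2,j_2,[0;k_2])$.

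Finally, item~(12) is immediate from~(11): if $(i_1,j_1,[0;k_1])$ is a non-zero idempotent lying below $(i_2,j_2,[0;k_2])$, then $i_1=j_1$, which combined with $i_1-j_1=i_2-j_2$ forces $i_2=j_2$; hence $(i_2,j_2,[0;k_2])$ is an idempotent as well, and so $\boldsymbol{B}_{\omega}^{\mathscr{F}_n}$ is $0$-$E$-unitary.
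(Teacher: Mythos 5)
Your proposal is correct, and for most items ((1)--(9), (11), (12)) it follows essentially the paper's route: items (1)--(5) by direct computation, items (6)--(8) and (11) by specialising the general results of \cite{Gutik-Mykhalenych=2020} to $F=[0;k]$, item (9) from the descriptions of $\mathscr{R}$ and $\mathscr{L}$, and item (12) as an immediate consequence of (11) exactly as in the paper. The genuine divergence is in item (10), which you correctly identify as the only non-routine point. The paper proves $\mathscr{D}=\mathscr{J}$ structurally: it observes via (3) that any two distinct idempotents in a fixed non-zero $\mathscr{D}$-class $\{(i,j,[0;k_0])\colon i,j\in\omega\}$ are $\preccurlyeq$-incomparable, and then invokes Proposition~3.2.17 of \cite{Lawson-1998} (a $\mathscr{D}$-class containing a minimal idempotent coincides with its $\mathscr{J}$-class). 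You instead give a direct combinatorial argument: the third coordinate of any product $(i_1,j_1,F_1)\cdot(i_2,j_2,F_2)$ is contained in a translate of $F_1$ (and of $F_2$), so the cardinality of the third coordinate cannot increase along products; hence membership of $b$ in the principal ideal $S^1aS^1$ forces $k_b\leqslant k_a$, and $\mathscr{J}$-equivalence forces $k_a=k_b$, which by (9) gives $\mathscr{D}$-equivalence. Your argument is more elementary and self-contained (it needs no external result beyond the multiplication formula and the standard inclusion $\mathscr{D}\subseteq\mathscr{J}$), at the cost of a small bookkeeping step for the adjoined identity in $S^1$ and for the zero class $\{\boldsymbol{0}\}$; the paper's argument is shorter on the page but leans on general inverse-semigroup theory. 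Both are valid proofs of the same statement.
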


\begin{proof}
Statements \eqref{proposition-2.1(1)}--\eqref{proposition-2.1(5)} are trivial. Statements \eqref{proposition-2.1(6)}--\eqref{proposition-2.1(8)} follow from Proposition~3.2.11 of \cite{Lawson-1998} and corresponding statements of Theorem 2 in \cite{Gutik-Mykhalenych=2020}.

\smallskip

\eqref{proposition-2.1(9)} $(\Rightarrow)$ Suppose that $(i_1,j_1,[0;k_1])\mathscr{D}(i_2,j_2,[0;k_2])$ in $\boldsymbol{B}_{\omega}^{\mathscr{F}_n}$.
Then there exists $(i_0,j_0,[0;k_0])\in\boldsymbol{B}_{\omega}^{\mathscr{F}_n}$ such that $(i_1,j_1,[0;k_1])\mathscr{L}(i_0,j_0,[0;k_0])$ and $(i_0,j_0,[0;k_0])\mathscr{R}(i_2,j_2,[0;k_2])$. By statement \eqref{proposition-2.1(6)} we have that $i_0=i_2$ and $k_0=k_2$, and by \eqref{proposition-2.1(7)} we get that $j_0=j_1$ and $k_1=k_0$. This implies that  $k_1=k_2$.

\smallskip

$(\Leftarrow)$
Let $(i_1,j_1,[0;k])$ and $(i_2,j_2,[0;k])$ be elements of the semigroup $\boldsymbol{B}_{\omega}^{\mathscr{F}_n}$. By statements \eqref{proposition-2.1(6)} and \eqref{proposition-2.1(7)} we have that $(i_1,j_1,[0;k])\mathscr{L}(i_1,j_2,[0;k])\mathscr{R}(i_2,j_2,[0;k])$ and hence $(i_1,j_1,[0;k])\mathscr{D}((i_2,j_2,[0;k])$ in $\boldsymbol{B}_{\omega}^{\mathscr{F}_n}$.

\smallskip

\eqref{proposition-2.1(10)} It is obvious that the $\mathscr{D}$-class of the zero $\boldsymbol{0}$  coincides with $\{\boldsymbol{0}\}$. Also the $\mathscr{J}$-class of the zero $\boldsymbol{0}$  coincides with $\{\boldsymbol{0}\}$.

Fix an arbitrary non-zero element $(i_0,j_0,[0;k_0])$ of $\boldsymbol{B}_{\omega}^{\mathscr{F}_n}$. By \eqref{proposition-2.1(9)} the $\mathscr{D}$-class of $(i_0,j_0,[0;k_0])$ is the following set $\mathbf{D}=\{(i,j,[0;k_0])\colon i,j\in\omega\}$. By \eqref{proposition-2.1(3)} every two distinct idempotents of the set $\mathbf{D}$ are incomparable, and hence every idempotent of the  $\mathscr{D}$-class of $(i_0,j_0,[0;k_0])$  is minimal with the respect to the natural partial order on $\boldsymbol{B}_{\omega}^{\mathscr{F}_n}$. By Proposition 3.2.17 from \cite{Lawson-1998}  if the $\mathscr{D}$-class
$D_y$ has a minimal element then $D_y=J_y$ and hence the $\mathscr{D}$-class of $(i_0,j_0,[0;k_0])$ coincides with its $\mathscr{J}$-class. Therefore we obtain that $\mathscr{D}=\mathscr{J}$ in $\boldsymbol{B}_{\omega}^{\mathscr{F}_n}$.

\smallskip

\eqref{proposition-2.1(11)} By Proposition~2 of \cite{Gutik-Mykhalenych=2020} the inequality $(i_1,j_1,[0;k_1])\preccurlyeq(i_2,j_2,[0;k_2])$ is equivalent to the conditions
\begin{equation*}
  [0;k_1]\subseteq i_2-i_1+[0;k_2]=j_2-j_1+[0;k_2],
\end{equation*}
which are equivalent to
\begin{equation*}
  i_2-i_1=j_2-j_1\leqslant 0 \qquad \hbox{and} \qquad k_1\leqslant i_2-i_1+k_2.
\end{equation*}
It is obvious that the last conditions are equivalent to
\begin{equation*}
  i_1\geqslant i_2, \qquad i_1-j_1=i_2-j_2 \qquad \hbox{and} \qquad i_1+k_1\leqslant i_2+k_2,
\end{equation*}
which completes the proof of the statement.

\smallskip

Statement \eqref{proposition-2.1(12)} follows from \eqref{proposition-2.1(11)}.
\end{proof}

\begin{lemma}\label{lemma-2.2}
Let $n\in\omega$. Then ${\uparrow_{\preccurlyeq}}(i_0,j_0,[0;k_0])$ and ${\downarrow_{\preccurlyeq}}(i_0,j_0,[0;k_0])$ are finite subsets of the semigroup $\boldsymbol{B}_{\omega}^{\mathscr{F}_n}$ for any its non-zero element $(i_0,j_0,[0;k_0])$, $i_0,j_0\in\omega$, $k_0\in\{0,\ldots,n\}$.
\end{lemma}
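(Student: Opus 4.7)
The plan is to reduce everything to the arithmetic characterisation of the natural partial order given in Proposition~\ref{proposition-2.1}\eqref{proposition-2.1(11)}. Once the conditions $i_1\geqslant i_2$, $i_1-j_1=i_2-j_2$ and $i_1+k_1\leqslant i_2+k_2$ are fixed, each coordinate is squeezed between explicit bounds depending only on $(i_0,j_0,k_0)$ and $n$, so both sets are obviously finite by counting. There is no real obstacle; the only thing to be careful about is that all coordinates lie in $\omega$ and that $k$ lies in $\{0,\ldots,n\}$.

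For ${\uparrow_{\preccurlyeq}}(i_0,j_0,[0;k_0])$ I would argue as follows. If $(i_0,j_0,[0;k_0])\preccurlyeq(i_2,j_2,[0;k_2])$, then by Proposition~\ref{proposition-2.1}\eqref{proposition-2.1(11)} we have $i_2\leqslant i_0$, so $i_2\in\{0,1,\ldots,i_0\}$; the equality $i_2-j_2=i_0-j_0$ then determines $j_2=j_0-(i_0-i_2)$ uniquely from $i_2$; and $k_2$ lies in the finite set $\{0,\ldots,n\}$. Thus the upper set has at most $(i_0+1)(n+1)$ elements.

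For ${\downarrow_{\preccurlyeq}}(i_0,j_0,[0;k_0])$ the same proposition gives $i_1\geqslant i_0$ and $i_1+k_1\leqslant i_0+k_0$. Combined with $k_1\geqslant 0$, these force $i_0\leqslant i_1\leqslant i_0+k_0$, so $i_1$ takes only finitely many (at most $k_0+1$) values. Once $i_1$ is chosen, $j_1=j_0+(i_1-i_0)$ is determined by $i_1-j_1=i_0-j_0$, and $k_1$ ranges over the finite set $\{0,\ldots,i_0+k_0-i_1\}$. Hence ${\downarrow_{\preccurlyeq}}(i_0,j_0,[0;k_0])$ contains at most $\sum_{i_1=i_0}^{i_0+k_0}(i_0+k_0-i_1+1)=\binom{k_0+2}{2}$ elements, which is finite.
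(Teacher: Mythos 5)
Your argument is correct and is essentially the paper's own proof: both reduce the claim to the arithmetic description of $\preccurlyeq$ in Proposition~\ref{proposition-2.1}\eqref{proposition-2.1(11)}, and you merely make the resulting counting explicit. (The only cosmetic point is that ${\downarrow_{\preccurlyeq}}(i_0,j_0,[0;k_0])$ also contains the zero $\boldsymbol{0}$, so your count should be $\binom{k_0+2}{2}+1$; this does not affect finiteness.)
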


\begin{proof}
By Proposition~\ref{proposition-2.1}\eqref{proposition-2.1(11)} there exist finitely many $i,j\in\omega$ and $k\in\{0,\ldots,n\}$ such that $(i,j,[0;k])\preccurlyeq (i_0,j_0,[0;k_0])$ for some $i,j\in\omega$ and hence the set ${\downarrow_{\preccurlyeq}}(i_0,j_0,[0;k_0])$ is finite.

The inequality $k\leqslant n$ and Proposition~\ref{proposition-2.1}\eqref{proposition-2.1(11)} imply that there exist finitely many $i,j\in\omega$ and $k\in\{0,\ldots,n\}$ such that $(i_0,j_0,[0;k_0])\preccurlyeq(i,j,[0;k])$, and hence the set ${\uparrow_{\preccurlyeq}}(i_0,j_0,[0;k_0])$ is finite, too.
\end{proof}

\begin{lemma}\label{lemma-2.3}
If $n\in\omega$ then for any $\alpha,\beta\in \boldsymbol{B}_{\omega}^{\mathscr{F}_n}$ the set $\alpha\cdot \boldsymbol{B}_{\omega}^{\mathscr{F}_n}\cdot\beta$ is finite.
\end{lemma}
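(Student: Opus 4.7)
The plan is to combine Lemma~\ref{lemma-2.2} with the combinatoriality of $\boldsymbol{B}_{\omega}^{\mathscr{F}_n}$ (i.e. the triviality of $\mathscr{H}$-classes established in Proposition~\ref{proposition-2.1}\eqref{proposition-2.1(8)}), reducing the finiteness of $\alpha\cdot\boldsymbol{B}_{\omega}^{\mathscr{F}_n}\cdot\beta$ to a pair of downward-bounded idempotent conditions.

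First I would dispose of the case $\alpha=\boldsymbol{0}$ or $\beta=\boldsymbol{0}$, where $\alpha\cdot\boldsymbol{B}_{\omega}^{\mathscr{F}_n}\cdot\beta=\{\boldsymbol{0}\}$ is trivially finite. Assume from now on that both $\alpha$ and $\beta$ are non-zero. Pick an arbitrary $\gamma\in\boldsymbol{B}_{\omega}^{\mathscr{F}_n}$ and set $x:=\alpha\gamma\beta$. Using the inverse-semigroup identity $(ab)^{-1}=b^{-1}a^{-1}$ from Proposition~\ref{proposition-2.1}\eqref{proposition-2.1(1)}, I would rewrite
\[
xx^{-1}=\alpha\bigl(\gamma\beta\beta^{-1}\gamma^{-1}\bigr)\alpha^{-1}
\qquad\text{and}\qquad
x^{-1}x=\beta^{-1}\bigl(\gamma^{-1}\alpha^{-1}\alpha\gamma\bigr)\beta.
\]
The key general fact I would then invoke is that in any inverse semigroup, for every element $a$ and every idempotent $e$, the element $aea^{-1}$ is again an idempotent and satisfies $aea^{-1}\preccurlyeq aa^{-1}$; this is a one-line check using $aa^{-1}a=a$ and $a^{-1}aa^{-1}=a^{-1}$ (together with the fact that idempotents commute). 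Applying this to the displays above gives $xx^{-1}\preccurlyeq\alpha\alpha^{-1}$ and $x^{-1}x\preccurlyeq\beta^{-1}\beta$.

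Since $\alpha\alpha^{-1}$ and $\beta^{-1}\beta$ are non-zero idempotents, Lemma~\ref{lemma-2.2} forces the order ideals ${\downarrow_{\preccurlyeq}}(\alpha\alpha^{-1})$ and ${\downarrow_{\preccurlyeq}}(\beta^{-1}\beta)$ to be finite. Hence, as $\gamma$ varies over $\boldsymbol{B}_{\omega}^{\mathscr{F}_n}$, the idempotent $xx^{-1}$ ranges over a finite set and likewise $x^{-1}x$. Finally, by Proposition~\ref{proposition-2.1}\eqref{proposition-2.1(8)} every $\mathscr{H}$-class of $\boldsymbol{B}_{\omega}^{\mathscr{F}_n}$ is a singleton, so the assignment $x\mapsto(xx^{-1},x^{-1}x)$ is injective on $\boldsymbol{B}_{\omega}^{\mathscr{F}_n}$; therefore $\alpha\cdot\boldsymbol{B}_{\omega}^{\mathscr{F}_n}\cdot\beta$ is finite.

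I do not anticipate any serious obstacle. The only mildly technical step is the universal idempotent inequality $aea^{-1}\preccurlyeq aa^{-1}$, which is a standard one-line calculation and does not rely on the particular structure of $\boldsymbol{B}_{\omega}^{\mathscr{F}_n}$; everything else is a transparent combination of Lemma~\ref{lemma-2.2} with the combinatorial structure already recorded in Proposition~\ref{proposition-2.1}.
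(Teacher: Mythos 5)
Your proof is correct, and it takes a genuinely different route from the paper's. The paper argues directly from the multiplication formula: for $\alpha=(i_\alpha,j_\alpha,[0;k_\alpha])$ and $\beta=(i_\beta,j_\beta,[0;k_\beta])$ it checks that $\alpha\cdot(i,j,[0;k])=\boldsymbol{0}$ once $i\geqslant j_\alpha+n+1$ and $(i,j,[0;k])\cdot\beta=\boldsymbol{0}$ once $j\geqslant i_\beta+n+1$, so only finitely many middle factors survive. You instead work abstractly: the identities $xx^{-1}=\alpha\bigl(\gamma\beta\beta^{-1}\gamma^{-1}\bigr)\alpha^{-1}$ and $x^{-1}x=\beta^{-1}\bigl(\gamma^{-1}\alpha^{-1}\alpha\gamma\bigr)\beta$ together with the standard fact that $aea^{-1}$ is an idempotent $\preccurlyeq aa^{-1}$ pin $xx^{-1}$ and $x^{-1}x$ into the finite order ideals ${\downarrow_{\preccurlyeq}}(\alpha\alpha^{-1})$ and ${\downarrow_{\preccurlyeq}}(\beta^{-1}\beta)$ supplied by Lemma~\ref{lemma-2.2}, and triviality of the $\mathscr{H}$-classes (which in an inverse semigroup means $x\mapsto(xx^{-1},x^{-1}x)$ is injective, via Proposition~3.2.11 of \cite{Lawson-1998}) finishes the count. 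All the ingredients check out, including the edge cases: $\alpha\alpha^{-1}$ and $\beta^{-1}\beta$ are non-zero when $\alpha,\beta$ are, so Lemma~\ref{lemma-2.2} applies, and the $\mathscr{H}$-class of $\boldsymbol{0}$ is a singleton anyway. What your approach buys is generality and a clean quantitative bound $\bigl|\alpha\cdot\boldsymbol{B}_{\omega}^{\mathscr{F}_n}\cdot\beta\bigr|\leqslant\bigl|{\downarrow_{\preccurlyeq}}(\alpha\alpha^{-1})\bigr|\cdot\bigl|{\downarrow_{\preccurlyeq}}(\beta^{-1}\beta)\bigr|$: the same argument shows that in any combinatorial inverse semigroup whose non-zero idempotents have finite principal order ideals, every set $aSb$ is finite. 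What the paper's computation buys is self-containedness at this point in the exposition --- it uses only the definition of the operation, whereas your argument leans on Lemma~\ref{lemma-2.2} and on the combinatoriality recorded in Proposition~\ref{proposition-2.1}\eqref{proposition-2.1(8)}.
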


\begin{proof}
The statement of the lemma is trivial when $\alpha=\boldsymbol{0}$ or $\beta=\boldsymbol{0}$.

Fix  arbitrary non-zero-elements $\alpha=(i_\alpha,j_\alpha,[0;k_\alpha])$ and $\beta=(i_\beta,j_\beta,[0;k_\beta])$ of $\boldsymbol{B}_{\omega}^{\mathscr{F}_n}$. If $i\geqslant j_\alpha+n+1$ or $j\geqslant  i_\beta+n+1$ then for any $k\in\{0,\ldots,n\}$ we have that
\begin{equation*}
  (i_\alpha,j_\alpha,[0;k_\alpha])\cdot (i,j,[0;k])=(i_\alpha-j_\alpha+i,j,(j_\alpha-i+[0;k_\alpha])\cap[0;k])=\boldsymbol{0}
\end{equation*}
and
\begin{equation*}
  (i,j,[0;k])\cdot(i_\beta,j_\beta,[0;k_\beta])=(i,j-i_\beta+j_\beta,[0;k]\cap (i_\beta-j+[0;k_\beta]))=\boldsymbol{0}.
\end{equation*}
Hence there exist only finitely many $(i,j,[0;k])\in \boldsymbol{B}_{\omega}^{\mathscr{F}_n}$ such that $\alpha\cdot (i,j,[0;k])\cdot\beta\neq \boldsymbol{0}$.
This implies the statement of the lemma.
\end{proof}

\begin{lemma}\label{lemma-2.4}
Let $n\in\omega$. Then for any non-zero elements $(i_1,j_1,[0;k_1])$ and $(i_2,j_2,[0;k_2])$ of $\boldsymbol{B}_{\omega}^{\mathscr{F}_n}$ the sets of solutions of the following equations
\begin{equation*}
  (i_1,j_1,[0;k_1])\cdot\chi=(i_2,j_2,[0;k_2]) \qquad \hbox{and} \qquad \chi\cdot(i_1,j_1,[0;k_1])=(i_2,j_2,[0;k_2])
\end{equation*}
in the semigroup $\boldsymbol{B}_{\omega}^{\mathscr{F}_n}$ are finite.
\end{lemma}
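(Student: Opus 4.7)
The plan is to reduce everything to a direct analysis of the multiplication formula of $\boldsymbol{B}_{\omega}^{\mathscr{F}_n}$, exploiting the key fact that the third coordinate of any non-zero element must lie in the finite set $\{[0;0],[0;1],\ldots,[0;n]\}$. Since the right-hand side $(i_2,j_2,[0;k_2])$ is non-zero, any solution $\chi$ must also be non-zero (the zero is absorbing), so I may write $\chi=(i,j,[0;k])$ with $i,j\in\omega$ and $k\in\{0,1,\ldots,n\}$.

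For the first equation $(i_1,j_1,[0;k_1])\cdot\chi=(i_2,j_2,[0;k_2])$, I would split according to the two branches of the multiplication formula. In the branch $j_1\leqslant i$, the first two coordinates of the product are $(i_1-j_1+i,j)$, so matching with $(i_2,j_2)$ pins down $i=i_2-i_1+j_1$ and $j=j_2$ uniquely; only $k$ is free, and it ranges in the finite set $\{0,1,\ldots,n\}$, giving at most $n+1$ candidates. In the branch $j_1\geqslant i$, the product has first two coordinates $(i_1,j_1-i+j)$, which forces $i_1=i_2$ and $j=j_2-j_1+i$; here $i$ is constrained by $0\leqslant i\leqslant j_1$, so $i$ lives in a finite set of size $j_1+1$, and again $k\in\{0,\ldots,n\}$. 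Hence at most $(j_1+1)(n+1)$ candidates in this branch, and the total number of solutions is finite.

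For the second equation $\chi\cdot(i_1,j_1,[0;k_1])=(i_2,j_2,[0;k_2])$, rather than repeating the case analysis I would apply the inversion map, which by Proposition~\ref{proposition-2.1}\eqref{proposition-2.1(1)} is an involutory bijection of $\boldsymbol{B}_{\omega}^{\mathscr{F}_n}$. Indeed, $\chi\cdot\alpha=\beta$ if and only if $\alpha^{-1}\cdot\chi^{-1}=\beta^{-1}$, so the set of solutions of the second equation is in bijection with the set of solutions of an equation of the first type, with $\alpha^{-1}=(j_1,i_1,[0;k_1])$ and $\beta^{-1}=(j_2,i_2,[0;k_2])$ in the roles of $\alpha$ and $\beta$. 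Finiteness then follows from the previous paragraph.

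The only step that requires care is the case $j_1\geqslant i$ in the first equation: unlike the other branch, $i$ is \emph{not} forced to a single value, and one must observe explicitly that its range is contained in the finite interval $[0;j_1]$ in $\omega$. Everything else is a bookkeeping exercise on the product formula, so I expect this case to be the only genuine obstacle; the intersection condition on the third coordinate need not even be solved, since bounding the number of triples $(i,j,k)$ is already enough.
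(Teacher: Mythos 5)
Your proof is correct and follows essentially the same route as the paper: a direct case analysis of the multiplication formula for the first equation, with the observation that the third coordinate is confined to the finite set $\{[0;0],\ldots,[0;n]\}$. The only difference is that for the second equation the paper simply invokes a symmetric case analysis, whereas you reduce it to the first via the inversion $\chi\cdot\alpha=\beta\Leftrightarrow\alpha^{-1}\cdot\chi^{-1}=\beta^{-1}$, which is a legitimate (and slightly cleaner) shortcut since $\boldsymbol{B}_{\omega}^{\mathscr{F}_n}$ is inverse by Proposition~\ref{proposition-2.1}\eqref{proposition-2.1(1)}.
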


\begin{proof}
Suppose that $\chi$ is a solution of the equation  $(i_1,j_1,[0;k_1])\cdot\chi=(i_2,j_2,[0;k_2])$. The definition of the semigroup operation on the semigroup $\boldsymbol{B}_{\omega}^{\mathscr{F}_n}$ implies that $\chi\neq\boldsymbol{0}$ and $k_1\geqslant k_2$. Assume that $\chi=(i,j,[0;k])$ for some $i,j\in\omega$, $k=0,1,\ldots,n$. Then we have that
\begin{align*}
  (i_2,j_2,[0;k_2])&=(i_1,j_1,[0;k_1])\cdot(i,j,[0;k])= \\
   &=
   \left\{
     \begin{array}{ll}
       (i_1-j_1+i,j,(j_1-i+[0;k_1])\cap[0;k]), & \hbox{if~} j_1<i;\\
       (i_1,j,[0;k_1]\cap[0;k]),               & \hbox{if~} j_1=i;\\
       (i_1,j_1-i+j,[0;k_1]\cap(i-j_1+[0;k])), & \hbox{if~} j_1>i.
     \end{array}
   \right.
\end{align*}
We consider the following cases.
\begin{enumerate}
  \item If $j_1<i$ then $i=i_2-i_1+j_1$, $j=j_2$, $k\geqslant k_2$ and
 \begin{equation*}
 j_1-i+k_1=j_1-i_2+i_1-j_1+k_1=i_1-i_2+k_1\geqslant k.
 \end{equation*}
  \item If $j_1=i$ then $j=j_2$ and $k\geqslant k_2$.
  \item If $j_1>i$ then $i=i_2$, $j=j_2-j_1+i=j_2-j_1+i_2$ and $i-j_1+k=i_2-j_1+k\geqslant k_2$.
\end{enumerate}
Since $k\leqslant n$ the above considered cases imply that the  equation  $(i_1,j_1,[0;k_1])\cdot\chi=(i_2,j_2,[0;k_2])$ has finitely many solutions.

The proof of the statement that the  equation  $\chi\cdot(i_1,j_1,[0;k_1])=(i_2,j_2,[0;k_2])$ has finitely many solutions is similar.
\end{proof}

\begin{theorem}\label{theorem-2.5}
For an arbitrary $n\in\omega$ the semigroup $\boldsymbol{B}_{\omega}^{\mathscr{F}_n}$ is isomorphic to an inverse subsemigroup  of $\mathscr{I}_\omega^{n+1}$,
namely $\boldsymbol{B}_{\omega}^{\mathscr{F}_n}$ is isomorphic to the semigroup $\mathscr{I}_\omega^{n+1}(\overrightarrow{\mathrm{conv}})$.
\end{theorem}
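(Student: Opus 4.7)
The plan is to construct an explicit isomorphism $\Phi\colon \boldsymbol{B}_{\omega}^{\mathscr{F}_n}\to\mathscr{I}_\omega^{n+1}(\overrightarrow{\mathrm{conv}})$ and verify the four standard properties (well-definedness, injectivity, surjectivity, homomorphism). Define $\Phi(\boldsymbol{0})=\boldsymbol{0}$ and, for $i,j\in\omega$ and $k\in\{0,\ldots,n\}$,
\begin{equation*}
\Phi(i,j,[0;k])=\left(
\begin{smallmatrix}
 i & i{+}1 & \cdots & i{+}k \\
 j & j{+}1 & \cdots & j{+}k \\
\end{smallmatrix}
\right),
\end{equation*}
that is, the partial map with $\operatorname{dom}=i+[0;k]$ and $\operatorname{ran}=j+[0;k]$ sending $i+m\mapsto j+m$. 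Both the domain and the range are order-convex subsets of $\omega$, the map is an order isomorphism between them, and its rank is $k+1\leqslant n+1$, so $\Phi(i,j,[0;k])\in \mathscr{I}_\omega^{n+1}(\overrightarrow{\mathrm{conv}})$; well-definedness is immediate.

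Injectivity follows because $\Phi(i,j,[0;k])$ determines $i=\min\operatorname{dom}$, $j=\min\operatorname{ran}$, and $k=|\operatorname{dom}|-1$. For surjectivity, take any non-zero $\alpha\in\mathscr{I}_\omega^{n+1}(\overrightarrow{\mathrm{conv}})$. Its domain and range are non-empty order-convex subsets of $\omega$ of the same finite cardinality $r\leqslant n+1$, hence of the form $\{a,a+1,\ldots,a+r-1\}$ and $\{b,b+1,\ldots,b+r-1\}$; since $\alpha$ is an order isomorphism, $\alpha(a+m)=b+m$ for $m=0,\ldots,r-1$, so $\alpha=\Phi(a,b,[0;r-1])$.

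The main work is verifying that $\Phi$ is a homomorphism, which I would do by a direct case split matching the definition of the semigroup operation on $\boldsymbol{B}_{\omega}^{\mathscr{F}_n}$. Fix non-zero $(i_1,j_1,[0;k_1])$ and $(i_2,j_2,[0;k_2])$ and compute the composition $\Phi(i_1,j_1,[0;k_1])\cdot\Phi(i_2,j_2,[0;k_2])$ in $\mathscr{I}_\omega$: its domain consists of those $i_1+m$ with $m\in[0;k_1]$ for which $j_1+m\in i_2+[0;k_2]$, i.e.\ $m\in[0;k_1]\cap(i_2-j_1+[0;k_2])$, and on such $m$ the composition sends $i_1+m\mapsto j_2+(j_1+m-i_2)$. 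In the case $j_1\leqslant i_2$, substituting $m'=m-(i_2-j_1)\geqslant 0$ rewrites the domain as $(i_1-j_1+i_2)+\bigl([0;k_1-i_2+j_1]\cap[0;k_2]\bigr)$ and the action as $m'\mapsto j_2+m'$; this is exactly $\Phi$ applied to $(i_1-j_1+i_2,j_2,(j_1-i_2+[0;k_1])\cap[0;k_2])$, which is the semigroup product in Case~1. The case $j_1\geqslant i_2$ is symmetric. When the intersection of the intervals is empty, both the composition and the formal product collapse to $\boldsymbol{0}$, so the equality persists.

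The main obstacle is simply bookkeeping in this verification: one must be careful to translate between the two parameterisations of the overlap interval (as a subset of $[0;k_1]$ versus a subset of $[0;k_2]$) and to confirm that the intersection appearing in the definition of the multiplication in $\boldsymbol{B}_{\omega}^{\mathscr{F}_n}$ is precisely the interval of indices on which the two partial maps compose non-trivially. No conceptual difficulty arises, and the rank bound $k+1\leqslant n+1$ guarantees that the image lies in $\mathscr{I}_\omega^{n+1}(\overrightarrow{\mathrm{conv}})$ throughout.
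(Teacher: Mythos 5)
Your proposal is correct and uses exactly the same isomorphism as the paper (the paper's map $\mathfrak{I}$ coincides with your $\Phi$), with the same overall strategy of direct verification of injectivity, surjectivity onto $\mathscr{I}_\omega^{n+1}(\overrightarrow{\mathrm{conv}})$, and the homomorphism property. The only difference is presentational: where the paper checks the homomorphism identity by an exhaustive eight-way case analysis, you compute the composition of the two partial maps uniformly via the intersection $[0;k_1]\cap(i_2-j_1+[0;k_2])$ of index sets and then translate, which is a tidier organization of the same computation.
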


\begin{proof}
We define a map $\mathfrak{I}\colon \boldsymbol{B}_{\omega}^{\mathscr{F}_n}\to \mathscr{I}_\omega^{n+1}$  by the formulae $\mathfrak{I}(\boldsymbol{0})=\boldsymbol{0}$ and
\begin{equation*}
  \mathfrak{I}(i,j,[0;k])=
  \left(
\begin{smallmatrix}
  i & i+1 & \cdots & i+k \\
  j & j+1 & \cdots & j+k \\
\end{smallmatrix}%
\right),
\qquad \hbox{for all} \quad i,j\in\omega \quad \hbox{and} \quad k=0,1,\ldots,n.
\end{equation*}
It is obvious that so defined map $\mathfrak{I}$ is injective.

\smallskip

Next we shall show that $\mathfrak{I}\colon \boldsymbol{B}_{\omega}^{\mathscr{F}_n}\to \mathscr{I}_\omega^{n+1}$ is a homomorphism.

\smallskip

It is obvious that
\begin{equation*}
  \mathfrak{I}(\boldsymbol{0}\cdot\boldsymbol{0})=\mathfrak{I}(\boldsymbol{0})=\boldsymbol{0}=\boldsymbol{0}\cdot\boldsymbol{0}= \mathfrak{I}(\boldsymbol{0})\cdot\mathfrak{I}(\boldsymbol{0}),
\end{equation*}
\begin{equation*}
  \mathfrak{I}(\boldsymbol{0}\cdot(i,j,[0;k]))=\mathfrak{I}(\boldsymbol{0})=\boldsymbol{0}=\boldsymbol{0}\cdot
  \left(
\begin{smallmatrix}
  i & i+1 & \cdots & i+k \\
  j & j+1 & \cdots & j+k \\
\end{smallmatrix}%
 \right)=
\mathfrak{I}(\boldsymbol{0})\cdot\mathfrak{I}(i,j,[0;k]),
\end{equation*}
and
\begin{equation*}
  \mathfrak{I}((i,j,[0;k])\cdot\boldsymbol{0})=\mathfrak{I}(\boldsymbol{0})=\boldsymbol{0}=
  \left(
\begin{smallmatrix}
  i & i+1 & \cdots & i+k \\
  j & j+1 & \cdots & j+k \\
\end{smallmatrix}%
 \right)
\cdot\boldsymbol{0}=
\mathfrak{I}(i,j,[0;k])\cdot\mathfrak{I}(\boldsymbol{0}),
\end{equation*}
for any non-zero element $(i,j,[0;k])$ of the semigroup $\boldsymbol{B}_{\omega}^{\mathscr{F}_n}$.

Fix arbitrary $i_1,i_2,j_1,j_2\in\omega$ and $k_1,k_2\in\{0,\dots,n\}$. In the case when $k_1\leqslant k_2$ we have that
\begin{align*}
  \mathfrak{I}((i_1,j_1,[0;k_1])&\cdot(i_2,j_2,[0;k_2]))=
\left\{
  \begin{array}{ll}
    \mathfrak{I}(i_1-j_1+i_2,j_2,(j_1-i_2+[0;k_1])\cap[0;k_2]), & \hbox{if}~ j_1<i_2;\\
    \mathfrak{I}(i_1,j_2,[0;k_1]\cap[0;k_2]),                   & \hbox{if}~ j_1=i_2;\\
    \mathfrak{I}(i_1,j_1-i_2+j_2,[0;k_1]\cap(i_2-j_1+[0;k_2])), & \hbox{if}~ j_1>i_2
  \end{array}
\right.=\\
  =&
\left\{
  \begin{array}{lll}
    \mathfrak{I}(\boldsymbol{0}),                   & \hbox{if}~ j_1<i_2 &\hbox{and}~~ j_1-i_2+k_1<0; \\
    \mathfrak{I}(i_1-j_1+i_2,j_2,[0;0]),            & \hbox{if}~ j_1<i_2 &\hbox{and}~~ j_1-i_2+k_1=0; \\
    \mathfrak{I}(i_1-j_1+i_2,j_2,[0;j_1-i_2+k_1]),  & \hbox{if}~ j_1<i_2 &\hbox{and}~~ 1\leqslant j_1-i_2+k_1\leqslant k_2;\\
    \mathfrak{I}(i_1,j_2,[0;k_1]),                  & \hbox{if}~ j_1=i_2;\\
    \mathfrak{I}(i_1,j_1-i_2+j_2,[0;k_1]),          & \hbox{if}~ j_1>i_2  &\hbox{and}~~ k_1\leqslant i_1-j_1+k_2;\\
    \mathfrak{I}(i_1,j_1-i_2+j_2,[0;i_2-j_1+k_2]),  & \hbox{if}~ j_1>i_2  &\hbox{and}~~ k_1> i_1-j_1+k_2;\\
    \mathfrak{I}(i_1,j_1-i_2+j_2,[0;0]),            & \hbox{if}~ j_1>i_2  &\hbox{and}~~ j_1=i_2+k_2;\\
    \mathfrak{I}(\boldsymbol{0}),                   & \hbox{if}~ j_1>i_2  &\hbox{and}~~ j_1>i_2+k_2
\end{array}
\right.=\\
%
   =&
\left\{
  \begin{array}{cll}
    \boldsymbol{0},                                  & \hbox{if}~ j_1<i_2 &\hbox{and}~~ j_1-i_2+k_1<0; \\
    \left(
      \begin{smallmatrix}
        i_1-j_1+i_2 \\
        j_2 \\
      \end{smallmatrix}
    \right)
    ,                                  & \hbox{if}~ j_1<i_2 &\hbox{and}~~ j_1-i_2+k_1=0; \\
    \left(
      \begin{smallmatrix}
        i_1-j_1+i_2 & \cdots & i_1+k_1 \\
        j_2         & \cdots & j_2+j_1-i_2+k_1 \\
      \end{smallmatrix}
    \right),    & \hbox{if}~ j_1<i_2 &\hbox{and}~~ 1\leqslant j_1-i_2+k_1\leqslant k_2;\\
    \left(
      \begin{smallmatrix}
        i_1 & \cdots & i_1+k_1 \\
        j_2 & \cdots & j_2+k_1 \\
      \end{smallmatrix}
    \right),                                & \hbox{if}~ j_1=i_2;\\
    \left(
      \begin{smallmatrix}
        i_1         & \cdots & i_1+k_1 \\
        j_1-i_2+j_2 & \cdots & j_1-i_2+j_2+k_1 \\
      \end{smallmatrix}
    \right),                                & \hbox{if}~ j_1>i_2  &\hbox{and}~~ k_1\leqslant i_2-j_1+k_2;\\
    \left(
      \begin{smallmatrix}
        i_1         & \cdots & i_1+i_2-j_1+k_2 \\
        j_1-i_2+j_2 & \cdots & j_2+k_2 \\
      \end{smallmatrix}
    \right),                                & \hbox{if}~ j_1>i_2  &\hbox{and}~~ k_1> i_2-j_1+k_2;\\
    \left(
      \begin{smallmatrix}
        i_1         \\
        j_1-i_2+j_2 \\
      \end{smallmatrix}
    \right),                               & \hbox{if}~ j_1>i_2  &\hbox{and}~~ j_1=i_2+k_2;\\
    \boldsymbol{0},                            & \hbox{if}~ j_1>i_2  &\hbox{and}~~ j_1>i_2+k_2
\end{array}
\right.=\\
%
  =&
\left\{
  \begin{array}{cll}
    \boldsymbol{0},                             & \hbox{if}~ j_1<i_2 &\hbox{and}~~ j_1-i_2+k_1<0; \\
    \left(
      \begin{smallmatrix}
        i_1+k_1 \\
        j_2 \\
      \end{smallmatrix}
    \right)
    ,                                           & \hbox{if}~ j_1<i_2 &\hbox{and}~~ j_1-i_2+k_1=0; \\
    \left(
      \begin{smallmatrix}
        i_1-j_1+i_2 & \cdots & i_1+k_1 \\
        j_2         & \cdots & j_2+j_1-i_2+k_1 \\
      \end{smallmatrix}
    \right),                                & \hbox{if}~ j_1<i_2 &\hbox{and}~~ 1\leqslant j_1-i_2+k_1\leqslant k_2;\\
    \left(
      \begin{smallmatrix}
        i_1 & \cdots & i_1+k_1 \\
        j_2 & \cdots & j_2+k_1 \\
      \end{smallmatrix}
    \right),                                & \hbox{if}~ j_1=i_2;\\
    \left(
      \begin{smallmatrix}
        i_1         & \cdots & i_1+k_1 \\
        j_1-i_2+j_2 & \cdots & j_1-i_2+j_2+k_1 \\
      \end{smallmatrix}
     \right),                                & \hbox{if}~ j_1>i_2  &\hbox{and}~~ k_1\leqslant i_2-j_1+k_2;\\
    \left(
      \begin{smallmatrix}
        i_1         & \cdots & i_1+i_2-j_1+k_2 \\
        j_1-i_2+j_2 & \cdots & j_2+k_2 \\
      \end{smallmatrix}
    \right),                                & \hbox{if}~ j_1>i_2  &\hbox{and}~~ k_1> i_2-j_1+k_2;\\
    \left(
      \begin{smallmatrix}
        i_1         \\
        j_2+k_2 \\
      \end{smallmatrix}
     \right),                                & \hbox{if}~ j_1>i_2  &\hbox{and}~~ j_1=i_2+k_2;\\
    \boldsymbol{0},                             & \hbox{if}~ j_1>i_2  &\hbox{and}~~ j_1>i_2+k_2
\end{array}
\right.
\end{align*}
and
\begin{align*}
  \mathfrak{I}(i_1,j_1,[0;k_1])&\cdot\mathfrak{I}(i_2,j_2,[0;k_2])=
\left(
  \begin{smallmatrix}
    i_1 & \cdots & i_1+k_1 \\
    j_1 & \cdots & j_1+k_1 \\
  \end{smallmatrix}
 \right)
\cdot
\left(
  \begin{smallmatrix}
    i_2 & \cdots & i_2+k_2 \\
    j_2 & \cdots & j_2+k_2 \\
  \end{smallmatrix}
 \right)
=\\
  =&
\left\{
  \begin{array}{cll}
    \boldsymbol{0},                                  & \hbox{if}~ j_1<i_2 &\hbox{and}~~ j_1+k_1<i_2; \\
    \left(
      \begin{smallmatrix}
        i_1+k_1 \\
        j_2 \\
      \end{smallmatrix}
    \right),                                  & \hbox{if}~ j_1<i_2 & \hbox{and}~~ j_1+k_1=i_2; \\
    \left(
      \begin{smallmatrix}
        i_1-j_1+i_2 & \cdots & i_1+k_1 \\
        j_2         & \cdots & j_2+j_1-i_2+k_1 \\
      \end{smallmatrix}
    \right),                                  & \hbox{if}~ j_1<i_2 & \hbox{and}~~ j_1+k_1\geqslant i_2+1;\\
    \left(
      \begin{smallmatrix}
        i_1 & \cdots & i_1+k_1 \\
        j_2 & \cdots & j_2+k_1 \\
      \end{smallmatrix}
    \right),                                  & \hbox{if}~ j_1=i_2;\\
    \left(
      \begin{smallmatrix}
        i_1         & \cdots & i_1+k_1 \\
        j_1-i_2+j_2 & \cdots & j_1-i_2+j_2+k_1 \\
      \end{smallmatrix}
    \right),                                  & \hbox{if}~ j_1>i_2  &\hbox{and}~~ j_1+k_1\leqslant i_2+k_2;\\
    \left(
      \begin{smallmatrix}
        i_1         & \cdots & i_1-j_1+i_2+k_2 \\
        j_1-i_2+j_2 & \cdots & j_2+k_2 \\
      \end{smallmatrix}
    \right),                                  & \hbox{if}~ j_1>i_2  &\hbox{and}~~ j_1+k_1>i_2+k_2;\\
    \left(
      \begin{smallmatrix}
        i_1 \\
        j_2+k_2 \\
      \end{smallmatrix}
    \right),                                  & \hbox{if}~ j_1>i_2  &\hbox{and}~~ j_1=i_2+k_2;\\
    \boldsymbol{0},                               & \hbox{if}~ j_1>i_2  &\hbox{and}~~ j_1>i_2+k_2.
\end{array}
\right.
\end{align*}

In the case when $k_1\geqslant k_2$ we have that
\begin{align*}
  \mathfrak{I}((i_1,j_1,[0;k_1])&\cdot(i_2,j_2,[0;k_2]))=
\left\{
  \begin{array}{ll}
    \mathfrak{I}(i_1-j_1+i_2,j_2,(j_1-i_2+[0;k_1])\cap[0;k_2]), & \hbox{if}~ j_1<i_2;\\
    \mathfrak{I}(i_1,j_2,[0;k_1]\cap[0;k_2]),                   & \hbox{if}~ j_1=i_2;\\
    \mathfrak{I}(i_1,j_1-i_2+j_2,[0;k_1]\cap(i_2-j_1+[0;k_2])), & \hbox{if}~ j_1>i_2
  \end{array}
\right.=\\
  =&
\left\{
  \begin{array}{lll}
    \mathfrak{I}(\boldsymbol{0}),                   & \hbox{if}~ j_1<i_2 &\hbox{and}~~ j_1-i_2+k_1<0; \\
    \mathfrak{I}(i_1-j_1+i_2,j_2,[0;0]),            & \hbox{if}~ j_1<i_2 &\hbox{and}~~ j_1-i_2+k_1=0; \\
    \mathfrak{I}(i_1-j_1+i_2,j_2,[0;j_1-i_2+k_1]),  & \hbox{if}~ j_1<i_2 &\hbox{and}~~ 1\leqslant j_1+k_1\leqslant i_2+k_2;\\
    \mathfrak{I}(i_1-j_1+i_2,j_2,[0;k_2]),          & \hbox{if}~ j_1<i_2 &\hbox{and}~~ j_1+k_1> i_2+k_2;\\
    \mathfrak{I}(i_1,j_2,[0;k_2]),                  & \hbox{if}~ j_1=i_2;\\
    \mathfrak{I}(i_1,j_1-i_2+j_2,[0;i_2-j_1+k_2]),  & \hbox{if}~ j_1>i_2  &\hbox{and}~~ i_2-j_1+k_2>0;\\
    \mathfrak{I}(i_1,j_1-i_2+j_2,[0;0]),            & \hbox{if}~ j_1>i_2  &\hbox{and}~~ i_2-j_1+k_2=0;\\
    \mathfrak{I}(\boldsymbol{0}),                   & \hbox{if}~ j_1>i_2  &\hbox{and}~~ i_2-j_1+k_2<0
\end{array}
\right.=\\
%
  =&
\left\{
  \begin{array}{cll}
    \boldsymbol{0},                                      & \hbox{if}~ j_1<i_2 &\hbox{and}~~ j_1-i_2+k_1<0; \\
    \left(
      \begin{smallmatrix}
        i_1-j_1+i_2 \\
        j_2 \\
      \end{smallmatrix}
    \right)                                          & \hbox{if}~ j_1<i_2 &\hbox{and}~~ j_1-i_2+k_1=0; \\
    \left(
      \begin{smallmatrix}
        i_1-j_1+i_2 & \cdots & i_1+k_1\\
        j_2         & \cdots & j_1-i_2+j_2+k_1\\
      \end{smallmatrix}
    \right)                                          & \hbox{if}~ j_1<i_2 &\hbox{and}~~ j_1+k_1\leqslant i_2+k_2; \\
    \left(
      \begin{smallmatrix}
        i_1-j_1+i_2 & \cdots & i_1-j_1+i_2+k_2\\
        j_2         & \cdots & j_2+k_2\\
      \end{smallmatrix}
    \right)                                          & \hbox{if}~ j_1<i_2 &\hbox{and}~~ j_1+k_1> i_2+k_2; \\
    \left(
      \begin{smallmatrix}
        i_1+k_2 & \cdots & i_1+k_2\\
        j_2+k_2 & \cdots & j_2+k_2\\
      \end{smallmatrix}
    \right)                                          & \hbox{if}~ j_1=i_2; \\
    \left(
      \begin{smallmatrix}
        i_1         & \cdots & i_1-j_1+i_2+k_2\\
        j_1-i_2+j_2 & \cdots & j_2+k_2\\
      \end{smallmatrix}
    \right)                                          & \hbox{if}~ j_1>i_2 &\hbox{and}~~ i_2-j_1+k_2>0; \\
    \left(
      \begin{smallmatrix}
        i_1         \\
        j_1-i_2+j_2 \\
      \end{smallmatrix}
    \right)                                          & \hbox{if}~ j_1>i_2 &\hbox{and}~~ i_2-j_1+k_2=0; \\
    \boldsymbol{0},                                      & \hbox{if}~ j_1>i_2 &\hbox{and}~~ i_2-j_1+k_2<0
   \end{array}
\right.  =
\\
%
=&
\left\{
  \begin{array}{cll}
    \boldsymbol{0},                                  & \hbox{if}~ j_1<i_2 &\hbox{and}~~ j_1-i_2+k_1<0; \\
    \left(
      \begin{smallmatrix}
        i_1+k_1 \\
        j_2 \\
      \end{smallmatrix}
    \right)                                          & \hbox{if}~ j_1<i_2 &\hbox{and}~~ j_1-i_2+k_1=0; \\
    \left(
      \begin{smallmatrix}
        i_1-j_1+i_2 & \cdots & i_1+k_1\\
        j_2         & \cdots & j_1-i_2+j_2+k_1\\
      \end{smallmatrix}
     \right)                                          & \hbox{if}~ j_1<i_2 &\hbox{and}~~ j_1+k_1\leqslant i_2+k_2; \\
    \left(
      \begin{smallmatrix}
        i_1-j_1+i_2 & \cdots & i_1-j_1+i_2+k_2\\
        j_2         & \cdots & j_2+k_2\\
      \end{smallmatrix}
     \right)                                          & \hbox{if}~ j_1<i_2 &\hbox{and}~~ j_1+k_1> i_2+k_2; \\
    \left(
      \begin{smallmatrix}
        i_1 & \cdots & i_1+k_2\\
        j_2 & \cdots & j_2+k_2\\
      \end{smallmatrix}
    \right)                                          & \hbox{if}~ j_1=i_2; \\
    \left(
      \begin{smallmatrix}
        i_1         & \cdots & i_1-j_1+i_2+k_2\\
        j_1-i_2+j_2 & \cdots & j_2+k_2\\
      \end{smallmatrix}
    \right)                                          & \hbox{if}~ j_1>i_2 &\hbox{and}~~ j_1<i_2+k_2; \\
    \left(
      \begin{smallmatrix}
        i_1         \\
        j_2+k_2 \\
      \end{smallmatrix}
    \right)                                          & \hbox{if}~ j_1>i_2 &\hbox{and}~~ j_1=i_2+k_2; \\
    \boldsymbol{0},                                      & \hbox{if}~ j_1>i_2 &\hbox{and}~~ j_1>i_2+k_2.
   \end{array}
\right.
\end{align*}
and
\begin{align*}
  \mathfrak{I}(i_1,j_1,[0;k_1])&\cdot\mathfrak{I}(i_2,j_2,[0;k_2])=
\left(
  \begin{smallmatrix}
    i_1 & \cdots & i_1+k_1 \\
    j_1 & \cdots & j_1+k_1 \\
  \end{smallmatrix}
 \right)
\cdot
\left(
  \begin{smallmatrix}
    i_2 & \cdots & i_2+k_2 \\
    j_2 & \cdots & j_2+k_2 \\
  \end{smallmatrix}
 \right)
=\\
\end{align*}

\begin{align*}
=&
\left\{
  \begin{array}{cll}
    \boldsymbol{0},                                  & \hbox{if}~ j_1<i_2 &\hbox{and}~~ j_1+k_1<i_2; \\
    \left(
      \begin{smallmatrix}
        i_1+k_1 \\
        j_2 \\
      \end{smallmatrix}
    \right),                                  & \hbox{if}~ j_1<i_2 & \hbox{and}~~ j_1+k_1=i_2; \\
    \left(
      \begin{smallmatrix}
        i_1-j_1+i_2 & \cdots & i_1+k_1 \\
        j_2         & \cdots & j_2+j_1-i_2+k_1 \\
      \end{smallmatrix}
    \right),                                  & \hbox{if}~ j_1<i_2 & \hbox{and}~~ j_1+k_1\leqslant i_2+k_2;\\
    \left(
      \begin{smallmatrix}
        i_1-j_1+i_2 & \cdots &  i_1-j_1+i_2+k_2 \\
        j_2         & \cdots & j_2+k_2 \\
      \end{smallmatrix}
     \right),                                  & \hbox{if}~ j_1<i_2 & \hbox{and}~~ j_1+k_1> i_2+k_2;\\
    \left(
      \begin{smallmatrix}
        i_1 & \cdots & i_1+k_2 \\
        j_2 & \cdots & j_2+k_2 \\
      \end{smallmatrix}
    \right),                                  & \hbox{if}~ j_1=i_2;\\
    \left(
      \begin{smallmatrix}
        i_1         & \cdots & i_1-j_1+i_2+k_2 \\
        j_1-i_2+j_2 & \cdots & i_2+k_2 \\
      \end{smallmatrix}
    \right),                                  & \hbox{if}~ j_1>i_2  &\hbox{and}~~ j_1< i_2+k_2;\\
    \left(
      \begin{smallmatrix}
        i_1 \\
        j_2+k_2 \\
      \end{smallmatrix}
    \right),                                  & \hbox{if}~ j_1>i_2  &\hbox{and}~~ j_1=i_2+k_2;\\
    \boldsymbol{0},                               & \hbox{if}~ j_1>i_2  &\hbox{and}~~ j_1>i_2+k_2.
\end{array}
\right.
\end{align*}

By Lemma~II.1.10 of \cite{Petrich-1984} the homomorphic image $\mathfrak{I}(\boldsymbol{B}_{\omega}^{\mathscr{F}_n})$ is an inverse subsemigroup of $\mathscr{I}_\omega^{n+1}$.

It is obvious that $\mathfrak{I}(\boldsymbol{0})$ is the empty partial self-map of $\omega$ and it is by the assumption is an order convex partial isomorphism of $(\omega,\leqslant)$. Also the image
\begin{equation*}
  \mathfrak{I}(i,j,[0;k])=
  \left( %
\begin{smallmatrix}
  i & i+1 & \cdots & i+k \\
  j & j+1 & \cdots & j+k \\
\end{smallmatrix}%
 \right)
\end{equation*}
is an order convex partial isomorphism of $(\omega,\leqslant)$ for all $i,j\in\omega$ and $k=0,1,\ldots,n$. The definition of $\mathfrak{I}\colon \boldsymbol{B}_{\omega}^{\mathscr{F}_n}\to \mathscr{I}_\omega^{n+1}$ implies that its co-restriction on the image $\mathscr{I}_\omega^{n+1}(\overrightarrow{\mathrm{conv}})$ is surjective, and hence $\mathfrak{I}\colon \boldsymbol{B}_{\omega}^{\mathscr{F}_n}\to \mathscr{I}_\omega^{n+1}(\overrightarrow{\mathrm{conv}})$ is an isomorphism.
\end{proof}

\begin{remark}\label{remark-2.6}
Observe that the image $\mathfrak{I}(\boldsymbol{B}_{\omega}^{\mathscr{F}_n})$ does not contains all idempotents of the semigroup $\mathscr{I}_\omega^{n+1}$, especially
$
\left(
  \begin{smallmatrix}
    0 & 2 \\
    0 & 2 \\
  \end{smallmatrix}
\right)\notin \mathfrak{I}(\boldsymbol{B}_{\omega}^{\mathscr{F}_n})
$ for any $n\geqslant 1$. But by Proposition~4 of \cite{Gutik-Mykhalenych=2020} the semigroup $\boldsymbol{B}_{\omega}^{\mathscr{F}_0}$ is isomorphic to the semigroup $\omega{\times}\omega$-matrix units, and hence $\boldsymbol{B}_{\omega}^{\mathscr{F}_0}$ is isomorphic to the semigroup $\mathscr{I}_\omega^{1}$
\end{remark}

A subset $D$ of a semigroup $S$ is said to be \emph{$\omega$-unstable} if $D$ is infinite and for any $a\in D$ and an infinite subset $B\subseteq D$, we have $aB\cup Ba\nsubseteq D$ \cite{Gutik-Lawson-Repov=2009}. A basic example of $\omega$-unstable sets is given in \cite{Gutik-Lawson-Repov=2009}: for an infinite cardinal $\lambda$ the set $D=\mathscr{I}_\omega^{n}\setminus \mathscr{I}_\omega^{n-1}$  is an $\omega$-unstable subset of $\mathscr{I}_\omega^{n}$.

For any $n\in\omega$ the definition of the semigroup operation on $\boldsymbol{B}_{\omega}^{\mathscr{F}_n}$ implies that its subsemigroup $\boldsymbol{B}_{\omega}^{\mathscr{F}_k}$ is an ideal of $\boldsymbol{B}_{\omega}^{\mathscr{F}_n}$ for any $k\in\{0,\ldots,n\}$. Also, since $\mathscr{I}_\omega^{k+1}(\overrightarrow{\mathrm{conv}})\setminus \mathscr{I}_\omega^{k}(\overrightarrow{\mathrm{conv}})$ is an infinite subset of $\mathscr{I}_\omega^{n+1}(\overrightarrow{\mathrm{conv}})$ for any $k\in\{0,\ldots,n\}$, the above arguments and Theorem~\ref{theorem-2.5} imply the following lemma:

\begin{lemma}\label{lemma-2.7}
For an arbitrary $n\in\omega$ the subsets $\boldsymbol{B}_{\omega}^{\mathscr{F}_0}\setminus\{\boldsymbol{0}\}$ and $\boldsymbol{B}_{\omega}^{\mathscr{F}_{k}}\setminus \boldsymbol{B}_{\omega}^{\mathscr{F}_{k-1}}$ are $\omega$-unstable of $\boldsymbol{B}_{\omega}^{\mathscr{F}_{n}}$ for any $k\in\{1,\ldots,n\}$.
\end{lemma}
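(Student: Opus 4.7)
The plan is to exploit the isomorphism $\mathfrak{I}$ from Theorem~\ref{theorem-2.5} and work inside $\mathscr{I}_\omega^{n+1}(\overrightarrow{\mathrm{conv}})$, where the triple $(i,j,[0;k])$ corresponds to the convex bijection from $[i;i+k]$ onto $[j;j+k]$ of rank $k+1$. For $k\geqslant 1$ the set $D_k:=\boldsymbol{B}_{\omega}^{\mathscr{F}_k}\setminus\boldsymbol{B}_{\omega}^{\mathscr{F}_{k-1}}$ equals $\{(i,j,[0;k]):i,j\in\omega\}$ and so corresponds exactly to the rank-$(k+1)$ convex partial isomorphisms; the same description holds with $D_0:=\boldsymbol{B}_{\omega}^{\mathscr{F}_0}\setminus\{\boldsymbol{0}\}=\{(i,j,[0;0]):i,j\in\omega\}$. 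Each $D_k$ is patently infinite.

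The key observation is a rigidity of multiplication at the top rank: for $\alpha=(i_\alpha,j_\alpha,[0;k])$ and $\beta=(i_\beta,j_\beta,[0;k])$ in $D_k$, the product $\alpha\beta$ lies in $D_k$ if and only if $j_\alpha=i_\beta$. Indeed, via $\mathfrak{I}$ one has $\mathrm{rank}(\mathfrak{I}(\alpha)\mathfrak{I}(\beta))=k+1$ iff $\mathrm{ran}(\mathfrak{I}(\alpha))=[j_\alpha;j_\alpha+k]$ coincides with $\mathrm{dom}(\mathfrak{I}(\beta))=[i_\beta;i_\beta+k]$; if these two intervals differ, the rank strictly drops and so $\alpha\beta\notin D_k$. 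Symmetrically, $\beta\alpha\in D_k$ iff $j_\beta=i_\alpha$. The same conclusion can be extracted directly from the semigroup operation on $\boldsymbol{B}_{\omega}^{\mathscr{F}_n}$ by inspecting when the intersection defining the third coordinate still equals $[0;k]$.

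With this in hand, fix $\alpha\in D_k$ and an arbitrary infinite $B\subseteq D_k$, and split $B=B_1\sqcup B_2$ with $B_1=\{\beta\in B:i_\beta\neq j_\alpha\}$ and $B_2=\{\beta\in B:i_\beta=j_\alpha\}$. If $B_1$ is infinite, then every $\beta\in B_1$ satisfies $\alpha\beta\notin D_k$ by the rigidity observation, so $\alpha B\nsubseteq D_k$. Otherwise $B_2$ is infinite, and each of its elements has the form $(j_\alpha,j_\beta,[0;k])$; this forces $j_\beta$ to assume infinitely many values, so at most one $\beta\in B_2$ can satisfy $j_\beta=i_\alpha$, and the remaining infinitely many $\beta\in B_2$ yield $\beta\alpha\notin D_k$, giving $B\alpha\nsubseteq D_k$. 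In either case $\alpha B\cup B\alpha\nsubseteq D_k$, which is precisely $\omega$-unstability. The only point that requires genuine verification is the rigidity claim, but it is a one-line rank count inside $\mathscr{I}_\omega^{n+1}$ once Theorem~\ref{theorem-2.5} is available; the boundary case $k=0$ is handled by the same dichotomy.
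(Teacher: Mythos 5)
Your proof is correct and follows essentially the same route as the paper's: both rest on the observation that for $\alpha,\beta$ in the top $\mathscr{D}$-class $D_k$ the product $\alpha\beta$ remains in $D_k$ only when $j_\alpha=i_\beta$, combined with the dichotomy that an infinite $B\subseteq D_k$ must exhibit either two distinct first coordinates (so left translation leaves $D_k$) or two distinct second coordinates (so right translation does). Your rank-count inside $\mathscr{I}_\omega^{n+1}(\overrightarrow{\mathrm{conv}})$ is just a repackaging of the paper's direct computation of the third coordinate of the product of triples.
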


\begin{proof}
We shall show that the set $\boldsymbol{B}_{\omega}^{\mathscr{F}_{k}}\setminus \boldsymbol{B}_{\omega}^{\mathscr{F}_{k-1}}$ is $\omega$-unstable, and the proof that the set $\boldsymbol{B}_{\omega}^{\mathscr{F}_0}\setminus\{\boldsymbol{0}\}$ is $\omega$-unstable is similar.

Fix an arbitrary distinct $(i_1,j_1,[0;k]),(i_2,j_2,[0;k])\in\boldsymbol{B}_{\omega}^{\mathscr{F}_{k}}\setminus \boldsymbol{B}_{\omega}^{\mathscr{F}_{k-1}}$. The definition of the semigroup operation of $\boldsymbol{B}_{\omega}^{\mathscr{F}_{n}}$ implies that for any $(i,j,[0;k])\in\boldsymbol{B}_{\omega}^{\mathscr{F}_{k}}\setminus \boldsymbol{B}_{\omega}^{\mathscr{F}_{k-1}}$ we have that
\begin{equation*}
(i,j,[0;k])\cdot(i_p,j_p,[0;k])=
   \left\{
     \begin{array}{ll}
       (i-j+i_p,j_p,(j-i_p+[0;k])\cap[0;k]), & \hbox{if~} j<i_p;\\
       (i,j_p,[0;k]\cap[0;k]),               & \hbox{if~} j=i_p;\\
       (i,j-i_p+j_p,[0;k]\cap(i_p-j+[0;k])), & \hbox{if~} j>i_p
     \end{array}
   \right.
\end{equation*}
for $p=1,2$. In the case when $i_1\neq i_2$ we obtain that $(i,j,[0;k])\cdot\left\{(i_1,j_1,[0;k]),(i_2,j_2,[0;k])\right\}\nsubseteq \boldsymbol{B}_{\omega}^{\mathscr{F}_{k}}\setminus \boldsymbol{B}_{\omega}^{\mathscr{F}_{k-1}}$. In the case when $j_1\neq j_2$ the proof is similar.
\end{proof}

\begin{definition}[{\!\!\cite{Gutik-Lawson-Repov=2009}}]\label{definition-2.8}
An ideal series  for a semigroup $S$ is a chain of ideals
\begin{equation*}
  I_0\subseteq I_1 \subseteq I_2\subseteq\cdots\subseteq I_m =S.
\end{equation*}
This ideal series is called \emph{tight} if $I_0$ is a finite set and $D_k=I_k\setminus I_{k-1}$ is an $\omega$-unstable subset for each $k=1,\ldots,m$.
\end{definition}

Lemma~\ref{lemma-2.7} implies

\begin{proposition}\label{proposition-2.9}
For an arbitrary $n\in\omega$ the following ideal series
\begin{equation*}
  \{\boldsymbol{0}\}\subseteq \boldsymbol{B}_{\omega}^{\mathscr{F}_0}\subseteq \boldsymbol{B}_{\omega}^{\mathscr{F}_1}\subseteq \cdots \subseteq \boldsymbol{B}_{\omega}^{\mathscr{F}_{n-1}}\subseteq \boldsymbol{B}_{\omega}^{\mathscr{F}_n}
\end{equation*}
is tight.
\end{proposition}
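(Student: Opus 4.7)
The proposition is essentially an immediate assembly of three ingredients that are already in hand, so my plan is mostly to verify that each clause of Definition~\ref{definition-2.8} is satisfied and cite the relevant fact; there is no substantive obstacle here.

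First, I would check that the chain
$\{\boldsymbol{0}\}\subseteq \boldsymbol{B}_{\omega}^{\mathscr{F}_0}\subseteq \cdots \subseteq \boldsymbol{B}_{\omega}^{\mathscr{F}_n}$
really is an ideal series in $\boldsymbol{B}_{\omega}^{\mathscr{F}_n}$. Zero is clearly an ideal, and the paragraph preceding Lemma~\ref{lemma-2.7} records that $\boldsymbol{B}_{\omega}^{\mathscr{F}_k}$ is an ideal of $\boldsymbol{B}_{\omega}^{\mathscr{F}_n}$ for every $k\in\{0,\ldots,n\}$; this follows at once from the formula for the product $(i_1,j_1,[0;k_1])\cdot(i_2,j_2,[0;k_2])$, since the third coordinate of any product is a subset of $[0;k_1]\cap(\text{shift of }[0;k_2])$, so multiplying an element of $\boldsymbol{B}_{\omega}^{\mathscr{F}_k}$ by anything in $\boldsymbol{B}_{\omega}^{\mathscr{F}_n}$ cannot increase the third-coordinate length.

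Second, I would verify the finite-bottom clause: $I_0=\{\boldsymbol{0}\}$ has cardinality $1$, which is finite.

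Finally, for the $\omega$-unstable clause: with the indexing $I_k=\boldsymbol{B}_{\omega}^{\mathscr{F}_{k-1}}$ for $k\geqslant 1$ shifted by one (i.e., setting $I_0=\{\boldsymbol{0}\}$, $I_1=\boldsymbol{B}_{\omega}^{\mathscr{F}_0},\ldots,I_{n+1}=\boldsymbol{B}_{\omega}^{\mathscr{F}_n}$), the successive differences are exactly
$D_1=\boldsymbol{B}_{\omega}^{\mathscr{F}_0}\setminus\{\boldsymbol{0}\}$ and $D_{k+1}=\boldsymbol{B}_{\omega}^{\mathscr{F}_k}\setminus \boldsymbol{B}_{\omega}^{\mathscr{F}_{k-1}}$ for $k=1,\ldots,n$, and Lemma~\ref{lemma-2.7} states precisely that each of these is $\omega$-unstable in $\boldsymbol{B}_{\omega}^{\mathscr{F}_n}$. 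Assembling the three points gives tightness, and the proof is complete in a few lines.
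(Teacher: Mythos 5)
Your proposal is correct and follows exactly the route the paper takes: the paper derives Proposition~\ref{proposition-2.9} directly from Lemma~\ref{lemma-2.7} together with the earlier observation that each $\boldsymbol{B}_{\omega}^{\mathscr{F}_k}$ is an ideal of $\boldsymbol{B}_{\omega}^{\mathscr{F}_n}$, which is precisely your assembly of the three clauses of Definition~\ref{definition-2.8}.
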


\begin{proposition}\label{proposition-2.10}
For any non-negative integer $n$ and arbitrary $p=0,1,\ldots,n-1$ the map $\mathfrak{h}_p\colon \boldsymbol{B}_{\omega}^{\mathscr{F}_n}\to \boldsymbol{B}_{\omega}^{\mathscr{F}_n}$ defined by the formulae $\mathfrak{h}_p(\boldsymbol{0})=\boldsymbol{0}$ and
\begin{equation*}
\mathfrak{h}_p(i,j,[0;k])=
\left\{
  \begin{array}{cl}
    \boldsymbol{0}, & \hbox{if}~ k=0,1,\dots,p;\\
    (i,j,[0;k-p-1]),  & \hbox{if}~ k=p+1,\ldots,n,
  \end{array}
\right.
\end{equation*}
is a homomorphism which maps the semigroup $\boldsymbol{B}_{\omega}^{\mathscr{F}_n}$ onto its subsemigroup $\boldsymbol{B}_{\omega}^{\mathscr{F}_{n-p-1}}$.
\end{proposition}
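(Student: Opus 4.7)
The plan is to reframe $\mathfrak{h}_p$ as a composite of two obvious homomorphisms. The paragraph preceding Lemma~\ref{lemma-2.7} observes that $\boldsymbol{B}_{\omega}^{\mathscr{F}_p}$ is an ideal of $\boldsymbol{B}_{\omega}^{\mathscr{F}_n}$, so the associated Rees quotient $\pi\colon \boldsymbol{B}_{\omega}^{\mathscr{F}_n}\to \boldsymbol{B}_{\omega}^{\mathscr{F}_n}/\boldsymbol{B}_{\omega}^{\mathscr{F}_p}$ is automatically a semigroup homomorphism, collapsing every element with $k\in\{0,\ldots,p\}$ (together with the original zero) to $\boldsymbol{0}$. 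I would then define a map $\phi$ on the quotient by $\phi(\boldsymbol{0})=\boldsymbol{0}$ and $\phi([(i,j,[0;k])])=(i,j,[0;k-p-1])$ for $k\in\{p+1,\ldots,n\}$, and aim to show that $\phi$ is a semigroup isomorphism onto $\boldsymbol{B}_{\omega}^{\mathscr{F}_{n-p-1}}$. Then $\mathfrak{h}_p=\phi\circ\pi$, so $\mathfrak{h}_p$ is a homomorphism, and surjectivity onto $\boldsymbol{B}_{\omega}^{\mathscr{F}_{n-p-1}}$ is immediate because any $(i,j,[0;k'])$ with $0\leqslant k'\leqslant n-p-1$ is $\mathfrak{h}_p(i,j,[0;k'+p+1])$.

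The heart of the argument is checking that $\phi$ preserves multiplication for nonzero cosets, i.e., for $k_1,k_2\in\{p+1,\ldots,n\}$. Direct inspection of the product formula
\begin{equation*}
(i_1,j_1,[0;k_1])\cdot(i_2,j_2,[0;k_2])=
\left\{
\begin{array}{ll}
(i_1{-}j_1{+}i_2,\,j_2,\,(j_1{-}i_2{+}[0;k_1])\cap[0;k_2]), & \text{if }j_1\leqslant i_2,\\
(i_1,\,j_1{-}i_2{+}j_2,\,[0;k_1]\cap(i_2{-}j_1{+}[0;k_2])), & \text{if }j_1\geqslant i_2,
\end{array}
\right.
\end{equation*}
shows that in each branch the third component is either empty (and the product is $\boldsymbol{0}$) or an initial interval $[0;k']$ with $k'=\min(k_2,\,k_1+j_1-i_2)$ in the first branch and $k'=\min(k_1,\,k_2+i_2-j_1)$ in the second. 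The substitution $k_r\mapsto k_r-p-1$ ($r=1,2$) translates $k'$ to $k'-p-1$ uniformly, and makes the intersection empty precisely when $k'\leqslant p$, i.e., precisely when the original product lies in the ideal $\boldsymbol{B}_{\omega}^{\mathscr{F}_p}$ and so has $\pi$-image $\boldsymbol{0}$. Since the first two components depend only on $i_1,j_1,i_2,j_2$ and not on $k_1,k_2$, this yields the required equality $\phi([\alpha]\cdot[\beta])=\phi([\alpha])\cdot\phi([\beta])$; the cases involving the zero coset are trivial.

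The main obstacle is purely bookkeeping. The product formula expands into several subcases (up to eight, as enumerated in the proof of Theorem~\ref{theorem-2.5}) distinguished by the relative positions of $j_1,i_2,k_1,k_2$. Because both $k'$ and the emptiness threshold for the intersection depend linearly on $(k_1,k_2)$, the substitution $k_r\mapsto k_r-p-1$ shifts each subcase into itself uniformly, and no new arithmetic subtlety arises. No further algebraic ingredients are needed beyond the semigroup operation and the ideal property of $\boldsymbol{B}_{\omega}^{\mathscr{F}_p}$.
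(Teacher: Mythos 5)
Your proof is correct, and it takes a genuinely different route from the paper's. The paper verifies by a long case-by-case computation (two orderings of $k_1,k_2$, each splitting into up to eight subcases) that $\mathfrak{h}_0$ alone is a homomorphism, and then obtains $\mathfrak{h}_p=\mathfrak{h}_0^{p+1}$ by iteration, so that $\mathfrak{h}_p$ is a composite of homomorphisms. You instead factor $\mathfrak{h}_p=\phi\circ\pi$ through the Rees quotient by the ideal $\boldsymbol{B}_{\omega}^{\mathscr{F}_p}$ and isolate the one observation that makes everything work: in either branch of the product formula the third coordinate is $[0;k']$ with $k'=\min(k_2,k_1+j_1-i_2)$ (resp.\ $k'=\min(k_1,k_2+i_2-j_1)$), the product is $\boldsymbol{0}$ exactly when this quantity is negative, and the first two coordinates do not depend on $k_1,k_2$ at all; since $\min$ commutes with subtracting the constant $p+1$ from both arguments, the shift $k_r\mapsto k_r-p-1$ sends $k'$ to $k'-p-1$ and turns ``$k'\leqslant p$'' into ``product is $\boldsymbol{0}$'', which is precisely the Rees identification. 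This is airtight and handles all $p$ simultaneously with no case enumeration; as a bonus it makes explicit the isomorphism $\boldsymbol{B}_{\omega}^{\mathscr{F}_n}/\boldsymbol{B}_{\omega}^{\mathscr{F}_p}\cong\boldsymbol{B}_{\omega}^{\mathscr{F}_{n-p-1}}$, which is exactly what Theorem~\ref{theorem-2.13} later extracts from this proposition. What the paper's brute-force computation buys in exchange is only self-containedness at the level of the defining formula; your min-formula reduction is the cleaner argument, and the only thing I would insist on is that you state and verify the min formula (including the emptiness threshold) explicitly rather than appealing to the eight subcases of Theorem~\ref{theorem-2.5}.
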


\begin{proof}
First we shall show that the map $\mathfrak{h}_0\colon \boldsymbol{B}_{\omega}^{\mathscr{F}_n}\to \boldsymbol{B}_{\omega}^{\mathscr{F}_n}$ defined by the formulae $\mathfrak{h}_0(\boldsymbol{0})=\boldsymbol{0}$ and
\begin{equation*}
\mathfrak{h}_0(i,j,[0;k])=
\left\{
  \begin{array}{cl}
    \boldsymbol{0}, & \hbox{if}~ k=0;\\
    (i,j,[0;k-1]),  & \hbox{if}~ k=1,\ldots,n,
  \end{array}
\right.
\end{equation*}
is a homomorphism.

It is obvious that
\begin{equation*}
  \mathfrak{h}_0(\boldsymbol{0})\cdot\mathfrak{h}_0(i,j,[0])=\boldsymbol{0}\cdot \boldsymbol{0}=\mathfrak{h}_0(\boldsymbol{0})=\mathfrak{h}_0(\boldsymbol{0}\cdot (i,j,[0]))
\end{equation*}
and
\begin{equation*}
  \mathfrak{h}_0(i,j,[0])\cdot\mathfrak{h}_0(\boldsymbol{0})=\boldsymbol{0}\cdot \boldsymbol{0}=\mathfrak{h}_0(\boldsymbol{0})=\mathfrak{h}_0((i,j,[0])\cdot \boldsymbol{0})
\end{equation*}
for any $i,j\in\omega$.

Fix arbitrary $i_1,i_2,j_1,j_2\in\omega$ and positive integers $k_1$ and $k_2$. In the case when $k_1\leqslant k_2$ we have that
\begin{align*}
  \mathfrak{h}_0(i_1,j_1,&[0;k_1])\cdot\mathfrak{h}_0(i_2,j_2,[0;k_2])=(i_1,j_1,[0;k_1-1])\cdot(i_2,j_2,[0;k_2-1])= \\
  =&
\left\{
  \begin{array}{ll}
    (i_1-j_1+i_2,j_2,(j_1-i_2+[0;k_1-1])\cap[0;k_2-1]), & \hbox{if}~ j_1<i_2;\\
    (i_1,j_2,[0;k_1-1]\cap[0;k_2-1]),                   & \hbox{if}~ j_1=i_2;\\
    (i_1,j_1-i_2+j_2,[0;k_1-1]\cap(i_2-j_1+[0;k_2-1])), & \hbox{if}~ j_1>i_2
  \end{array}
\right.=\\
  =&
\left\{
  \begin{array}{lll}
    \boldsymbol{0},                                     & \hbox{if}~ j_1<i_2 &\hbox{and}~~ j_1-i_2+k_1-1<0; \\
    (i_1-j_1+i_2,j_2,[0;j_1-i_2+k_1-1]),                & \hbox{if}~ j_1<i_2 &\hbox{and}~~ 0\leqslant j_1-i_2+k_1-1\leqslant k_2-1;\\
    (i_1,j_2,[0;k_1-1]),                                & \hbox{if}~ j_1=i_2;\\
    (i_1,j_1-i_2+j_2,[0;k_1-1]),                        & \hbox{if}~ j_1>i_2  &\hbox{and}~~ k_1-1\leqslant i_1-j_1+k_2-1;\\
    (i_1,j_1-i_2+j_2,[0;i_2-j_1+k_2-1]),                & \hbox{if}~ j_1>i_2  &\hbox{and}~~ k_1-1> i_1-j_1+k_2-1\geqslant 0;\\
    \boldsymbol{0},                                     & \hbox{if}~ j_1>i_2  &\hbox{and}~~ k_1-1> i_1-j_1+k_2-1< 0
  \end{array}
\right.=\\
  =&
\left\{
  \begin{array}{lll}
    \boldsymbol{0},                                     & \hbox{if}~ j_1<i_2 &\hbox{and}~~ j_1-i_2+k_1<1; \\
    (i_1-j_1+i_2,j_2,[0;j_1-i_2+k_1-1]),                & \hbox{if}~ j_1<i_2 &\hbox{and}~~ 1\leqslant j_1-i_2+k_1\leqslant k_2;\\
    (i_1,j_2,[0;k_1-1]),                                & \hbox{if}~ j_1=i_2;\\
    (i_1,j_1-i_2+j_2,[0;k_1-1]),                        & \hbox{if}~ j_1>i_2  &\hbox{and}~~ k_1\leqslant i_2-j_1+k_2;\\
    (i_1,j_1-i_2+j_2,[0;i_2-j_1+k_2-1]),                & \hbox{if}~ j_1>i_2  &\hbox{and}~~ k_1> i_2-j_1+k_2\geqslant 1;\\
    \boldsymbol{0},                                     & \hbox{if}~ j_1>i_2  &\hbox{and}~~ k_1> i_2-j_1+k_2<1,
  \end{array}
\right.
\end{align*}
and
\begin{align*}
  \mathfrak{h}_0((i_1,j_1,[0;k_1])&\cdot(i_2,j_2,[0;k_2]))=
\left\{
  \begin{array}{ll}
    \mathfrak{h}_0(i_1-j_1+i_2,j_2,(j_1-i_2+[0;k_1])\cap[0;k_2]), & \hbox{if}~ j_1<i_2;\\
    \mathfrak{h}_0(i_1,j_2,[0;k_1]\cap[0;k_2]),                   & \hbox{if}~ j_1=i_2;\\
    \mathfrak{h}_0(i_1,j_1-i_2+j_2,[0;k_1]\cap(i_2-j_1+[0;k_2])), & \hbox{if}~ j_1>i_2
  \end{array}
\right.=
\\
%
  =&
\left\{
  \begin{array}{lll}
    \mathfrak{h}_0(\boldsymbol{0}),                   & \hbox{if}~ j_1<i_2 &\hbox{and}~~ j_1-i_2+k_1<0; \\
    \mathfrak{h}_0(i_1-j_1+i_2,j_2,[0;0]\cap[0;k_2]), & \hbox{if}~ j_1<i_2 &\hbox{and}~~ j_1-i_2+k_1=0; \\
    \mathfrak{h}_0(i_1-j_1+i_2,j_2,[0;j_1-i_2+k_1]),  & \hbox{if}~ j_1<i_2 &\hbox{and}~~ 1\leqslant j_1-i_2+k_1\leqslant k_2;\\
    \mathfrak{h}_0(i_1,j_2,[0;k_1]),                  & \hbox{if}~ j_1=i_2;\\
    \mathfrak{h}_0(i_1,j_1-i_2+j_2,[0;k_1]),          & \hbox{if}~ j_1>i_2  &\hbox{and}~~ k_1\leqslant i_2-j_1+k_2;\\
    \mathfrak{h}_0(i_1,j_1-i_2+j_2,[0;k_1]\cap[0;0]), & \hbox{if}~ j_1>i_2  &\hbox{and}~~ k_1> i_2-j_1+k_2=0;\\
    \mathfrak{h}_0(\boldsymbol{0}),                   & \hbox{if}~ j_1>i_2  &\hbox{and}~~ k_1> i_2-j_1+k_2<0
\end{array}
\right.=
\end{align*}

\begin{align*}
  =&
\left\{
  \begin{array}{lll}
    \mathfrak{h}_0(\boldsymbol{0}),                  & \hbox{if}~ j_1<i_2 &\hbox{and}~~ j_1-i_2+k_1<0; \\
    \mathfrak{h}_0(i_1-j_1+i_2,j_2,[0;0]),           & \hbox{if}~ j_1<i_2 &\hbox{and}~~ j_1-i_2+k_1=0; \\
    \mathfrak{h}_0(i_1-j_1+i_2,j_2,[0;j_1-i_2+k_1]), & \hbox{if}~ j_1<i_2 &\hbox{and}~~ 1\leqslant j_1-i_2+k_1\leqslant k_2;\\
    \mathfrak{h}_0(i_1,j_2,[0;k_1]),                 & \hbox{if}~ j_1=i_2;\\
    \mathfrak{h}_0(i_1,j_1-i_2+j_2,[0;k_1]),         & \hbox{if}~ j_1>i_2  &\hbox{and}~~ k_1\leqslant i_2-j_1+k_2;\\
    \mathfrak{h}_0(i_1,j_1-i_2+j_2,[0;0]),           & \hbox{if}~ j_1>i_2  &\hbox{and}~~ k_1> i_2-j_1+k_2=0;\\
    \mathfrak{h}_0(\boldsymbol{0}),                  & \hbox{if}~ j_1>i_2  &\hbox{and}~~ k_1> i_2-j_1+k_2<0
\end{array}
\right.=\\
  =&
\left\{
  \begin{array}{lll}
    \boldsymbol{0},                                  & \hbox{if}~ j_1<i_2 &\hbox{and}~~ j_1-i_2+k_1<0; \\
    \boldsymbol{0},                                  & \hbox{if}~ j_1<i_2 &\hbox{and}~~ j_1-i_2+k_1=0; \\
    \mathfrak{h}_0(i_1-j_1+i_2,j_2,[0;j_1-i_2+k_1]), & \hbox{if}~ j_1<i_2 &\hbox{and}~~ 1\leqslant j_1-i_2+k_1\leqslant k_2;\\
    \mathfrak{h}_0(i_1,j_2,[0;k_1]),                 & \hbox{if}~ j_1=i_2;\\
    \mathfrak{h}_0(i_1,j_1-i_2+j_2,[0;k_1]),         & \hbox{if}~ j_1>i_2  &\hbox{and}~~ k_1\leqslant i_2-j_1+k_2;\\
    \boldsymbol{0},                                  & \hbox{if}~ j_1>i_2  &\hbox{and}~~ k_1> i_2-j_1+k_2=0;\\
    \boldsymbol{0},                                  & \hbox{if}~ j_1>i_2  &\hbox{and}~~ k_1> i_2-j_1+k_2<0
\end{array}
\right.=\\
%
  =&
\left\{
  \begin{array}{lll}
    \boldsymbol{0},                                     & \hbox{if}~ j_1<i_2 &\hbox{and}~~ j_1-i_2+k_1<1; \\
    (i_1-j_1+i_2,j_2,[0;j_1-i_2+k_1-1]),                & \hbox{if}~ j_1<i_2 &\hbox{and}~~ 1\leqslant j_1-i_2+k_1\leqslant k_2;\\
    (i_1,j_2,[0;k_1-1]),                                & \hbox{if}~ j_1=i_2;\\
    (i_1,j_1-i_2+j_2,[0;k_1-1]),                        & \hbox{if}~ j_1>i_2  &\hbox{and}~~ k_1\leqslant i_2-j_1+k_2;\\
    (i_1,j_1-i_2+j_2,[0;i_2-j_1+k_2-1]),                & \hbox{if}~ j_1>i_2  &\hbox{and}~~ k_1> i_2-j_1+k_2\geqslant 1;\\
    \boldsymbol{0},                                     & \hbox{if}~ j_1>i_2  &\hbox{and}~~ k_1> i_2-j_1+k_2<1,
  \end{array}
\right.
\end{align*}

In the case when $k_1\geqslant k_2$ we have that
\begin{align*}
  \mathfrak{h}_0&(i_1,j_1,[0;k_1])\cdot\mathfrak{h}_0(i_2,j_2,[0;k_2])=(i_1,j_1,[0;k_1-1])\cdot(i_2,j_2,[0;k_2-1])= \\
  =&
\left\{
  \begin{array}{ll}
    (i_1-j_1+i_2,j_2,(j_1-i_2+[0;k_1-1])\cap[0;k_2-1]), & \hbox{if}~ j_1<i_2;\\
    (i_1,j_2,[0;k_1-1]\cap[0;k_2-1]),                   & \hbox{if}~ j_1=i_2;\\
    (i_1,j_1-i_2+j_2,[0;k_1-1]\cap(i_2-j_1+[0;k_2-1])), & \hbox{if}~ j_1>i_2
  \end{array}
\right.=\\
  =&
\left\{
  \begin{array}{lll}
    \boldsymbol{0},                                          & \hbox{if}~ j_1<i_2 &\hbox{and}~~ j_1-i_2+k_1-1<0; \\
    (i_1-j_1+i_2,j_2,[0;0]\cap[0;k_2-1]),                    & \hbox{if}~ j_1<i_2 &\hbox{and}~~ j_1-i_2+k_1-1=0; \\
    (i_1-j_1+i_2,j_2,[0;j_1-i_2+k_1{-}1]{\cap}[0;k_2{-}1]),  & \hbox{if}~ j_1<i_2 &\hbox{and}~~ 1\leqslant j_1-i_2+k_1-1\leqslant k_2-1; \\
    (i_1-j_1+i_2,j_2,[0;k_2-1]),                             & \hbox{if}~ j_1<i_2 &\hbox{and}~~ k_2-1<j_1-i_2+k_1-1; \\
    (i_1,j_2,[0;k_1-1]\cap[0;k_2-1]),                        & \hbox{if}~ j_1=i_2;\\
    (i_1,j_1-i_2+j_2,[0;i_2-j_1+k_2-1]),                     & \hbox{if}~ j_1>i_2 &\hbox{and}~~ i_2-j_1+k_2-1\geqslant 0;\\
    \boldsymbol{0},                                          & \hbox{if}~ j_1>i_2 &\hbox{and}~~ i_2-j_1+k_2-1< 0
  \end{array}
\right.=
\\
%
  =&
\left\{
  \begin{array}{lll}
    \boldsymbol{0},                                      & \hbox{if}~ j_1<i_2 &\hbox{and}~~ j_1-i_2+k_1<1; \\
    (i_1-j_1+i_2,j_2,[0;0]),                             & \hbox{if}~ j_1<i_2 &\hbox{and}~~ j_1-i_2+k_1=1; \\
    (i_1-j_1+i_2,j_2,[0;j_1-i_2+k_1-1]),                 & \hbox{if}~ j_1<i_2 &\hbox{and}~~ 1\leqslant j_1-i_2+k_1-1\leqslant k_2-1; \\
    (i_1-j_1+i_2,j_2,[0;k_2-1]),                         & \hbox{if}~ j_1<i_2 &\hbox{and}~~ k_2<j_1-i_2+k_1; \\
    (i_1,j_2,[0;k_2-1]),                                 & \hbox{if}~ j_1=i_2;\\
    (i_1,j_1-i_2+j_2,[0;i_2-j_1+k_2-1]),                 & \hbox{if}~ j_1>i_2 &\hbox{and}~~ i_2-j_1+k_2\geqslant 1;\\
    \boldsymbol{0},                                      & \hbox{if}~ j_1>i_2 &\hbox{and}~~ i_2-j_1+k_2<1
  \end{array}
\right.
\end{align*}
and
\begin{align*}
  \mathfrak{h}_0((i_1,j_1,[0;k_1])&\cdot(i_2,j_2,[0;k_2]))=
\left\{
  \begin{array}{ll}
    \mathfrak{h}_0(i_1-j_1+i_2,j_2,(j_1-i_2+[0;k_1])\cap[0;k_2]), & \hbox{if}~ j_1<i_2;\\
    \mathfrak{h}_0(i_1,j_2,[0;k_1]\cap[0;k_2]),                   & \hbox{if}~ j_1=i_2;\\
    \mathfrak{h}_0(i_1,j_1-i_2+j_2,[0;k_1]\cap(i_2-j_1+[0;k_2])), & \hbox{if}~ j_1>i_2
  \end{array}
\right.=
\end{align*}

\begin{align*}
  =&
\left\{
  \begin{array}{lll}
    \mathfrak{h}_0(\boldsymbol{0}),                                       & \hbox{if}~ j_1<i_2 &\hbox{and}~~ j_1-i_2+k_1<0; \\
    \mathfrak{h}_0(i_1-j_1+i_2,j_2,[0;0]),                                & \hbox{if}~ j_1<i_2 &\hbox{and}~~ j_1-i_2+k_1=0; \\
    \mathfrak{h}_0(i_1-j_1+i_2,j_2,[0;j_1-i_2+k_1]),                      & \hbox{if}~ j_1<i_2 &\hbox{and}~~ k_2\geqslant j_1-i_2+k_1\geqslant 1; \\
    \mathfrak{h}_0(i_1-j_1+i_2,j_2,[0;k_2]),                              & \hbox{if}~ j_1<i_2 &\hbox{and}~~ k_2< j_1-i_2+k_1\geqslant 1; \\
    \mathfrak{h}_0(i_1,j_2,[0;k_2]),                                      & \hbox{if}~ j_1=i_2;\\
    \mathfrak{h}_0(i_1,j_1-i_2+j_2,[0;i_2-j_1+k_2]),                      & \hbox{if}~ j_1>i_2 &\hbox{and}~~ 1\leqslant i_2-j_1+k_2;\\
    \mathfrak{h}_0(i_1,j_1-i_2+j_2,[0;0]),                                & \hbox{if}~ j_1>i_2 &\hbox{and}~~ 0=i_2-j_1+k_2;\\
    \mathfrak{h}_0(\boldsymbol{0}),                                       & \hbox{if}~ j_1>i_2 &\hbox{and}~~ i_2-j_1+k_2<0
  \end{array}
\right.=\\
%
  =&
\left\{
  \begin{array}{lll}
    \mathfrak{h}_0(\boldsymbol{0}),                                       & \hbox{if}~ j_1<i_2 &\hbox{and}~~ j_1-i_2+k_1<0; \\
    \mathfrak{h}_0(\boldsymbol{0}),                                       & \hbox{if}~ j_1<i_2 &\hbox{and}~~ j_1-i_2+k_1=0; \\
    \mathfrak{h}_0(i_1-j_1+i_2,j_2,[0;j_1-i_2+k_1]),                      & \hbox{if}~ j_1<i_2 &\hbox{and}~~ k_2\geqslant j_1-i_2+k_1\geqslant 1; \\
    \mathfrak{h}_0(i_1-j_1+i_2,j_2,[0;k_2]),                              & \hbox{if}~ j_1<i_2 &\hbox{and}~~ k_2< j_1-i_2+k_1\geqslant 1; \\
    \mathfrak{h}_0(i_1,j_2,[0;k_2]),                                      & \hbox{if}~ j_1=i_2;\\
    \mathfrak{h}_0(i_1,j_1-i_2+j_2,[0;i_2-j_1+k_2]),                      & \hbox{if}~ j_1>i_2 &\hbox{and}~~ 1\leqslant i_2-j_1+k_2;\\
    \mathfrak{h}_0(\boldsymbol{0}),                                       & \hbox{if}~ j_1>i_2 &\hbox{and}~~ 0=i_2-j_1+k_2;\\
    \mathfrak{h}_0(\boldsymbol{0}),                                       & \hbox{if}~ j_1>i_2 &\hbox{and}~~ i_2-j_1+k_2<0
  \end{array}
\right.=\\
%
  =&
\left\{
  \begin{array}{lll}
    \boldsymbol{0},                          & \hbox{if}~ j_1<i_2 &\hbox{and}~~ j_1-i_2+k_1\leqslant 0; \\
    (i_1-j_1+i_2,j_2,[0;j_1-i_2+k_1-1]),     & \hbox{if}~ j_1<i_2 &\hbox{and}~~ k_2\geqslant j_1-i_2+k_1\geqslant 1; \\
    (i_1-j_1+i_2,j_2,[0;k_2-1]),             & \hbox{if}~ j_1<i_2 &\hbox{and}~~ k_2< j_1-i_2+k_1\geqslant 1; \\
    (i_1,j_2,[0;k_2-1]),                     & \hbox{if}~ j_1=i_2;\\
    (i_1,j_1-i_2+j_2,[0;i_2-j_1+k_2-1]),     & \hbox{if}~ j_1>i_2 &\hbox{and}~~ 1\leqslant i_2-j_1+k_2;\\
    \boldsymbol{0},                          & \hbox{if}~ j_1>i_2 &\hbox{and}~~ i_2-j_1+k_2\leqslant 0.
  \end{array}
\right.
\end{align*}

Next observe that by induction we obtain that
\begin{equation*}
  \mathfrak{h}_p=\underbrace{\mathfrak{h}_0\circ\cdots\circ \mathfrak{h}_0}_{p+1\hbox{-\tiny{times}}}=\mathfrak{h}_0^{p+1}.
\end{equation*}
for any $p=1,\ldots,n-1$.

Simple verifications show that the homomorphism $\mathfrak{h}_p\colon \boldsymbol{B}_{\omega}^{\mathscr{F}_n}\to \boldsymbol{B}_{\omega}^{\mathscr{F}_n}$ maps the semigroup $\boldsymbol{B}_{\omega}^{\mathscr{F}_n}$ onto its subsemigroup $\boldsymbol{B}_{\omega}^{\mathscr{F}_{n-p-1}}$.
\end{proof}

\begin{proposition}\label{proposition-2.11}
For any positive integer $n$ every congruence on the semigroup $\mathscr{I}_\omega^{n}(\overrightarrow{\mathrm{conv}})$ is Rees.
\end{proposition}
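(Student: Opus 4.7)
My plan is to invoke the isomorphism $\mathscr{I}_\omega^{n}(\overrightarrow{\mathrm{conv}})\cong \boldsymbol{B}_\omega^{\mathscr{F}_{n-1}}$ of Theorem~\ref{theorem-2.5} and prove by induction on $n\geqslant 1$ that every congruence on $\boldsymbol{B}_\omega^{\mathscr{F}_{n-1}}$ is Rees. For the base case $n=1$, Remark~\ref{remark-2.6} identifies $\boldsymbol{B}_\omega^{\mathscr{F}_0}$ with the $\omega{\times}\omega$-matrix units semigroup, whose only ideals are $\{\boldsymbol{0}\}$ and the whole semigroup; a short calculation shows that any two distinct non-zero matrix units can be separated by left-multiplication with a suitable $(m,m,[0;0])$, so any non-trivial congruence forces $a\,\mathfrak{C}\,\boldsymbol{0}$ for some non-zero $a$ and is therefore universal (hence Rees).

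For the inductive step, suppose the result holds on $\boldsymbol{B}_\omega^{\mathscr{F}_{n-2}}$. Let $\mathfrak{C}$ be a non-trivial congruence on $\boldsymbol{B}_\omega^{\mathscr{F}_{n-1}}$ and put $I:=[\boldsymbol{0}]_\mathfrak{C}$, which is automatically an ideal. The linchpin is the Key Claim that $\boldsymbol{B}_\omega^{\mathscr{F}_0}\subseteq I$. To prove it I would choose $(a,b)\in\mathfrak{C}$ with $a\neq b$ and both non-zero, and view them as partial convex order isomorphisms $\alpha,\beta$ through $\mathfrak{I}$. By combinatoriality (Proposition~\ref{proposition-2.1}\eqref{proposition-2.1(8)}) $\alpha$ and $\beta$ are not $\mathscr{H}$-related, so either $\operatorname{dom}\alpha\neq\operatorname{dom}\beta$ or $\operatorname{im}\alpha\neq\operatorname{im}\beta$. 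In the first case, pick (WLOG) $p\in\operatorname{dom}\alpha\setminus\operatorname{dom}\beta$ and observe that left-multiplication by $e_p:=(p,p,[0;0])$ yields a non-zero singleton $e_p\cdot a$ while $e_p\cdot b=\boldsymbol{0}$. In the second case, pick $p$ in the common domain (so $\alpha(p)\neq\beta(p)$) and $q:=\alpha(p)$; then a direct computation gives $e_p\cdot a\cdot e_q=(p,q,[0;0])$ while $e_p\cdot b\cdot e_q=\boldsymbol{0}$. Either way a non-zero element of $\boldsymbol{B}_\omega^{\mathscr{F}_0}$ lies in $I$, and since any such element $(i,j,[0;0])$ generates all of $\boldsymbol{B}_\omega^{\mathscr{F}_0}$ as a two-sided ideal via the factorisation $(i',j',[0;0])=(i',i,[0;0])\cdot(i,j,[0;0])\cdot(j,j',[0;0])$, the Key Claim follows.

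With the Key Claim in place, Proposition~\ref{proposition-2.10} tells us that $\ker\mathfrak{h}_0=\mathfrak{C}_{\boldsymbol{B}_\omega^{\mathscr{F}_0}}\subseteq\mathfrak{C}$, where $\mathfrak{h}_0\colon\boldsymbol{B}_\omega^{\mathscr{F}_{n-1}}\twoheadrightarrow\boldsymbol{B}_\omega^{\mathscr{F}_{n-2}}$ is the homomorphism of that proposition. The correspondence theorem for semigroup congruences then provides a congruence $\widetilde{\mathfrak{C}}$ on $\boldsymbol{B}_\omega^{\mathscr{F}_{n-2}}\cong\mathscr{I}_\omega^{n-1}(\overrightarrow{\mathrm{conv}})$ with $\mathfrak{C}=\mathfrak{h}_0^{-1}(\widetilde{\mathfrak{C}})$; by the inductive hypothesis, $\widetilde{\mathfrak{C}}=\mathfrak{C}_J$ for some ideal $J$, and the identity $\mathfrak{h}_0^{-1}(\mathfrak{C}_J)=\mathfrak{C}_{\mathfrak{h}_0^{-1}(J)}$ (which holds because $\boldsymbol{0}\in J$ absorbs the whole kernel class of $\boldsymbol{0}$) shows that $\mathfrak{C}$ is the Rees congruence associated with the ideal $\mathfrak{h}_0^{-1}(J)$, completing the induction.

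I expect the Key Claim to be the only genuinely delicate step; once it is in hand, the correspondence-theorem reduction is standard. Its proof is a short case analysis that combines combinatoriality with the two-sided-sandwich trick by singleton idempotents $(p,p,[0;0])$ and with the fact that any non-zero rank-$0$ element generates $\boldsymbol{B}_\omega^{\mathscr{F}_0}$ as an ideal; noticing that the second case (same domain, different image) requires the \emph{two-sided} sandwich rather than a single left multiplication is the main subtlety.
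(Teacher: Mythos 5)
Your proposal is correct, but it is organised quite differently from the paper's proof. The paper inducts on the rank $k$ \emph{inside} a fixed $\mathscr{I}_\omega^{n}(\overrightarrow{\mathrm{conv}})$: it shows that any non-trivial identification between elements of rank $\leqslant k$ collapses all of $\mathscr{I}_\omega^{k}(\overrightarrow{\mathrm{conv}})$, via a four-case analysis (a pair containing $\boldsymbol{0}$; comparable idempotents, handled by restricting or conjugating by a shift $\gamma$; incomparable idempotents, reduced to the comparable case through $\alpha\beta\preccurlyeq\alpha$; and general elements, reduced to idempotents via $\alpha\alpha^{-1}\,\mathfrak{C}\,\beta\beta^{-1}$ and combinatoriality). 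You instead induct on $n$ itself: your Key Claim collapses only the bottom ideal $\boldsymbol{B}_\omega^{\mathscr{F}_0}$, by sandwiching a non-trivially congruent pair between rank-one idempotents $e_p,e_q$, and you then quotient by $\mathfrak{C}_{\mathfrak{h}_0}$ and invoke the correspondence theorem together with Proposition~\ref{proposition-2.10} and the identity $\mathfrak{h}_0^{-1}(\mathfrak{C}_J)=\mathfrak{C}_{\mathfrak{h}_0^{-1}(J)}$. Both arguments bottom out in the congruence-freeness of the semigroup of $\omega{\times}\omega$-matrix units. Your route avoids the paper's case analysis on comparable versus incomparable idempotents and the conjugation trick, at the price of leaning on the quotient structure $\boldsymbol{B}_\omega^{\mathscr{F}_{n-1}}/\boldsymbol{B}_\omega^{\mathscr{F}_0}\cong\boldsymbol{B}_\omega^{\mathscr{F}_{n-2}}$, which the paper only exploits later (in Theorem~\ref{theorem-2.13}); I checked the sandwich computations and the transfer of Rees congruences along $\mathfrak{h}_0^{-1}$, and they go through. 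One small point to patch: you choose $(a,b)\in\mathfrak{C}$ with \emph{both} entries non-zero, but a non-trivial congruence might a priori only contain pairs of the form $(a,\boldsymbol{0})$; in that degenerate case $a\in[\boldsymbol{0}]_\mathfrak{C}$ directly, and the same sandwich $e_p\cdot a\cdot e_q$ with $p\in\operatorname{dom}\alpha$, $q=\alpha(p)$ still puts a non-zero element of $\boldsymbol{B}_\omega^{\mathscr{F}_0}$ into the ideal $[\boldsymbol{0}]_\mathfrak{C}$, so the Key Claim survives --- but the case should be stated.
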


\begin{proof}
First we observe that since the semigroup $\mathscr{I}_\omega^{n}(\overrightarrow{\mathrm{conv}})$ has the zero $\boldsymbol{0}$ the identity congruence on $\mathscr{I}_\omega^{n}(\overrightarrow{\mathrm{conv}})$ is Rees, and it is obvious that the universal congruence on $\mathscr{I}_\omega^{n}(\overrightarrow{\mathrm{conv}})$  is Rees, too.

By induction we shall show the following: if $\mathfrak{C}$ is a congruence $\mathscr{I}_\omega^{n}(\overrightarrow{\mathrm{conv}})$ such that for some $k\leqslant n$ there exist two distinct $\mathfrak{C}$-equivalent elements $\alpha,\beta\in \mathscr{I}_\omega^{k}(\overrightarrow{\mathrm{conv}})$ with $\max\{\operatorname{rank}\alpha,\operatorname{rank}\beta\}=k$, then all elements of subsemigroup $\mathscr{I}_\omega^{k}(\overrightarrow{\mathrm{conv}})$ are equivalent.

In the case when $k=1$ then it is obvious that the semigroup $\mathscr{I}_\omega^{1}(\overrightarrow{\mathrm{conv}})$ is isomorphic to the semigroup $\mathscr{I}_\omega^{1}$ which is isomorphic to the semigroup of $\omega{\times}\omega$-matrix units $\mathscr{B}_{\omega}$. Since the semigroup $\mathscr{B}_{\omega}$ of $\omega{\times}\omega$-matrix units is congruence-free (see \cite[Corollary~3]{Gutik-Pavlyk=2005}), the statement that any two distinct elements of the semigroup $\mathscr{I}_\omega^{1}(\overrightarrow{\mathrm{conv}})$ are $\mathfrak{C}$-equivalent implies that all elements of $\mathscr{I}_\omega^{1}(\overrightarrow{\mathrm{conv}})$ are $\mathfrak{C}$-equivalent. Hence the initial step of induction holds.

Next we shall show the step of induction: if $\mathfrak{C}$ is a congruence $\mathscr{I}_\omega^{n}(\overrightarrow{\mathrm{conv}})$ such that there exist two distinct $\mathfrak{C}$-equivalent elements $\alpha,\beta\in \mathscr{I}_\omega^{k+1}(\overrightarrow{\mathrm{conv}})$ with $\max\{\operatorname{rank}\alpha,\operatorname{rank}\beta\}=k+1$, then the statement that all elements of the subsemigroup $\mathscr{I}_\omega^{k}(\overrightarrow{\mathrm{conv}})$ are $\mathfrak{C}$-equivalent implies that all elements of the subsemigroup $\mathscr{I}_\omega^{k+1}(\overrightarrow{\mathrm{conv}})$ are $\mathfrak{C}$-equivalent, as well.

\smallskip

Next we consider all possible cases.

\smallskip

\textbf{(I)}. Suppose that
$\alpha=
\left(
  \begin{smallmatrix}
    a & a+1 & \cdots & a+k \\
    b & b+1 & \cdots & b+k \\
  \end{smallmatrix}
\right)
$, $\beta=\boldsymbol{0}$ and $\alpha\mathfrak{C}\beta$. Since $C$ is a congruence on $\mathscr{I}_\omega^{n}(\overrightarrow{\mathrm{conv}})$, for any element  $\gamma=
\left(
  \begin{smallmatrix}
    c & c+1 & \cdots & c+k_1 \\
    d & d+1 & \cdots & d+k_1 \\
  \end{smallmatrix}
\right)
$ of the subsemigroup $\mathscr{I}_\omega^{k+1}(\overrightarrow{\mathrm{conv}})$, where $k_1\leqslant k+1$, we have that
\begin{equation*}
  \gamma=
\left(
  \begin{smallmatrix}
    c & c+1 & \cdots & c+k_1 \\
    a & a+1 & \cdots & a+k_1 \\
  \end{smallmatrix}
\right)  \cdot\alpha\cdot
\left(
  \begin{smallmatrix}
    b & b+1 & \cdots & b+k_1 \\
    d & d+1 & \cdots & d+k_1 \\
  \end{smallmatrix}
\right)
\end{equation*}
is $\mathfrak{C}$-equivalent to
\begin{equation*}
\left(
  \begin{smallmatrix}
    c & c+1 & \cdots & c+k_1 \\
    a & a+1 & \cdots & a+k_1 \\
  \end{smallmatrix}
\right)  \cdot\boldsymbol{0}\cdot
\left(
  \begin{smallmatrix}
    b & b+1 & \cdots & b+k_1 \\
    d & d+1 & \cdots & d+k_1 \\
  \end{smallmatrix}
\right)=\boldsymbol{0},
\end{equation*}
and hence $\gamma\mathfrak{C}\boldsymbol{0}$.

\smallskip

\textbf{(II)}. Suppose that
$\alpha=
\left(
  \begin{smallmatrix}
    a & a+1 & \cdots & a+k \\
    a & a+1 & \cdots & a+k \\
  \end{smallmatrix}
\right)
$ and $\beta=
\left(
  \begin{smallmatrix}
    b & b+1 & \cdots & b+k_1 \\
    b & b+1 & \cdots & b+k_1 \\
  \end{smallmatrix}
\right)$ are non-zero $\mathfrak{C}$-equivalent idempotents of the subsemigroup $\mathscr{I}_\omega^{k}(\overrightarrow{\mathrm{conv}})$ such that $k_1\leqslant k$ and $\beta\preccurlyeq\alpha$. In this case we have that $[b; b+k_1]\subseteq [a;a+k]$. We put
\begin{equation*}
  \varepsilon=
  \left\{
    \begin{array}{ll}
\left(
  \begin{smallmatrix}
    a+1 & \cdots & a+k \\
    a+1 & \cdots & a+k \\
  \end{smallmatrix}
\right)
, & \hbox{if~} a=b;\\
\left(
  \begin{smallmatrix}
    a & \cdots & a+k-1 \\
    a & \cdots & a+k-1 \\
  \end{smallmatrix}
\right)
, & \hbox{if~} a+k=b+k_1
    \end{array}
  \right.
\end{equation*}
and $\gamma=
\left(
  \begin{smallmatrix}
    a   & a+1 & \cdots & a+k \\
    a+1 & a+2 & \cdots & a+k+1 \\
  \end{smallmatrix}
\right)$
if $a<b$ and $b+k_1<a+k$.

In the case when either $a=b$ or $a+k=b+k_1$ we obtain that $\varepsilon\alpha$ and $\varepsilon\beta$ are distinct $\mathfrak{C}$-equivalent idempotents of the subsemigroup $\mathscr{I}_\omega^{k-1}(\overrightarrow{\mathrm{conv}})$ and hence by the assumption of induction all elements of $\mathscr{I}_\omega^{k-1}(\overrightarrow{\mathrm{conv}})$ are $\mathfrak{C}$-equivalent.

In the case when $a<b$ and $b+k_1<a+k$ we obtain that $\gamma\alpha\gamma^{-1}$ and $\gamma\beta\gamma^{-1}$ are distinct $\mathfrak{C}$-equivalent idempotents of the subsemigroup $\mathscr{I}_\omega^{k}(\overrightarrow{\mathrm{conv}})$, because they have distinct rank $\leqslant k$. Hence by the assumption of induction all elements of $\mathscr{I}_\omega^{k}(\overrightarrow{\mathrm{conv}})$ are $\mathfrak{C}$-equivalent.

In both above cases we get that $\alpha\mathfrak{C}\boldsymbol{0}$, which implies that case \textbf{(I)} holds.

\smallskip

\textbf{(III)}. Suppose that
$\alpha$ and $\beta$ are distinct incomparable non-zero $\mathfrak{C}$-equivalent idempotents of the subsemigroup $\mathscr{I}_\omega^{k}(\overrightarrow{\mathrm{conv}})$ of $\mathscr{I}_\omega^{n}(\overrightarrow{\mathrm{conv}})$ such that $\operatorname{rank}\alpha=k+1$. Then $\alpha=\alpha\alpha\mathfrak{C}\alpha\beta$ and $\alpha\beta\preccurlyeq\alpha$ which implies that either case \textbf{(II)} or case \textbf{(I)} holds.

\smallskip

\textbf{(IV)}. Suppose that
$\alpha$ and $\beta$ are distinct non-zero $\mathfrak{C}$-equivalent elements of the subsemigroup $\mathscr{I}_\omega^{k}(\overrightarrow{\mathrm{conv}})$ of $\mathscr{I}_\omega^{n}(\overrightarrow{\mathrm{conv}})$ such that $\operatorname{rank}\alpha=k+1$. Then at least one of the following conditions holds $\alpha\alpha^{-1}\neq\beta\beta^{-1}$ or $\alpha^{-1}\alpha\neq\beta^{-1}\beta$, because by Proposition~\ref{proposition-2.1}\eqref{proposition-2.1(8)} and Theorem~\ref{theorem-2.5}  all $\mathscr{H}$-classes in $\mathscr{I}_\omega^{n}(\overrightarrow{\mathrm{conv}})$ are singletons. By Proposition~2.3.4(1) of \cite{Lawson-1998}, $\alpha\alpha^{-1}\mathfrak{C}\beta\beta^{-1}$ and $\alpha^{-1}\alpha\mathfrak{C}\beta^{-1}\beta$, and hence at least one of cases \textbf{(II)} or \textbf{(III)} holds.
\end{proof}

Theorem~\ref{theorem-2.5} and Proposition~\ref{proposition-2.11} imply the description of all congruences on the semigroup $\boldsymbol{B}_{\omega}^{\mathscr{F}_n}$:

\begin{theorem}\label{theorem-2.12}
For an arbitrary $n\in\omega$ the semigroup $\boldsymbol{B}_{\omega}^{\mathscr{F}_n}$ admits only Rees congruences.
\end{theorem}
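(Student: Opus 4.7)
The proof is essentially a transfer through the isomorphism of Theorem~\ref{theorem-2.5}. My plan is first to rephrase the claim on the side of $\mathscr{I}_\omega^{n+1}(\overrightarrow{\mathrm{conv}})$, then apply Proposition~\ref{proposition-2.11}, and finally pull the Rees structure back to $\boldsymbol{B}_{\omega}^{\mathscr{F}_n}$.

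In detail, let $\mathfrak{I}\colon \boldsymbol{B}_{\omega}^{\mathscr{F}_n}\to \mathscr{I}_\omega^{n+1}(\overrightarrow{\mathrm{conv}})$ be the isomorphism constructed in Theorem~\ref{theorem-2.5}, and let $\mathfrak{C}$ be an arbitrary congruence on $\boldsymbol{B}_{\omega}^{\mathscr{F}_n}$. Transport $\mathfrak{C}$ across $\mathfrak{I}$ to obtain the relation $\widetilde{\mathfrak{C}}=\{(\mathfrak{I}(s),\mathfrak{I}(t))\colon (s,t)\in\mathfrak{C}\}$ on $\mathscr{I}_\omega^{n+1}(\overrightarrow{\mathrm{conv}})$; since $\mathfrak{I}$ is a bijective homomorphism, $\widetilde{\mathfrak{C}}$ is a congruence. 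Note that $n+1$ is a positive integer, so Proposition~\ref{proposition-2.11} applies and yields an ideal $\widetilde{J}\subseteq \mathscr{I}_\omega^{n+1}(\overrightarrow{\mathrm{conv}})$ with $\widetilde{\mathfrak{C}}=\mathfrak{C}_{\widetilde{J}}=(\widetilde{J}\times \widetilde{J})\cup \Delta_{\mathscr{I}_\omega^{n+1}(\overrightarrow{\mathrm{conv}})}$.

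Set $J=\mathfrak{I}^{-1}(\widetilde{J})$. Because $\mathfrak{I}$ is an isomorphism, $J$ is an ideal of $\boldsymbol{B}_{\omega}^{\mathscr{F}_n}$, and the bijective translation of $\widetilde{\mathfrak{C}}=\mathfrak{C}_{\widetilde{J}}$ back through $\mathfrak{I}$ gives
\begin{equation*}
\mathfrak{C}=(J\times J)\cup \Delta_{\boldsymbol{B}_{\omega}^{\mathscr{F}_n}}=\mathfrak{C}_J,
\end{equation*}
which is the Rees congruence associated with $J$. Hence every congruence on $\boldsymbol{B}_{\omega}^{\mathscr{F}_n}$ is Rees.

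There is no real obstacle here beyond checking that the isomorphism $\mathfrak{I}$ indeed carries ideals to ideals and Rees congruences to Rees congruences; these are general facts about isomorphisms of semigroups and require no further computation. The substantive content of the theorem is already contained in Proposition~\ref{proposition-2.11}, and Theorem~\ref{theorem-2.5} is precisely what makes that proposition applicable to $\boldsymbol{B}_{\omega}^{\mathscr{F}_n}$.
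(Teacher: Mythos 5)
Your proposal is correct and is exactly the paper's argument: the paper derives Theorem~\ref{theorem-2.12} directly from Theorem~\ref{theorem-2.5} and Proposition~\ref{proposition-2.11}, with the transport of congruences and ideals across the isomorphism $\mathfrak{I}$ left implicit. Your write-up merely makes that routine transfer explicit.
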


\begin{theorem}\label{theorem-2.13}
Let $n$ be a non-negative integer and $S$ be a semigroup. For any homomorphism $\mathfrak{h}\colon \boldsymbol{B}_{\omega}^{\mathscr{F}_n}\to S$ the image $\mathfrak{h}(\boldsymbol{B}_{\omega}^{\mathscr{F}_n})$ is either isomorphic to $\boldsymbol{B}_{\omega}^{\mathscr{F}_k}$ for some $k=0,1,\ldots,n$, or is a singleton.
\end{theorem}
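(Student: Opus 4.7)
The plan is to invoke the first isomorphism theorem for semigroups: the image $\mathfrak{h}(\boldsymbol{B}_{\omega}^{\mathscr{F}_n})$ is isomorphic to $\boldsymbol{B}_{\omega}^{\mathscr{F}_n}/\mathfrak{C}_{\mathfrak{h}}$, where $\mathfrak{C}_{\mathfrak{h}}$ is the kernel congruence of $\mathfrak{h}$. By Theorem~\ref{theorem-2.12} every congruence on $\boldsymbol{B}_{\omega}^{\mathscr{F}_n}$ is Rees, so $\mathfrak{C}_{\mathfrak{h}}=\mathfrak{C}_I$ for some ideal $I$ of $\boldsymbol{B}_{\omega}^{\mathscr{F}_n}$ (with the identity congruence corresponding to $I=\{\boldsymbol{0}\}$, in which case $\mathfrak{C}_I=\Delta$). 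Hence it suffices to list the ideals of $\boldsymbol{B}_{\omega}^{\mathscr{F}_n}$ and identify each Rees quotient.

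The first substep is to show that the only ideals are exactly the members $\{\boldsymbol{0}\},\boldsymbol{B}_{\omega}^{\mathscr{F}_0},\ldots,\boldsymbol{B}_{\omega}^{\mathscr{F}_n}$ of the tight chain of Proposition~\ref{proposition-2.9}. Each of these is already known to be an ideal. Conversely, for a non-empty ideal $I$ let $K$ be the largest integer such that $(i_0,j_0,[0;K])\in I$ for some $i_0,j_0\in\omega$ (if none exists, $I=\{\boldsymbol{0}\}$). Using the semigroup operation explicitly, multiplication of $(i_0,j_0,[0;K])$ on the left by $(a,i_0,[0;K])$ and on the right by $(j_0,b,[0;K])$ produces $(a,b,[0;K])$, so all such elements lie in $I$. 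Then multiplication on the right by the idempotent $(b,b,[0;l])$ for $l\le K$ yields $(a,b,[0;l])\in I$. Hence $I\supseteq\boldsymbol{B}_{\omega}^{\mathscr{F}_K}$, and by maximality of $K$, $I=\boldsymbol{B}_{\omega}^{\mathscr{F}_K}$.

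With the ideal classification in hand, I would finish by computing each Rees quotient. If $I=\{\boldsymbol{0}\}$, then $\mathfrak{h}$ is injective and the image is isomorphic to $\boldsymbol{B}_{\omega}^{\mathscr{F}_n}$ (the case $k=n$). If $I=\boldsymbol{B}_{\omega}^{\mathscr{F}_p}$ for some $p\in\{0,1,\ldots,n-1\}$, then Proposition~\ref{proposition-2.10} furnishes a surjective homomorphism $\mathfrak{h}_p\colon\boldsymbol{B}_{\omega}^{\mathscr{F}_n}\to\boldsymbol{B}_{\omega}^{\mathscr{F}_{n-p-1}}$ whose kernel congruence is exactly $\mathfrak{C}_{\boldsymbol{B}_{\omega}^{\mathscr{F}_p}}=\mathfrak{C}_I$, so the first isomorphism theorem yields $\boldsymbol{B}_{\omega}^{\mathscr{F}_n}/\mathfrak{C}_I\cong\boldsymbol{B}_{\omega}^{\mathscr{F}_{n-p-1}}$ (the case $k=n-p-1\in\{0,\ldots,n-1\}$). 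Finally, if $I=\boldsymbol{B}_{\omega}^{\mathscr{F}_n}$ the congruence is universal and the image is a singleton. The main obstacle is the first substep; once the ideal lattice is pinned down, everything else assembles from results already in place.
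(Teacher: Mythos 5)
Your proposal is correct and follows essentially the same route as the paper: reduce to Rees congruences via Theorem~\ref{theorem-2.12}, observe that the only ideals are the members of the chain $\{\boldsymbol{0}\}\subsetneqq\boldsymbol{B}_{\omega}^{\mathscr{F}_0}\subsetneqq\cdots\subsetneqq\boldsymbol{B}_{\omega}^{\mathscr{F}_n}$, and identify the Rees quotient by $\boldsymbol{B}_{\omega}^{\mathscr{F}_p}$ with $\boldsymbol{B}_{\omega}^{\mathscr{F}_{n-p-1}}$ via the homomorphism $\mathfrak{h}_p$ of Proposition~\ref{proposition-2.10}. The only cosmetic difference is that you verify the ideal classification by a direct (and correct) computation, whereas the paper deduces the maximality of the ideal series from its description of the $\mathscr{J}$-classes.
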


\begin{proof}
By Theorem~\ref{theorem-2.12} the homomorphism $\mathfrak{h}$ generates the Rees congruence $\mathfrak{C}_\mathfrak{h}$ on the semigroup $\boldsymbol{B}_{\omega}^{\mathscr{F}_n}$. By Proposition~\ref{proposition-2.1}\eqref{proposition-2.1(9)} the following ideal series
\begin{equation*}
  \{\boldsymbol{0}\}\subsetneqq \boldsymbol{B}_{\omega}^{\mathscr{F}_0}\subsetneqq \boldsymbol{B}_{\omega}^{\mathscr{F}_1}\subsetneqq \cdots \subsetneqq \boldsymbol{B}_{\omega}^{\mathscr{F}_{n-1}}\subsetneqq \boldsymbol{B}_{\omega}^{\mathscr{F}_n}
\end{equation*}
is maximal in $\boldsymbol{B}_{\omega}^{\mathscr{F}_n}$, i.e., if $\mathscr{J}$ is an ideal of $\boldsymbol{B}_{\omega}^{\mathscr{F}_n}$ then either $\mathscr{J}=\{\boldsymbol{0}\}$ or $\mathscr{J}=\boldsymbol{B}_{\omega}^{\mathscr{F}_m}$ for some $m=0,1,\ldots,n$.

It is obvious that if $\mathscr{J}=\{\boldsymbol{0}\}$ then the Rees congruence $\mathfrak{C}_\mathscr{J}$ generates the injective homomorphism $\mathfrak{h}_{\mathfrak{C}_\mathscr{J}}$, and hence the image $\mathfrak{h}_{\mathfrak{C}_\mathscr{J}}(\boldsymbol{B}_{\omega}^{\mathscr{F}_n})$ is isomorphic to the semigroup $\boldsymbol{B}_{\omega}^{\mathscr{F}_n}$. Similar in the case when $\mathscr{J}=\boldsymbol{B}_{\omega}^{\mathscr{F}_n}$ we have that the image $\mathfrak{h}_{\mathfrak{C}_\mathscr{J}}(\boldsymbol{B}_{\omega}^{\mathscr{F}_n})$ is a singleton.

Suppose that $\mathscr{J}=\boldsymbol{B}_{\omega}^{\mathscr{F}_m}$ for some $m=0,1,\ldots,n-1$. Then the Rees congruence $\mathfrak{C}_\mathscr{J}$ generates the natural homomorphism $\mathfrak{h}\colon \boldsymbol{B}_{\omega}^{\mathscr{F}_n}\to \boldsymbol{B}_{\omega}^{\mathscr{F}_n}/\mathscr{J}$. It is obvious that $\alpha\mathfrak{C}_\mathscr{J}\beta$ if and only if $\mathfrak{h}_m(\alpha)=\mathfrak{h}_m(\beta)$ for $\alpha,\beta\in \boldsymbol{B}_{\omega}^{\mathscr{F}_n}$ where $\mathfrak{h}_m\colon \boldsymbol{B}_{\omega}^{\mathscr{F}_n}\to \boldsymbol{B}_{\omega}^{\mathscr{F}_n}$ is the homomorphism defined in Proposition~\ref{proposition-2.10}.  Then by Proposition~\ref{proposition-2.10} the image $\mathfrak{h}(\boldsymbol{B}_{\omega}^{\mathscr{F}_n})$ is isomorphic to the semigroup $\boldsymbol{B}_{\omega}^{\mathscr{F}_{n-m-1}}$.
\end{proof}


\section{On topologizations and closure of the semigroup $\boldsymbol{B}_{\omega}^{\mathscr{F}_n}$}

In this section we establish topologizations of the semigroup $\boldsymbol{B}_{\omega}^{\mathscr{F}_n}$ and its compact-like shift-continuous topologies.

\begin{theorem}\label{theorem-3.1}
Let $n$ be a non-negative integer. Then for any shift-continuous $T_1$-topology $\tau$ on the semigroup $\boldsymbol{B}_{\omega}^{\mathscr{F}_n}$ every non-zero element of $\boldsymbol{B}_{\omega}^{\mathscr{F}_n}$ is an isolated point of $(\boldsymbol{B}_{\omega}^{\mathscr{F}_n},\tau)$ and hence every  subset in $(\boldsymbol{B}_{\omega}^{\mathscr{F}_n},\tau)$ which contains zero is closed. Moreover, for any non-zero element $(i,j,[0;k])$ of  $\boldsymbol{B}_{\omega}^{\mathscr{F}_n}$ the set ${\uparrow_{\preccurlyeq}}(i,j,[0;k])$ is open-and-closed in $(\boldsymbol{B}_{\omega}^{\mathscr{F}_n},\tau)$.
\end{theorem}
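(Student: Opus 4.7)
The plan is to reduce everything to the first claim, that every non-zero element is an isolated point; the remaining two assertions then follow with little extra work. Once every non-zero point is an open singleton, any subset containing $\boldsymbol{0}$ has its complement contained in the set of non-zero (hence isolated) elements, so this complement is a union of open singletons and is open; the original subset is therefore closed. Similarly, for any non-zero $(i,j,[0;k])$, the set ${\uparrow_{\preccurlyeq}}(i,j,[0;k])$ is finite by Lemma~\ref{lemma-2.2} and contains only non-zero elements (since $a\preccurlyeq\boldsymbol{0}$ would force $a=\boldsymbol{0}$), so it is a finite union of open singletons (hence open) and a finite subset of a $T_1$-space (hence closed).

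To isolate a non-zero $a=(i,j,[0;k])$, set $e=aa^{-1}$ and $f=a^{-1}a$; these are non-zero idempotents because $a$ is non-zero. Define $\Phi\colon \boldsymbol{B}_{\omega}^{\mathscr{F}_n}\to \boldsymbol{B}_{\omega}^{\mathscr{F}_n}$ by $\Phi(x)=exf$; this map is continuous as the composition of the left shift by $e$ and the right shift by $f$, and $\Phi(a)=eaf=a$. The key technical step is to show that the set $U:=\Phi^{-1}(\boldsymbol{B}_{\omega}^{\mathscr{F}_n}\setminus\{\boldsymbol{0}\})=\{x\colon exf\neq\boldsymbol{0}\}$ is simultaneously open and finite. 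Openness is immediate from $T_1$: $\{\boldsymbol{0}\}$ is closed, its complement is open, and preimages of open sets under continuous maps are open. Finiteness is obtained by a double application of Lemma~\ref{lemma-2.4}: for any non-zero $y\in e\boldsymbol{B}_{\omega}^{\mathscr{F}_n}f$, writing $exf=y$ as $z\cdot f=y$ with $z:=ex$ yields only finitely many values of $z$, and for each such non-zero $z$ the equation $e\cdot x=z$ again has only finitely many solutions; since $e\boldsymbol{B}_{\omega}^{\mathscr{F}_n}f$ is itself finite by Lemma~\ref{lemma-2.3}, $U$ is a finite union of finite sets.

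Once $U$ is known to be a finite open neighbourhood of $a$, isolation is automatic: $U\setminus\{a\}$ is a finite subset of a $T_1$-space and hence closed, so $\{a\}=U\setminus(U\setminus\{a\})$ is open. The main obstacle is choosing a continuous self-map whose preimage of $\boldsymbol{B}_{\omega}^{\mathscr{F}_n}\setminus\{\boldsymbol{0}\}$ is both finite and contains $a$; the symmetric sandwich $\Phi(x)=exf$ works because $eaf=a$ places $a$ inside the preimage, and the finiteness of $e\boldsymbol{B}_{\omega}^{\mathscr{F}_n}f$ from Lemma~\ref{lemma-2.3} together with the finite-fibre property supplied by Lemma~\ref{lemma-2.4} are exactly tailored to give the desired finiteness of $U$.
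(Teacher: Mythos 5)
Your argument is correct, but it reaches the key claim (isolation of every non-zero point) by a genuinely different route than the paper. The paper's proof invokes Proposition~7 of \cite{Gutik-Lawson-Repov=2009} together with the tight ideal series coming from the $\omega$-unstable sets of Lemma~\ref{lemma-2.7}/Proposition~\ref{proposition-2.9} to get a neighbourhood of $(i,j,[0;k])$ meeting the ideal $\boldsymbol{B}_{\omega}^{\mathscr{F}_k}$ only in that point, and then uses separate continuity plus Lemma~1.4.6(4) of \cite{Lawson-1998} to trap a smaller neighbourhood inside the finite up-set ${\uparrow_{\preccurlyeq}}(i,j,[0;k])$ of Lemma~\ref{lemma-2.2}. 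You instead sandwich with $e=aa^{-1}$, $f=a^{-1}a$ and observe that $U=\{x\colon exf\neq\boldsymbol{0}\}$ is an open neighbourhood of $a$ (preimage of the complement of the closed singleton $\{\boldsymbol{0}\}$ under the continuous map $x\mapsto exf$) which is finite by Lemma~\ref{lemma-2.3} combined with a two-step application of Lemma~\ref{lemma-2.4}; all the steps check out, including the observation that $z=ex$ must be non-zero whenever $exf=y\neq\boldsymbol{0}$, so Lemma~\ref{lemma-2.4} applies at both stages. Your approach is more self-contained: it bypasses the external result on tight ideal series entirely and never needs the order-theoretic machinery for the isolation step, relying only on the finiteness Lemmas~\ref{lemma-2.3} and~\ref{lemma-2.4} and the $T_1$ axiom. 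What the paper's route buys in exchange is the explicit containment of a neighbourhood in ${\uparrow_{\preccurlyeq}}(i,j,[0;k])$, which ties the topology to the natural partial order; but since you derive the open-and-closedness of the up-set afterwards from Lemma~\ref{lemma-2.2} and the already-established isolation (noting correctly that $\boldsymbol{0}$ cannot lie in the up-set of a non-zero element), nothing is lost.
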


\begin{proof}
Fix an arbitrary non-zero element $(i,j,[0;k])$ of the semigroup $\boldsymbol{B}_{\omega}^{\mathscr{F}_n}$, $i,j\in\omega$, $k\in\{0,\ldots,n\}$. Proposition~7 of \cite{Gutik-Lawson-Repov=2009} and Proposition~\ref{proposition-2.10} imply there exists an open neighbourhood $U_{(i,j,[0;k])}$ of the point $(i,j,[0;k])$ in $(\boldsymbol{B}_{\omega}^{\mathscr{F}_n},\tau)$ such that
\begin{itemize}
  \item $U_{(i,j,[0;k])}\subseteq \boldsymbol{B}_{\omega}^{\mathscr{F}_n}\setminus \boldsymbol{B}_{\omega}^{\mathscr{F}_{k-1}}$ and $(i,j,[0;k])$ is an isolated point in $\boldsymbol{B}_{\omega}^{\mathscr{F}_k}$ if $k\in\{1,\ldots,n\}$; \quad and
  \item $U_{(i,j,[0;k])}\subseteq \boldsymbol{B}_{\omega}^{\mathscr{F}_n}\setminus\{\boldsymbol{0}\}$ and $(i,j,[0;k])$ is an isolated point in $\boldsymbol{B}_{\omega}^{\mathscr{F}_0}$ if $k=0$.
\end{itemize}
By separate continuity of the semigroup operation in $(\boldsymbol{B}_{\omega}^{\mathscr{F}_n},\tau)$ there exists an open neighbourhood $V_{(i,j,[0;k])}$ of $(i,j,[0;k])$ such that $V_{(i,j,[0;k])}\subseteq U_{(i,j,[0;k])}$ and
\begin{equation*}
(i,i,[0;k])\cdot V_{(i,j,[0;k])}\cdot (j,j,[0;k])\subseteq U_{(i,j,[0;k])}.
\end{equation*}
We claim that $V_{(i,j,[0;k])}\subseteq {\uparrow_{\preccurlyeq}}(i,j,[0;k])$. Suppose to the contrary that there exists $(i_1,j_1,[0;k_1])\in V_{(i,j,[0;k])}\setminus {\uparrow_{\preccurlyeq}}(i,j,[0;k])$. Then by Lemma~1.4.6(4) of \cite{Lawson-1998} we have that
\begin{equation*}
(i,i,[0;k])\cdot(i_1,j_1,[0;k_1])\cdot (j,j,[0;k])\neq (i,j,[0;k]).
\end{equation*}
Since $\boldsymbol{B}_{\omega}^{\mathscr{F}_k}$ is an ideal of $\boldsymbol{B}_{\omega}^{\mathscr{F}_n}$ the above inequality implies that
\begin{equation*}
(i,i,[0;k])\cdot V_{(i,j,[0;k])}\cdot (j,j,[0;k])\nsubseteq U_{(i,j,[0;k])},
\end{equation*}
a contradiction. Hence $V_{(i,j,[0;k])}\subseteq {\uparrow_{\preccurlyeq}}(i,j,[0;k])$. By Lemma~\ref{lemma-2.2} the set ${\uparrow_{\preccurlyeq}}(i,j,[0;k])$ is finite which implies that $(i,j,[0;k])$ is an isolated point of $(\boldsymbol{B}_{\omega}^{\mathscr{F}_n},\tau)$, because $(\boldsymbol{B}_{\omega}^{\mathscr{F}_n},\tau)$ is a  $T_1$-space.

The last statement follows from the equality
\begin{equation*}
  {\uparrow_{\preccurlyeq}}(i,j,[0;k])=\left\{(a,b,[0;p])\in \boldsymbol{B}_{\omega}^{\mathscr{F}_n}\colon (i,i,[0;k])\cdot(a,b,[0;p])=(i,j,[0;k])\right\}
\end{equation*}
and the assumption that $\tau$ is a shift-continuous $T_1$-topology on the semigroup $\boldsymbol{B}_{\omega}^{\mathscr{F}_n}$.
\end{proof}

Recall \cite{Engelking-1989} a topological space $X$ is called:
\begin{itemize}
  \item \emph{scattered} if $X$ contains no non-empty subset which is dense-in-itself;
  \item \emph{$0$-dimensional} if $X$ has a base which consists of open-and-closed subsets;
  \item \emph{collectionwise normal} if for every discrete family $\{F_i\}_{i\in \mathscr{S}}$ of closed subsets of $X$ there exists a pairwise disjoint family of open sets $\{U_i\}_{i\in \mathscr{S}}$, such that $F_i\subseteq U_i$ for all $i\in \mathscr{S}$.
\end{itemize}

\begin{corollary}\label{corollary-3.2}
Let $n$ be a non-negative integer. Then for any shift-continuous $T_1$-topology $\tau$ on the semigroup $\boldsymbol{B}_{\omega}^{\mathscr{F}_n}$ the space  $(\boldsymbol{B}_{\omega}^{\mathscr{F}_n},\tau)$ is scattered, $0$-dimensional and collectionwise normal.
\end{corollary}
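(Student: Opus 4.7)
The plan is to lean heavily on Theorem~\ref{theorem-3.1}, which furnishes two strong facts: every non-zero element of $\boldsymbol{B}_{\omega}^{\mathscr{F}_n}$ is an isolated point of $(\boldsymbol{B}_{\omega}^{\mathscr{F}_n},\tau)$, and every subset of $\boldsymbol{B}_{\omega}^{\mathscr{F}_n}$ that contains $\boldsymbol{0}$ is closed. Together these pin down the topology as almost discrete, with the zero playing the role of (at worst) the unique non-isolated point. All three properties are then very short deductions.

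For scattered: given a non-empty $A \subseteq \boldsymbol{B}_{\omega}^{\mathscr{F}_n}$, either $A$ contains some non-zero element $x$, which is isolated in the whole space and hence isolated in the subspace $A$, or $A=\{\boldsymbol{0}\}$, in which case $\boldsymbol{0}$ is trivially isolated in $A$. For $0$-dimensionality, I would exhibit a base of clopen sets: every singleton $\{x\}$ with $x\neq\boldsymbol{0}$ is open by isolation and closed because $\tau$ is $T_1$, and for any open neighbourhood $U$ of $\boldsymbol{0}$, the set $U$ itself is closed by Theorem~\ref{theorem-3.1} (it contains $\boldsymbol{0}$), so it is its own clopen subneighbourhood.

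For collectionwise normality, let $\{F_s\}_{s\in\mathscr{S}}$ be a discrete family of closed subsets. By discreteness, choose an open neighbourhood $W$ of $\boldsymbol{0}$ that meets at most one $F_s$, say $F_{s_0}$ (if $W$ meets none, the argument below simplifies by setting $U_s=F_s$ for every $s$). For each $s\neq s_0$ the set $F_s$ avoids $W$, hence avoids $\boldsymbol{0}$, so $F_s$ consists entirely of isolated points and is therefore open; put $U_s=F_s$. Set $U_{s_0}=W\cup F_{s_0}$, which is open because $F_{s_0}\setminus\{\boldsymbol{0}\}$ is a set of isolated points and $W$ is open. Pairwise disjointness is then immediate: for $s\neq s_0$ one has $W\cap F_s=\varnothing$ by the choice of $W$ and $F_{s_0}\cap F_s=\varnothing$ by disjointness of the family, so $U_{s_0}\cap U_s=\varnothing$, and for $s,s'\neq s_0$ distinct, $U_s\cap U_{s'}=F_s\cap F_{s'}=\varnothing$.

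The only mild bookkeeping obstacle is the case split in the collectionwise normal argument for the distinguished index $s_0$: one must verify that adjoining $W$ to $F_{s_0}$ does not spoil disjointness with the remaining $U_s$, and this is handled by the very defining property of $W$. The rest of the corollary is an essentially formal consequence of Theorem~\ref{theorem-3.1}.
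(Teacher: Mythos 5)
Your proposal is correct and follows essentially the same route as the paper: everything is deduced from Theorem~\ref{theorem-3.1} (all non-zero elements isolated, all sets containing $\boldsymbol{0}$ closed), and your collectionwise-normality argument is the paper's own construction $U_{s_0}=W\cup F_{s_0}$, $U_s=F_s$ for $s\neq s_0$, just with the disjointness and openness checks written out explicitly rather than delegated to Theorem~5.1.17 of Engelking.
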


\begin{proof}
Theorem~\ref{theorem-3.1} implies that $(\boldsymbol{B}_{\omega}^{\mathscr{F}_n},\tau)$ is a scattered, $0$-dimensional space.

Let $\{F_s\}_{s\in\mathscr{S}}$ be a discrete family  of closed subsets of $\left(\boldsymbol{B}_{\omega}^{\mathscr{F}_n},\tau\right)$. By Theorem~\ref{theorem-3.1} every non-zero element of $\boldsymbol{B}_{\omega}^{\mathscr{F}_n}$ is an isolated point of $(\boldsymbol{B}_{\omega}^{\mathscr{F}_n},\tau)$. In the case when every element of the family $\{F_s\}_{s\in\mathscr{S}}$ does not contain the zero $\boldsymbol{0}$ of $\boldsymbol{B}_{\omega}^{\mathscr{F}_n}$  by Theorem~5.1.17 from \cite{Engelking-1989} the space $\left(\boldsymbol{B}_{\omega}^{\mathscr{F}_n},\tau\right)$ is collectionwise normal. Suppose that $\boldsymbol{0}\in F_{s_0}$ for some $s_0\in \mathscr{S}$. Let $U(\boldsymbol{0})$ be an open neighbourhood of the zero $\boldsymbol{0}$ of $\boldsymbol{B}_{\omega}^{\mathscr{F}_n}$ which intersects at more one element of the family $\{F_s\}_{s\in\mathscr{S}}$. Put $U_{s_0}=U(0)\cup F_{s_0}$ and $U_s=F_s$ for all $s\in \mathscr{S}\setminus\{s_o\}$. Then $U_s\cap U_t=\varnothing$ for all distinct $s,t\in\mathscr{S}$ and hence by Theorem~5.1.17 from \cite{Engelking-1989} the space $\left(\boldsymbol{B}_{\omega}^{\mathscr{F}_n},\tau\right)$ is collectionwise normal.
\end{proof}

\begin{example}\label{example-3.3}
Let $n$ be a non-negative integer.  We define a topology $\tau_{\mathrm{Ac}}$  on the semigroup $\boldsymbol{B}_{\omega}^{\mathscr{F}_n}$ in the following way. All non-zero elements of the semigroup $\boldsymbol{B}_{\omega}^{\mathscr{F}_n}$ are isolated points of $(\boldsymbol{B}_{\omega}^{\mathscr{F}_n},\tau_{\mathrm{Ac}})$ and the family $\mathscr{B}_{\mathrm{Ac}}(\boldsymbol{0})=\left\{A\subseteq \boldsymbol{B}_{\omega}^{\mathscr{F}_n}\colon \boldsymbol{0}\in A \; \hbox{and} \; \boldsymbol{B}_{\omega}^{\mathscr{F}_n}\setminus A \; \hbox{is finite}\right\}$ determines the base of the topology $\tau_{\mathrm{Ac}}$ at the point $\boldsymbol{0}$.

It is obvious that the topological space $\left(\boldsymbol{B}_{\omega}^{\mathscr{F}_n},\tau_{\mathrm{Ac}}\right)$ is homeomorphic to the Alexandroff one-point compactification of the discrete infinite countable space, and hence $\left(\boldsymbol{B}_{\omega}^{\mathscr{F}_n},\tau_{\mathrm{Ac}}\right)$ is a Hausdorff compact space. Then the space $\left(\boldsymbol{B}_{\omega}^{\mathscr{F}_n},\tau_{\mathrm{Ac}}\right)$ is normal and since it has a countable base, by the Urysohn Metrization Theorem (see \cite[Theorem~4.2.9]{Engelking-1989}) the space $\left(\boldsymbol{B}_{\omega}^{\mathscr{F}_n},\tau_{\mathrm{Ac}}\right)$ is metrizable.

Next we shall show that $\left(\boldsymbol{B}_{\omega}^{\mathscr{F}_n},\tau_{\mathrm{Ac}}\right)$ is a semitopological semigroup. Let $\alpha$ and $\beta$ be non-zero elements of the semigroup $\boldsymbol{B}_{\omega}^{\mathscr{F}_n}$. Since $\alpha$ and $\beta$ are isolated points in $\left(\boldsymbol{B}_{\omega}^{\mathscr{F}_n},\tau_{\mathrm{Ac}}\right)$, it is sufficient to show how to find for a fixed open neighbourhood $U_{\boldsymbol{0}}$ open neighbourhoods $V_{\boldsymbol{0}}$ and $W_{\boldsymbol{0}}$ of the zero $\boldsymbol{0}$  in $\left(\boldsymbol{B}_{\omega}^{\mathscr{F}_n},\tau_{\mathrm{Ac}}\right)$ such that
\begin{equation*}
  V_{\boldsymbol{0}}\cdot \alpha\subseteq U_{\boldsymbol{0}} \qquad \hbox{and} \qquad \beta\cdot W_{\boldsymbol{0}}\subseteq U_{\boldsymbol{0}}.
\end{equation*}
Since the space $\left(\boldsymbol{B}_{\omega}^{\mathscr{F}_n},\tau_{\mathrm{Ac}}\right)$ is compact, any open neighbourhood $U_{\boldsymbol{0}}$ of the zero $\boldsymbol{0}$ is cofinite subset in $\boldsymbol{B}_{\omega}^{\mathscr{F}_n}$. By Lemma~\ref{lemma-2.3},
\begin{equation*}
  V_{\boldsymbol{0}}=\{\gamma\in U_{\boldsymbol{0}}\colon \gamma\cdot\alpha\in U_{\boldsymbol{0}}\} \qquad \hbox{and} \qquad W_{\boldsymbol{0}}=\{\gamma\in U_{\boldsymbol{0}}\colon\beta\cdot\gamma\in U_{\boldsymbol{0}}\}
\end{equation*}
are cofinite subsets of $U_{\boldsymbol{0}}$ and hence by the definition of the topology $\tau_{\mathrm{Ac}}$ the sets $V_{\boldsymbol{0}}$ and $W_{\boldsymbol{0}}$ are required open neighbourhoods of the zero $\boldsymbol{0}$  in $\left(\boldsymbol{B}_{\omega}^{\mathscr{F}_n},\tau_{\mathrm{Ac}}\right)$.

Since all non-zero elements of the semigroup $\boldsymbol{B}_{\omega}^{\mathscr{F}_n}$ are isolated point in $\left(\boldsymbol{B}_{\omega}^{\mathscr{F}_n},\tau_{\mathrm{Ac}}\right)$ and every open neighbourhood $U_{\boldsymbol{0}}$ of the zero in $\left(\boldsymbol{B}_{\omega}^{\mathscr{F}_n},\tau_{\mathrm{Ac}}\right)$ has the finite complement in $\boldsymbol{B}_{\omega}^{\mathscr{F}_n}$, the inversion is continuous in $\left(\boldsymbol{B}_{\omega}^{\mathscr{F}_n},\tau_{\mathrm{Ac}}\right)$.
\end{example}

The following theorem describes all compact-like shift-continuous $T_1$-topologies on the semigroup $\boldsymbol{B}_{\omega}^{\mathscr{F}_n}$.

\begin{theorem}\label{theorem-3.4}
Let $n$ be a non-negative integer. Then for any shift-continuous $T_1$-topology $\tau$ on the semigroup $\boldsymbol{B}_{\omega}^{\mathscr{F}_n}$ the following conditions are equivalent:
\begin{enumerate}
  \item\label{theorem-3.3(1)} $(\boldsymbol{B}_{\omega}^{\mathscr{F}_n},\tau)$ is a compact semitopological semigroup;
  \item\label{theorem-3.3(2)} $(\boldsymbol{B}_{\omega}^{\mathscr{F}_n},\tau)$ is topologically isomorphic to $\left(\boldsymbol{B}_{\omega}^{\mathscr{F}_n},\tau_{\mathrm{Ac}}\right)$;
  \item\label{theorem-3.3(3)} $(\boldsymbol{B}_{\omega}^{\mathscr{F}_n},\tau)$ is a compact semitopological semigroup with continuous inversion;
  \item\label{theorem-3.3(4)} $(\boldsymbol{B}_{\omega}^{\mathscr{F}_n},\tau)$ is an $\omega_{\mathfrak{d}}$-compact space.
\end{enumerate}
\end{theorem}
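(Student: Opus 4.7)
The plan is to prove the circle of implications $(2)\Rightarrow(3)\Rightarrow(1)\Rightarrow(4)\Rightarrow(2)$, since three of the four arrows are short and essentially all of the content lies in the last one. For $(2)\Rightarrow(3)$ and $(3)\Rightarrow(1)$ I would simply invoke Example~\ref{example-3.3}, which already displays $(\boldsymbol{B}_{\omega}^{\mathscr{F}_n},\tau_{\mathrm{Ac}})$ as a compact Hausdorff semitopological semigroup with continuous inversion, properties that a topological isomorphism transports. For $(1)\Rightarrow(4)$ I would note that continuous images of compact spaces are compact, which is the definition of $\omega_{\mathfrak{d}}$-compactness.

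The substantive work is $(4)\Rightarrow(2)$. Since the identity map is trivially a semigroup isomorphism, it suffices to show $\tau=\tau_{\mathrm{Ac}}$. Theorem~\ref{theorem-3.1} gives one inclusion for free: every non-zero element is $\tau$-isolated, so any finite set of non-zero elements is $\tau$-closed, hence every cofinite subset containing $\boldsymbol{0}$ is $\tau$-open; this yields $\tau_{\mathrm{Ac}}\subseteq\tau$. The remaining task is to show that no $\tau$-neighbourhood of $\boldsymbol{0}$ can have infinite complement.

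For the reverse inclusion I would argue by contradiction. Suppose some $\tau$-open neighbourhood $U$ of $\boldsymbol{0}$ has infinite complement $F=\boldsymbol{B}_{\omega}^{\mathscr{F}_n}\setminus U$. Then $F$ is a $\tau$-closed infinite subset consisting entirely of isolated points (since $\boldsymbol{0}\notin F$). Enumerating $F=\{x_0,x_1,x_2,\ldots\}$, I would define $f\colon\boldsymbol{B}_{\omega}^{\mathscr{F}_n}\to\omega_{\mathfrak{d}}$ by $f(y)=0$ for $y\in U$ and $f(x_k)=k+1$ for each $k\in\omega$. The fibre $f^{-1}(0)=U$ is open, and every other non-empty fibre $f^{-1}(k+1)=\{x_k\}$ is the singleton of an isolated point, so $f$ is continuous. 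Its image is all of $\omega$, which is not compact in $\omega_{\mathfrak{d}}$, contradicting $\omega_{\mathfrak{d}}$-compactness. Hence $F$ must be finite, $\tau\subseteq\tau_{\mathrm{Ac}}$, and the identity is the required topological isomorphism.

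I do not foresee a serious obstacle: Theorem~\ref{theorem-3.1} already does the hard structural work by collapsing any shift-continuous $T_1$-topology to its neighbourhood filter at $\boldsymbol{0}$, so the proof of $(4)\Rightarrow(2)$ reduces to the elementary observation that a closed discrete infinite subspace disjoint from an open neighbourhood of $\boldsymbol{0}$ furnishes an explicit continuous surjection onto $\omega_{\mathfrak{d}}$, contradicting $\omega_{\mathfrak{d}}$-compactness.
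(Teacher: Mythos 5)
Your proposal is correct and follows essentially the same route as the paper: the three easy arrows are handled the same way, and your key step for $(4)\Rightarrow(2)$ — taking a neighbourhood of $\boldsymbol{0}$ with infinite complement, using Theorem~\ref{theorem-3.1} to make it open-and-closed with all complementary points isolated, and building the explicit continuous surjection onto $\omega_{\mathfrak{d}}$ — is exactly the paper's argument for $(4)\Rightarrow(1)$, merely packaged so as to land directly on $\tau=\tau_{\mathrm{Ac}}$ rather than passing through compactness first.
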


\begin{proof}
Implications \eqref{theorem-3.3(1)}$\Rightarrow$\eqref{theorem-3.3(4)},  \eqref{theorem-3.3(2)}$\Rightarrow$\eqref{theorem-3.3(1)}, \eqref{theorem-3.3(2)}$\Rightarrow$\eqref{theorem-3.3(3)} and \eqref{theorem-3.3(3)}$\Rightarrow$\eqref{theorem-3.3(1)} are obvious. Since by Theorem~\ref{theorem-3.1} every non-zero element of the semigroup $\boldsymbol{B}_{\omega}^{\mathscr{F}_n}$ is an isolated point in $(\boldsymbol{B}_{\omega}^{\mathscr{F}_n},\tau)$, statement \eqref{theorem-3.3(1)} implies \eqref{theorem-3.3(2)}.

\eqref{theorem-3.3(4)}$\Rightarrow$\eqref{theorem-3.3(1)} Suppose there exists a shift-continuous $T_1$-topology $\tau$ on the semigroup $\boldsymbol{B}_{\omega}^{\mathscr{F}_n}$ such that $(\boldsymbol{B}_{\omega}^{\mathscr{F}_n},\tau)$ is an $\omega_{\mathfrak{d}}$-compact non-compact space. Then there exists an open cover $\mathscr{U}=\{U_s\}$ of $(\boldsymbol{B}_{\omega}^{\mathscr{F}_n},\tau)$ which has no a finite subcover. Let $U_{s_0}\in\mathscr{U}$ be such that $U_{s_0}\ni\boldsymbol{0}$. Then $\boldsymbol{B}_{\omega}^{\mathscr{F}_n}\setminus U_{s_0}$ is an infinite countable subset of isolated points of $(\boldsymbol{B}_{\omega}^{\mathscr{F}_n},\tau)$. We enumerate the set $\boldsymbol{B}_{\omega}^{\mathscr{F}_n}\setminus U_{s_0}$ by positive integers, i.e., $\boldsymbol{B}_{\omega}^{\mathscr{F}_n}\setminus U_{s_0}=\{\alpha_i\colon i\in \mathbb{N}\}$. Next we define a map $f\colon (\boldsymbol{B}_{\omega}^{\mathscr{F}_n},\tau)\to \omega_{\mathfrak{d}}$ by the formula
\begin{equation*}
  f(\alpha)=
  \left\{
    \begin{array}{ll}
      0, & \hbox{if~} \alpha\in U_{s_0};\\
      i, & \hbox{if~} \alpha=\alpha_i \hbox{~for some~} i\in\mathbb{N}.
    \end{array}
  \right.
\end{equation*}
By Theorem~\ref{theorem-3.1} the set $U_{s_0}$ is open-and-closed in $(\boldsymbol{B}_{\omega}^{\mathscr{F}_n},\tau)$, and hence so defined map $f$ is continuous. But the image $f(\boldsymbol{B}_{\omega}^{\mathscr{F}_n})$ is not a compact subset of $\omega_{\mathfrak{d}}$, a contradiction. The obtained contradiction implies the implication \eqref{theorem-3.3(4)}$\Rightarrow$\eqref{theorem-3.3(1)}.
\end{proof}

The following proposition states that the semigroup $\boldsymbol{B}_{\omega}^{\mathscr{F}_n}$ has a similar closure in a $T_1$-semitopological semigoup as the bicyclic monoid (see \cite{Bertman-West=1976} and \cite{Eberhart-Selden=1969}), the $\lambda$-polycyclic monoid \cite{Bardyla-Gutik=2016}, graph inverse semigroups \cite{Bardyla=2020, Mesyan-Mitchell-Morayne-Peresse=2016}, McAlister semigroups \cite{Bardyla=2021??}, locally compact semitopological $0$-bisimple inverse $\omega$-semigroups with a compact maximal subgroup \cite{Gutik=2018}, and other discrete semigroups of bijective partial transformations \cite{Chuchman-Gutik=2010, Chuchman-Gutik=2011, Gutik-P.Khylynskyi=2021, Gutik-Mokrytskyi=2020, Gutik-Pozdnyakova=2014, Gutik-Repovs=2011, Gutik-Repovs=2012, Gutik-Savchuk=2017, Gutik-Savchuk=2019}.

\begin{proposition}\label{proposition-3.5}
Let $n$ be a non-negative integer.
If $S$ is a $T_1$-semitopological semigroup which contains $\boldsymbol{B}_{\omega}^{\mathscr{F}_n}$ as a dense proper subsemigroup then $I=(S\setminus \boldsymbol{B}_{\omega}^{\mathscr{F}_n})\cup\{\boldsymbol{0}\}$ is an ideal of $S$.
\end{proposition}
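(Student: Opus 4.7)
The plan is to verify directly that $SI\cup IS\subseteq I$, splitting according to whether the element of $I$ in question is $\boldsymbol{0}$ or lies in $S\setminus\boldsymbol{B}_{\omega}^{\mathscr{F}_n}$. The first (easy) step is to show that $\boldsymbol{0}$ remains an absorbing zero in the larger semigroup $S$: for any $\gamma\in S$, pick by density a net $\gamma_\lambda\in\boldsymbol{B}_{\omega}^{\mathscr{F}_n}$ with $\gamma_\lambda\to\gamma$; since $\gamma_\lambda\boldsymbol{0}=\boldsymbol{0}\gamma_\lambda=\boldsymbol{0}$ for all $\lambda$, separate continuity together with the $T_1$ axiom force $\gamma\boldsymbol{0}=\boldsymbol{0}\gamma=\boldsymbol{0}\in I$.

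The substantive part is to show that for $\alpha\in S\setminus\boldsymbol{B}_{\omega}^{\mathscr{F}_n}$ and arbitrary $\gamma\in S$ we have $\alpha\gamma\in I$; the assertion $\gamma\alpha\in I$ is then mirror-symmetric. Suppose towards a contradiction that $\alpha\gamma=\beta\in\boldsymbol{B}_{\omega}^{\mathscr{F}_n}\setminus\{\boldsymbol{0}\}$; then $\gamma\ne\boldsymbol{0}$ by the first step. Theorem~\ref{theorem-3.1} tells us $\beta$ is an isolated point of $\boldsymbol{B}_{\omega}^{\mathscr{F}_n}$, so we can fix an open $U\subseteq S$ with $U\cap\boldsymbol{B}_{\omega}^{\mathscr{F}_n}=\{\beta\}$. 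I would then record the following pivotal density/$T_1$ observation: if an open $V\subseteq S$ meets $S\setminus\boldsymbol{B}_{\omega}^{\mathscr{F}_n}$, then $V\cap\boldsymbol{B}_{\omega}^{\mathscr{F}_n}$ is infinite; otherwise it would be closed in $S$ (finite sets are closed in $T_1$-spaces), and $V$ minus this finite set would be a nonempty open set disjoint from $\boldsymbol{B}_{\omega}^{\mathscr{F}_n}$, contradicting density.

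By separate continuity of the right shift by $\gamma$, pick an open $V\ni\alpha$ with $V\gamma\subseteq U$; by the observation, $V\cap\boldsymbol{B}_{\omega}^{\mathscr{F}_n}$ is infinite. If $\gamma\in\boldsymbol{B}_{\omega}^{\mathscr{F}_n}$, then every $\chi\in V\cap\boldsymbol{B}_{\omega}^{\mathscr{F}_n}$ satisfies $\chi\gamma\in U\cap\boldsymbol{B}_{\omega}^{\mathscr{F}_n}=\{\beta\}$ (since $\boldsymbol{B}_{\omega}^{\mathscr{F}_n}$ is a subsemigroup), producing infinitely many solutions of $\chi\cdot\gamma=\beta$ in $\boldsymbol{B}_{\omega}^{\mathscr{F}_n}$ and contradicting Lemma~\ref{lemma-2.4}. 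If instead $\gamma\in S\setminus\boldsymbol{B}_{\omega}^{\mathscr{F}_n}$, I would iterate: pick a single $\chi\in(V\cap\boldsymbol{B}_{\omega}^{\mathscr{F}_n})\setminus\{\boldsymbol{0}\}$, use separate continuity of the left shift by this $\chi$ to find an open $W\ni\gamma$ with $\chi W\subseteq U$, and apply the observation again to conclude that $W\cap\boldsymbol{B}_{\omega}^{\mathscr{F}_n}$ is infinite. Each $\delta$ in it satisfies $\chi\delta\in U\cap\boldsymbol{B}_{\omega}^{\mathscr{F}_n}=\{\beta\}$, so $\chi\cdot\delta=\beta$ has infinitely many solutions in $\boldsymbol{B}_{\omega}^{\mathscr{F}_n}$, again contradicting Lemma~\ref{lemma-2.4}.

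The main obstacle is this last case, where neither factor of the product $\alpha\gamma=\beta$ lies in the discrete subsemigroup. The density/$T_1$ trick has to be applied twice in succession — first in the $\alpha$-variable to replace $\alpha$ by an honest element $\chi\in\boldsymbol{B}_{\omega}^{\mathscr{F}_n}$ with $\chi\gamma\in U$, and then in the $\gamma$-variable to manufacture infinitely many partners $\delta$ — so that Lemma~\ref{lemma-2.4}, which only concerns equations living inside $\boldsymbol{B}_{\omega}^{\mathscr{F}_n}$, can finally be brought to bear.
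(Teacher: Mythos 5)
Your proposal is correct and follows essentially the same route as the paper's proof: isolate the putative non-zero product $\beta$ via Theorem~\ref{theorem-3.1}, use density together with the $T_1$ axiom to produce infinitely many elements of $\boldsymbol{B}_{\omega}^{\mathscr{F}_n}$ in the relevant neighbourhoods, and contradict the finiteness of solution sets in Lemma~\ref{lemma-2.4}. The only cosmetic difference is in the case where both factors lie in $S\setminus\boldsymbol{B}_{\omega}^{\mathscr{F}_n}$: the paper reduces it to the already-established mixed case, whereas you rerun the density argument a second time; both are valid.
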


\begin{proof}
Fix an arbitrary element $\nu\in I$. If $\chi\cdot \nu=\zeta\notin I$ for some $\chi\in \boldsymbol{B}_{\omega}^{\mathscr{F}_n}$ then there exists an open neighbourhood $U(\nu)$ of the point $\nu$ in the space $S$ such that $\{\chi\}\cdot U(\nu)=\{\zeta\}\subset \boldsymbol{B}_{\omega}^{\mathscr{F}_n}\setminus\{\boldsymbol{0}\}$. By Lemma~\ref{lemma-2.4} the open neighbourhood $U(\nu)$ should contain finitely many elements of the semigroup $\boldsymbol{B}_{\omega}^{\mathscr{F}_n}$ which contradicts our assumption. Hence $\chi\cdot \nu\in I$ for all $\chi\in \boldsymbol{B}_{\omega}^{\mathscr{F}_n}$ and $\nu\in I$. The proof of the statement that $\nu\cdot \chi\in I$ for all $\chi\in \boldsymbol{B}_{\omega}^{\mathscr{F}_n}$ and $\nu\in I$ is similar.

Suppose to the contrary that $\chi\cdot \nu=\omega\notin I$ for some $\chi,\nu\in I$. Then $\omega\in \boldsymbol{B}_{\omega}^{\mathscr{F}_n}$ and the separate continuity of the semigroup operation in $S$ yields open neighbourhoods $U(\chi)$ and $U(\nu)$ of the points $\chi$ and $\nu$ in the space $S$, respectively, such that $\{\chi\}\cdot U(\nu)=\{\omega\}$ and $U(\chi)\cdot \{\nu\}=\{\omega\}$. Since both neighbourhoods $U(\chi)$ and $U(\nu)$ contain infinitely many elements of the semigroup $\boldsymbol{B}_{\omega}^{\mathscr{F}_n}$,  equalities $\{\chi\}\cdot U(\nu)=\{\omega\}$ and $U(\chi)\cdot \{\nu\}=\{\omega\}$ do not hold, because $\{\chi\}\cdot \left(U(\nu)\cap \boldsymbol{B}_{\omega}^{\mathscr{F}_n}\right)\subseteq I$. The obtained contradiction implies that $\chi\cdot \nu\in I$.
\end{proof}

For any $k=0,1,\ldots,n+1$ we denote
\begin{equation*}
  D_k=\left\{\alpha\in\mathscr{I}_\omega^{n+1}(\overrightarrow{\mathrm{conv}})\colon \operatorname{rank}\alpha=k \right\}.
\end{equation*}
We observe that by Proposition~\ref{proposition-2.1}\eqref{proposition-2.1(9)} and Theorem~\ref{theorem-2.5}, $\boldsymbol{D}=\{D_k\colon k=0,1,\ldots,n+1\}$ is the family of all $\mathscr{D}$-classed of the semigroup $\mathscr{I}_\omega^{n+1}(\overrightarrow{\mathrm{conv}})$.

The following proposition describes the remainder of the semigroup $\boldsymbol{B}_{\omega}^{\mathscr{F}_n}$ in a semitopological semigroup.

\begin{proposition}\label{proposition-3.6}
Let $n$ be a non-negative integer.
If $S$ is a $T_1$-semitopological semigroup which contains $\boldsymbol{B}_{\omega}^{\mathscr{F}_n}$ as a dense proper subsemigroup then $\chi\cdot\chi=\boldsymbol{0}$ for all $\chi\in S\setminus\boldsymbol{B}_{\omega}^{\mathscr{F}_n}$.
\end{proposition}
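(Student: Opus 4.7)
I would prove this by contradiction: suppose $\chi\cdot\chi=\nu\neq\boldsymbol{0}$ for some $\chi\in S\setminus\boldsymbol{B}_{\omega}^{\mathscr{F}_n}$. Proposition~\ref{proposition-3.5} then tells us that $\nu$ lies in the ideal $I=(S\setminus\boldsymbol{B}_{\omega}^{\mathscr{F}_n})\cup\{\boldsymbol{0}\}$, so $\nu\in S\setminus\boldsymbol{B}_{\omega}^{\mathscr{F}_n}$. The overall plan is to use Lemma~\ref{lemma-2.3} together with the density of $\boldsymbol{B}_{\omega}^{\mathscr{F}_n}$ in the $T_1$-space $S$ to ``sandwich'' $\chi$ between elements of $\boldsymbol{B}_{\omega}^{\mathscr{F}_n}$, forcing $\nu$ to be a two-sided annihilator of $S$, and then to rule this out using the inverse structure of $\boldsymbol{B}_{\omega}^{\mathscr{F}_n}$.

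The first step is a \emph{sandwich lemma}: for every $\alpha,\beta\in\boldsymbol{B}_{\omega}^{\mathscr{F}_n}$, the map $\phi_{\alpha,\beta}\colon S\to S$, $s\mapsto\alpha\cdot s\cdot\beta$, is continuous (composition of a continuous left and right multiplication), and Lemma~\ref{lemma-2.3} says its restriction to $\boldsymbol{B}_{\omega}^{\mathscr{F}_n}$ takes values in the finite set $\alpha\cdot\boldsymbol{B}_{\omega}^{\mathscr{F}_n}\cdot\beta$. Since finite subsets of a $T_1$-space are closed, $\phi_{\alpha,\beta}^{-1}(\alpha\cdot\boldsymbol{B}_{\omega}^{\mathscr{F}_n}\cdot\beta)$ is a closed subset of $S$ containing the dense subsemigroup $\boldsymbol{B}_{\omega}^{\mathscr{F}_n}$, hence equals $S$. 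Evaluating at $s=\chi$ gives $\alpha\chi\beta\in\boldsymbol{B}_{\omega}^{\mathscr{F}_n}$, and since $I$ is an ideal we also have $\alpha\chi\beta\in I$; thus $\alpha\chi\beta\in\boldsymbol{B}_{\omega}^{\mathscr{F}_n}\cap I=\{\boldsymbol{0}\}$, i.e.\ $\alpha\chi\beta=\boldsymbol{0}$ for every $\alpha,\beta\in\boldsymbol{B}_{\omega}^{\mathscr{F}_n}$. I would then extend this identity by two successive applications of separate continuity and density: fixing $\alpha$, the continuous map $t\mapsto\alpha\chi t$ vanishes on the dense set $\boldsymbol{B}_{\omega}^{\mathscr{F}_n}$, and since $\{\boldsymbol{0}\}$ is closed, it vanishes on all of $S$; a symmetric extension in the $\alpha$-variable then yields $s\cdot\chi\cdot t=\boldsymbol{0}$ for every $s,t\in S$. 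Taking $s=\chi$ (respectively $t=\chi$) and pushing the outer variable through the same closed-preimage/density argument shows that $\nu=\chi^{2}$ satisfies $s\cdot\nu=\nu\cdot s=\boldsymbol{0}$ for all $s\in S$, i.e.\ $\nu$ is a two-sided annihilator of $S$.

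The main obstacle is the closing step: showing that a \emph{non-zero} element of $S\setminus\boldsymbol{B}_{\omega}^{\mathscr{F}_n}$ cannot be a two-sided annihilator. Inside $\boldsymbol{B}_{\omega}^{\mathscr{F}_n}$ only $\boldsymbol{0}$ annihilates everything, because for any $\alpha\in\boldsymbol{B}_{\omega}^{\mathscr{F}_n}\setminus\{\boldsymbol{0}\}$ we have $\alpha\cdot\alpha^{-1}\cdot\alpha=\alpha\neq\boldsymbol{0}$. To transplant this to $S$, I would choose a net $\gamma_i\to\nu$ in $\boldsymbol{B}_{\omega}^{\mathscr{F}_n}\setminus\{\boldsymbol{0}\}$ (available by density and the $T_1$-property), use continuity of right multiplication by $\gamma_j^{-1}$ to obtain $\gamma_i\cdot\gamma_j^{-1}\to\nu\cdot\gamma_j^{-1}=\boldsymbol{0}$ for each fixed $j$, and then combine the isolation of non-zero elements in the subspace topology on $\boldsymbol{B}_{\omega}^{\mathscr{F}_n}$ (Theorem~\ref{theorem-3.1}) with the finiteness assertions of Lemmas~\ref{lemma-2.2} and~\ref{lemma-2.4} to force $\nu$ into $\boldsymbol{B}_{\omega}^{\mathscr{F}_n}$, contradicting $\nu\in S\setminus\boldsymbol{B}_{\omega}^{\mathscr{F}_n}$.
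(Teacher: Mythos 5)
Your opening moves are sound, and they give a cleaner route to the partial information they do deliver than the paper's own argument: the sandwich step correctly yields $\alpha\cdot\chi\cdot\beta\in\bigl(\alpha\cdot\boldsymbol{B}_{\omega}^{\mathscr{F}_n}\cdot\beta\bigr)\cap I=\{\boldsymbol{0}\}$ for all $\alpha,\beta\in\boldsymbol{B}_{\omega}^{\mathscr{F}_n}$, and the two density extensions legitimately upgrade this to $s\cdot\chi\cdot t=\boldsymbol{0}$ for all $s,t\in S$, whence $\nu=\chi\cdot\chi$ is a two-sided annihilator of $S$. The fatal problem is the closing step. The properties of $\nu$ that you retain --- a non-zero two-sided annihilator lying in $S\setminus\boldsymbol{B}_{\omega}^{\mathscr{F}_n}$, with $\boldsymbol{B}_{\omega}^{\mathscr{F}_n}$ dense in the $T_1$-semitopological semigroup $S$ --- are jointly \emph{consistent}, so no contradiction can be extracted from them alone. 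The paper itself exhibits exactly this configuration in the proof of Theorem~\ref{theorem-3.10}: there $S=\mathscr{I}_\omega^{n+1}(\overrightarrow{\mathrm{conv}})\cup\{\chi\}$ is a Hausdorff topological semigroup containing $\mathscr{I}_\omega^{n+1}(\overrightarrow{\mathrm{conv}})$ as a dense proper subsemigroup, and the adjoined point is a non-zero two-sided annihilator of all of $S$. Your net argument also collapses on that example: for $\gamma_i$ running through a basic neighbourhood of the adjoined point one has $\gamma_i\cdot\gamma_j^{-1}=\boldsymbol{0}$ for all $i\neq j$, so the convergence $\gamma_i\cdot\gamma_j^{-1}\to\boldsymbol{0}$ holds trivially and forces nothing; in particular it cannot ``force $\nu$ into $\boldsymbol{B}_{\omega}^{\mathscr{F}_n}$.''

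What is missing is any use of the relation $\nu=\chi\cdot\chi$ beyond the identity $s\cdot\chi\cdot t=\boldsymbol{0}$, which never specializes to $\chi\cdot\chi$ itself because $\boldsymbol{B}_{\omega}^{\mathscr{F}_n}$ has no identity. The point that has to be proved, and that the paper does prove, is that \emph{every} open neighbourhood $V(\chi)$ of $\chi$ contains elements $\beta,\gamma$ of $\boldsymbol{B}_{\omega}^{\mathscr{F}_n}$ with $\beta\cdot\chi=\boldsymbol{0}$ and $\chi\cdot\gamma=\boldsymbol{0}$. Granting this, if $\chi\cdot\chi\neq\boldsymbol{0}$ one chooses by the $T_1$ axiom an open set $W\ni\chi\cdot\chi$ with $\boldsymbol{0}\notin W$, and separate continuity of $s\mapsto s\cdot\chi$ gives a neighbourhood $V(\chi)$ with $V(\chi)\cdot\chi\subseteq W$, contradicting $\boldsymbol{0}\in V(\chi)\cdot\chi$. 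The paper obtains the key neighbourhood property from the concrete structure of $\mathscr{I}_\omega^{n+1}(\overrightarrow{\mathrm{conv}})$: there are only finitely many $\mathscr{D}$-classes, so some class meets $V(\chi)$ in an infinite set, which must contain partial bijections with infinitely many distinct domains or ranges, and such elements annihilate one another in pairs. Your argument contains no substitute for this structural step, so the proof as proposed does not close.
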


\begin{proof}
We observe that $\boldsymbol{0}$ is zero of the semigroup $S$ by Lemma 4.4 of \cite{Gutik=2018}.

We shall prove the statement of the proposition for the semigroup $\mathscr{I}_\omega^{n+1}(\overrightarrow{\mathrm{conv}})$ which by Theorem~\ref{theorem-2.5} is isomorphic to the semigroup $\boldsymbol{B}_{\omega}^{\mathscr{F}_n}$.

Fix an arbitrary $\chi\in S\setminus\mathscr{I}_\omega^{n+1}(\overrightarrow{\mathrm{conv}})$ and any open neighbourhood $U(\chi)$ of the point $\chi$ in $S$. Since $\boldsymbol{B}_{\omega}^{\mathscr{F}_n}$ is a dense proper subsemigroup of $S$ the set $U(\chi)\cap (\mathscr{I}_\omega^{n+1}(\overrightarrow{\mathrm{conv}})\setminus \{\boldsymbol{0}\})$ is infinite. Since the family $\boldsymbol{D}$ is finite there exists $i=1,\ldots,n+1$ such that the set $U(\chi)\cap D_i$ is infinite. This and the definition of the semigroup $\mathscr{I}_\omega^{n+1}(\overrightarrow{\mathrm{conv}})$ imply that at least one of the families
\begin{equation*}
  \operatorname{\mathfrak{dom}}D_iU(\chi)=\left\{\operatorname{dom}\alpha\colon \alpha\in U(\chi)\cap D_i \right\} \qquad \hbox{or} \qquad \operatorname{\mathfrak{ran}}D_iU(\chi)=\left\{\operatorname{ran}\alpha\colon \alpha\in U(\chi)\cap D_i \right\}
\end{equation*}
has infinitely many members. Assume that the family $\operatorname{\mathfrak{dom}}D_iU(\chi)$ is infinite. Then the definition of the semigroup operation on $\mathscr{I}_\omega^{n+1}(\overrightarrow{\mathrm{conv}})$ implies that there exist infinitely many $\beta\in U(\chi)\cap \mathscr{I}_\omega^{n+1}(\overrightarrow{\mathrm{conv}})$ such that $\boldsymbol{0}\in\beta\cdot U(\chi)$, and since $S$ is a $T_1$-space we have that $\beta\cdot\chi=\boldsymbol{0}$ for so elements $\beta$. Also, the infiniteness of $\operatorname{\mathfrak{dom}}D_iU(\chi)$ and the semigroup operation of $\mathscr{I}_\omega^{n+1}(\overrightarrow{\mathrm{conv}})$ imply the existence infinitely many $\gamma\in U(\chi)\cap \mathscr{I}_\omega^{n+1}(\overrightarrow{\mathrm{conv}})$ such that $\boldsymbol{0}\in U(\chi)\cdot\gamma$, and since $S$ is a $T_1$-space we have that $\chi\cdot\gamma=\boldsymbol{0}$ for so elements $\gamma$. In the case when the family $\operatorname{\mathfrak{ran}}D_iU(\chi)$ is infinite similarly we obtain that there exist infinitely many $\beta,\gamma\in U(\chi)\cap \mathscr{I}_\omega^{n+1}(\overrightarrow{\mathrm{conv}})$ such that $\beta\cdot\chi=\boldsymbol{0}$ and $\chi\cdot\gamma=\boldsymbol{0}$.

Thus we show that $\boldsymbol{0}\in V(\chi)\cdot\chi$ and  $\boldsymbol{0}\in\chi\cdot V(\chi)$ for any open neighbourhood $V(\chi)$ of the point $\chi$ in $S$. Since $S$ is a $T_1$-space this implies the required equality $\chi\cdot\chi=\boldsymbol{0}$ for all $\chi\in S\setminus\boldsymbol{B}_{\omega}^{\mathscr{F}_n}$.
\end{proof}

Let $\mathfrak{STSG}$ be a class of semitopological semigroups. A semigroup $S\in\mathfrak{STSG}$ is called {\it $H$-closed in} $\mathfrak{STSG}$, if $S$ is a closed subsemigroup of any topological semigroup $T\in\mathfrak{STSG}$ which contains $S$ both as a subsemigroup and as a topological space. $H$-closed
topological semigroups were introduced by Stepp in \cite{Stepp=1969}, and there they were called {\it maximal semigroups}. A semitopological semigroup $S\in\mathfrak{STSG}$ is called {\it absolutely $H$-closed in the class} $\mathfrak{STSG}$, if any continuous homomorphic image of $S$ into $T\in\mathfrak{STSG}$ is $H$-closed in $\mathfrak{STSG}$. An algebraic semigroup $S$ is called:
\begin{itemize}
  \item {\it algebraically complete in} $\mathfrak{STSG}$, if $S$ with any Hausdorff topology $\tau$ such that $(S,\tau)\in\mathfrak{STSG}$ is $H$-closed in $\mathfrak{STSG}_0$;
  \item {\it algebraically $h$-complete in} $\mathfrak{STSG}$, if $S$ with discrete topology ${\tau_\mathfrak{d}}$ is absolutely $H$-closed in $\mathfrak{STSG}$ and $(S,{\tau_\mathfrak{d}})\in\mathfrak{STSG}$.
\end{itemize}
Absolutely $H$-closed topological semigroups and algebraically $h$-complete semigroups were introduced by Stepp in~\cite{Stepp=1975}, and there they were called {\it absolutely maximal} and {\it algebraic maximal}, respectively. Other distinct types of completeness of (semi)topological semigroups were studied by Banakh and Bardyla (see \cite{Bardyla-Bardyla=2019, Bardyla-Bardyla=2019a, Bardyla-Bardyla=2020, Bardyla-Bardyla=2021, Bardyla-Bardyla=2022, Bardyla-Bardyla-Ravsky=2019}).

Proposition~10 of \cite{Gutik-Lawson-Repov=2009} and Proposition~\ref{proposition-2.10} imply the following theorem.

\begin{theorem}\label{theorem-3.7}
For any $n\in\omega$ the semigroup $\boldsymbol{B}_{\omega}^{\mathscr{F}_n}$ is algebraically complete in the class of Hausdorff semitopological inverse semigroups with continuous inversion, and hence in the class of Hausdorff topological inverse semigroups.
\end{theorem}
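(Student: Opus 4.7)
The plan is to combine the algebraic data uncovered in Section~2 with the abstract completeness criterion for semigroups admitting a tight ideal series proved in \cite{Gutik-Lawson-Repov=2009}. Concretely, Proposition~\ref{proposition-2.9} exhibits the tight ideal series
\begin{equation*}
\{\boldsymbol{0}\}\subseteq \boldsymbol{B}_{\omega}^{\mathscr{F}_0}\subseteq \boldsymbol{B}_{\omega}^{\mathscr{F}_1}\subseteq\cdots\subseteq \boldsymbol{B}_{\omega}^{\mathscr{F}_n},
\end{equation*}
whose successive differences are $\omega$-unstable by Lemma~\ref{lemma-2.7}; and Proposition~\ref{proposition-2.10} supplies, for every $p=0,\ldots,n-1$, a semigroup homomorphism $\mathfrak{h}_p\colon\boldsymbol{B}_{\omega}^{\mathscr{F}_n}\to \boldsymbol{B}_{\omega}^{\mathscr{F}_{n-p-1}}$ which is a retraction onto the corresponding ideal and annihilates $\boldsymbol{B}_{\omega}^{\mathscr{F}_p}$. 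These are precisely the two ingredients that the abstract criterion consumes.

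With those structural facts in hand I would invoke Proposition~10 of \cite{Gutik-Lawson-Repov=2009}: any inverse semigroup whose multiplicative structure is constrained by a tight ideal series together with the attendant retractions is algebraically complete in the class of Hausdorff semitopological inverse semigroups with continuous inversion. Applied to $\boldsymbol{B}_{\omega}^{\mathscr{F}_n}$ this yields the first assertion. The second assertion then follows at no extra cost, since every Hausdorff topological inverse semigroup belongs a fortiori to the class of Hausdorff semitopological inverse semigroups with continuous inversion, so any extension in the smaller class is automatically an extension in the larger one and inherits the corresponding $H$-closedness.

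The main obstacle I anticipate is purely a matching exercise: verifying that the homomorphisms $\mathfrak{h}_p$ and the tight ideal series of Proposition~\ref{proposition-2.9} literally fulfil the hypotheses of Proposition~10 of \cite{Gutik-Lawson-Repov=2009}. Should one prefer a self-contained route, the plan would be to suppose $\boldsymbol{B}_{\omega}^{\mathscr{F}_n}$ sits densely as a proper subsemigroup of some Hausdorff semitopological inverse semigroup $S$ with continuous inversion, apply Propositions~\ref{proposition-3.5} and~\ref{proposition-3.6} to a prospective limit point $\chi\in S\setminus\boldsymbol{B}_{\omega}^{\mathscr{F}_n}$ to obtain $\chi\cdot\chi=\boldsymbol{0}$, observe via Proposition~\ref{proposition-2.1}\eqref{proposition-2.1(1)} and continuous inversion that $\chi^{-1}$ likewise lies in the ideal $(S\setminus\boldsymbol{B}_{\omega}^{\mathscr{F}_n})\cup\{\boldsymbol{0}\}$, and then descend through the retractions $\mathfrak{h}_p$ to the base layer $\boldsymbol{B}_{\omega}^{\mathscr{F}_0}\cong\mathscr{I}_{\omega}^1$, an infinite countable semigroup of matrix units, for which $H$-closedness in the class in question is classical; the contradiction would complete the proof.
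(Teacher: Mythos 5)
Your proposal is correct and follows essentially the same route as the paper, which likewise deduces the theorem by feeding the structural results of Section~2 (the tight ideal series of Proposition~\ref{proposition-2.9} built from the $\omega$-unstable layers of Lemma~\ref{lemma-2.7}, together with Proposition~\ref{proposition-2.10}) into Proposition~10 of \cite{Gutik-Lawson-Repov=2009}, and your observation that completeness in the class of Hausdorff semitopological inverse semigroups with continuous inversion transfers to the subclass of Hausdorff topological inverse semigroups is exactly the paper's ``and hence'' step.
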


\begin{theorem}\label{theorem-3.8}
Let $n$ be a non-negative integer.
If $(\boldsymbol{B}_{\omega}^{\mathscr{F}_n},\tau)$ is a Hausdorff topological semigroup with the compact band then $(\boldsymbol{B}_{\omega}^{\mathscr{F}_n},\tau)$ is $H$-closed in the class of Hausdorff topological semigroups.
\end{theorem}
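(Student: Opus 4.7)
The plan is to argue by contradiction. Suppose $(\boldsymbol{B}_{\omega}^{\mathscr{F}_n},\tau)$ is not $H$-closed; then it embeds as a non-closed subsemigroup into some Hausdorff topological semigroup $T$. Passing to the closure of $\boldsymbol{B}_{\omega}^{\mathscr{F}_n}$ in $T$, we may assume that $\boldsymbol{B}_{\omega}^{\mathscr{F}_n}$ is a dense proper subsemigroup of a Hausdorff topological semigroup $S$. Pick any $\chi\in S\setminus\boldsymbol{B}_{\omega}^{\mathscr{F}_n}$. Since $\boldsymbol{0}\in\boldsymbol{B}_{\omega}^{\mathscr{F}_n}$ we have $\chi\neq\boldsymbol{0}$, and by Hausdorffness and density there is a net $\{\alpha_\mu=(i_\mu,j_\mu,[0;k_\mu])\}_{\mu\in M}$ of non-zero elements of $\boldsymbol{B}_{\omega}^{\mathscr{F}_n}$ converging to $\chi$ in $S$.

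The central idea is to trap $\chi$ between convergent subnets of the source and target idempotents of $\alpha_\mu$. Put $e_\mu=(i_\mu,i_\mu,[0;k_\mu])$ and $f_\mu=(j_\mu,j_\mu,[0;k_\mu])$; both lie in the band $E=E(\boldsymbol{B}_{\omega}^{\mathscr{F}_n})$ and by the semigroup operation on $\boldsymbol{B}_{\omega}^{\mathscr{F}_n}$ satisfy $e_\mu\cdot\alpha_\mu=\alpha_\mu=\alpha_\mu\cdot f_\mu$. Since $E$ is compact in $\tau$ and $S$ is Hausdorff, $E$ is compact and closed in $S$, so after passing twice to subnets we may assume that $e_\mu\to e\in E$ and $f_\mu\to f\in E$. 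Joint continuity of the multiplication in the topological semigroup $S$ then yields
\begin{equation*}
e\cdot\chi=\lim_\mu(e_\mu\cdot\alpha_\mu)=\lim_\mu\alpha_\mu=\chi \qquad\text{and}\qquad \chi\cdot f=\lim_\mu(\alpha_\mu\cdot f_\mu)=\chi.
\end{equation*}

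I would then split into cases. If $e=\boldsymbol{0}$, then $\chi=e\chi=\boldsymbol{0}$, contradicting $\chi\neq\boldsymbol{0}$; the case $f=\boldsymbol{0}$ is symmetric. Otherwise $e$ and $f$ are non-zero idempotents, hence isolated points of $(\boldsymbol{B}_{\omega}^{\mathscr{F}_n},\tau)$ by Theorem~\ref{theorem-3.1}. Consequently there exist open sets $U_e,U_f\subseteq S$ with $U_e\cap\boldsymbol{B}_{\omega}^{\mathscr{F}_n}=\{e\}$ and $U_f\cap\boldsymbol{B}_{\omega}^{\mathscr{F}_n}=\{f\}$, so $e_\mu=e$ and $f_\mu=f$ eventually. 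Writing $e=(i,i,[0;k])$ and $f=(j,j,[0;k'])$, this forces $i_\mu=i$, $j_\mu=j$, and $k_\mu=k=k'$ eventually, i.e.\ the net $\alpha_\mu$ is eventually the constant $(i,j,[0;k])\in\boldsymbol{B}_{\omega}^{\mathscr{F}_n}$; by Hausdorffness of $S$ this must equal $\chi$, contradicting $\chi\notin\boldsymbol{B}_{\omega}^{\mathscr{F}_n}$.

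The main delicacy I anticipate is the passage from the pointwise identities $e_\mu\alpha_\mu=\alpha_\mu$ and $\alpha_\mu f_\mu=\alpha_\mu$ to their limit versions $e\chi=\chi$ and $\chi f=\chi$: this relies crucially on \emph{joint} continuity of the multiplication, which is precisely why the theorem is stated in the class of (Hausdorff) topological, rather than merely semitopological, semigroups, and compactness of the band is the right hypothesis to make the subnet extraction work uniformly in $\mu$. Everything else is a straightforward combination of Theorem~\ref{theorem-3.1} with the algebraic structure of $\boldsymbol{B}_{\omega}^{\mathscr{F}_n}$.
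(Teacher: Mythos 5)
Your argument is correct, but it is a genuinely different proof from the one in the paper. You work with nets: you extract convergent subnets of the source and range idempotents $e_\mu,f_\mu$ inside the compact band, use \emph{joint} continuity to pass the identities $e_\mu\alpha_\mu=\alpha_\mu=\alpha_\mu f_\mu$ to the limit, and then exploit the isolatedness of non-zero idempotents (Theorem~\ref{theorem-3.1}) to force the net to be eventually constant, which is absurd. The paper instead argues with open sets: it first invokes Proposition~\ref{proposition-3.6} to get $\chi\cdot\chi=\boldsymbol{0}$ and Lemma~4.4 of \cite{Gutik=2018} to see that $\boldsymbol{0}$ is the zero of the ambient semigroup, takes disjoint neighbourhoods $U(\chi)$ and $U(\boldsymbol{0})$, observes that compactness plus isolatedness makes $E(\boldsymbol{B}_{\omega}^{\mathscr{F}_n})\setminus V(\boldsymbol{0})$ finite, and then finds some $(i,j,[0;k])\in V(\chi)$ whose left or right idempotent lies in $V(\boldsymbol{0})$, so that $(V(\boldsymbol{0})\cdot V(\chi))\cap V(\chi)\neq\varnothing$, contradicting $V(\boldsymbol{0})\cdot V(\chi)\subseteq U(\boldsymbol{0})$. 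Your route is arguably cleaner: it needs neither Proposition~\ref{proposition-3.6} nor the external lemma, and it isolates exactly where joint continuity and compactness of the band enter. Two small points you should make explicit: in the case $e=\boldsymbol{0}$ you use $\boldsymbol{0}\cdot\chi=\boldsymbol{0}$, which follows by applying the (even just separately) continuous left translation by $\boldsymbol{0}$ to the net $\alpha_\mu\to\chi$, since $\boldsymbol{0}\cdot\alpha_\mu=\boldsymbol{0}$ for all $\mu$; and the existence of a net of \emph{non-zero} elements converging to $\chi$ uses that $\{\boldsymbol{0}\}$ is closed, so $\chi\in\overline{\boldsymbol{B}_{\omega}^{\mathscr{F}_n}\setminus\{\boldsymbol{0}\}}$. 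Neither is a real gap.
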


\begin{proof}
Suppose to the contrary that there exists a Hausdorff topological semigroup $T$ which contains $(\boldsymbol{B}_{\omega}^{\mathscr{F}_n},\tau)$ as a non-closed subsemigroup. Since the closure of a subsemigroup of a topological semigroup $S$ is a subsemigroup of $S$ (see \cite[p.~9]{Carruth-Hildebrant-Koch-1983}), without loss of generality we can assume that $\boldsymbol{B}_{\omega}^{\mathscr{F}_n}$ is a dense subsemigroup of $T$ and $T\setminus\boldsymbol{B}_{\omega}^{\mathscr{F}_n}\neq\varnothing$. Let $\chi\in T\setminus\boldsymbol{B}_{\omega}^{\mathscr{F}_n}$. Then $\boldsymbol{0}$ is the zero of the semigroup $T$ by Lemma 4.4 of \cite{Gutik=2018}, and $\chi\cdot\chi=\boldsymbol{0}$ by Proposition~\ref{proposition-3.6}.

Since $\boldsymbol{0}\cdot\chi=\chi\cdot\boldsymbol{0}=\boldsymbol{0}$ and $T$ is a Hausdorff topological semigroup, for any disjoint open neighbourhoods $U(\chi)$ and $U(\boldsymbol{0})$ of $\chi$ and $\boldsymbol{0}$ in $T$,
respectively, there exist open neighbourhoods $V(\chi)\subseteq U(\chi)$ and $V(\boldsymbol{0})\subseteq U(\boldsymbol{0})$ of $\chi$ and $\boldsymbol{0}$ in $T$, respectively, such that
\begin{equation*}
  V(\boldsymbol{0})\cdot V(\chi)\subseteq U(\boldsymbol{0}) \qquad \hbox{and} \qquad V(\chi)\cdot V(\boldsymbol{0})\subseteq U(\boldsymbol{0}).
\end{equation*}
By Theorem~\ref{theorem-3.1} every non-zero element of $\boldsymbol{B}_{\omega}^{\mathscr{F}_n}$ is an isolated point in $(\boldsymbol{B}_{\omega}^{\mathscr{F}_n},\tau)$ and by Corollary~3.3.11 of \cite{Engelking-1989} it is an isolated point of $T$, and hence the set $E(\boldsymbol{B}_{\omega}^{\mathscr{F}_n})\setminus V(\boldsymbol{0})$ is finite. Also Hausdorffness and compactness of $E(\boldsymbol{B}_{\omega}^{\mathscr{F}_n})$ imply that without loss of generality we may assume that $V(\chi)\cap E(\boldsymbol{B}_{\omega}^{\mathscr{F}_n})=\varnothing$. Since the neighbourhood $V(\chi)$ contains infinitely many elements of the semigroup $\boldsymbol{B}_{\omega}^{\mathscr{F}_n}$ and the set
$E(\boldsymbol{B}_{\omega}^{\mathscr{F}_n})\setminus V(\boldsymbol{0})$ is finite, there exists $(i,j,[0;k])\in V(\chi)$ such that either $(i,i,[0;k])\in V(\boldsymbol{0})$ or $(j,j,[0;k])\in V(\boldsymbol{0})$. Therefore, we have that at least one of the following conditions holds:
\begin{equation*}
  (V(\boldsymbol{0})\cdot V(\chi))\cap V(\chi)\neq \varnothing \qquad \hbox{and} \qquad (V(\chi)\cdot V(\boldsymbol{0}))\cap V(\chi)\neq \varnothing.
\end{equation*}
Every of the above conditions contradicts the assumption that $U(\chi)$ and $U(\boldsymbol{0})$ are disjoint open neighbourhoods of $\chi$ and $\boldsymbol{0}$ in $T$. The obtained contradiction implies the statement of the theorem.
\end{proof}

Since compactness preserves by continuous maps Theorems~\ref{theorem-2.13} and~\ref{theorem-3.8} imply

\begin{corollary}\label{corollary-3.9}
Let $n$ be a non-negative integer.
If $(\boldsymbol{B}_{\omega}^{\mathscr{F}_n},\tau)$ is a Hausdorff topological semigroup with the compact band then $(\boldsymbol{B}_{\omega}^{\mathscr{F}_n},\tau)$ is absolutely $H$-closed in the class of Hausdorff topological semigroups.
\end{corollary}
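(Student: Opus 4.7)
The plan is to reduce the statement directly to Theorem~\ref{theorem-3.8} via the classification of homomorphic images given by Theorem~\ref{theorem-2.13}. Fix a continuous homomorphism $\mathfrak{h}\colon(\boldsymbol{B}_{\omega}^{\mathscr{F}_n},\tau)\to T$ into a Hausdorff topological semigroup $T$; we must show that the image $S=\mathfrak{h}(\boldsymbol{B}_{\omega}^{\mathscr{F}_n})$, equipped with the subspace topology $\tau_S$ inherited from $T$, is $H$-closed in the class of Hausdorff topological semigroups. First I would dispose of the trivial case: by Theorem~\ref{theorem-2.13}, $S$ is either a singleton or algebraically isomorphic to $\boldsymbol{B}_{\omega}^{\mathscr{F}_k}$ for some $k\in\{0,1,\ldots,n\}$. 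A singleton Hausdorff space is compact and hence $H$-closed in the class of Hausdorff topological semigroups.

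For the nontrivial case, let $\mathfrak{j}\colon \boldsymbol{B}_{\omega}^{\mathscr{F}_k}\to S$ be the isomorphism provided by Theorem~\ref{theorem-2.13}, and transport $\tau_S$ along $\mathfrak{j}^{-1}$ to obtain a topology $\tau_k$ on $\boldsymbol{B}_{\omega}^{\mathscr{F}_k}$. Since $T$ is Hausdorff and $(\boldsymbol{B}_{\omega}^{\mathscr{F}_k},\tau_k)$ embeds topologically as a subsemigroup of $T$, it is a Hausdorff topological semigroup. To apply Theorem~\ref{theorem-3.8} to $(\boldsymbol{B}_{\omega}^{\mathscr{F}_k},\tau_k)$ it remains to check that its band is compact.

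This is where compactness-preservation enters. The band $E(\boldsymbol{B}_{\omega}^{\mathscr{F}_n})$ is compact in $\tau$ by hypothesis, and $\mathfrak{h}$ is continuous, so $\mathfrak{h}\!\left(E(\boldsymbol{B}_{\omega}^{\mathscr{F}_n})\right)$ is compact in $T$. Because $\mathfrak{h}$ is a semigroup homomorphism it sends idempotents to idempotents, giving the inclusion $\mathfrak{h}\!\left(E(\boldsymbol{B}_{\omega}^{\mathscr{F}_n})\right)\subseteq E(S)$; conversely, by Theorem~\ref{theorem-2.13} every congruence class of $\mathfrak{C}_\mathfrak{h}$ is a Rees class, so the restriction of $\mathfrak{h}$ to $E(\boldsymbol{B}_{\omega}^{\mathscr{F}_n})$ is surjective onto $E(S)$. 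Hence $E(S)=\mathfrak{h}\!\left(E(\boldsymbol{B}_{\omega}^{\mathscr{F}_n})\right)$ is compact, and transporting back, $E(\boldsymbol{B}_{\omega}^{\mathscr{F}_k})$ is compact in $\tau_k$.

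Thus $(\boldsymbol{B}_{\omega}^{\mathscr{F}_k},\tau_k)$ satisfies the hypotheses of Theorem~\ref{theorem-3.8}, so it is $H$-closed in the class of Hausdorff topological semigroups; equivalently, $(S,\tau_S)$ is $H$-closed. As $\mathfrak{h}$ was an arbitrary continuous homomorphism into a Hausdorff topological semigroup, $(\boldsymbol{B}_{\omega}^{\mathscr{F}_n},\tau)$ is absolutely $H$-closed. The only subtlety in the plan is verifying that $\mathfrak{h}(E(\boldsymbol{B}_{\omega}^{\mathscr{F}_n}))$ coincides with $E(S)$ rather than merely being contained in it, and this is precisely what the Rees-congruence description in Theorem~\ref{theorem-2.13} supplies.
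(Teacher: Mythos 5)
Your proposal is correct and follows exactly the route the paper intends: the paper derives Corollary~\ref{corollary-3.9} in one line from Theorem~\ref{theorem-2.13}, Theorem~\ref{theorem-3.8}, and the fact that continuous images of compact sets are compact, and your argument simply fills in those details (including the worthwhile check that $E(S)=\mathfrak{h}(E(\boldsymbol{B}_{\omega}^{\mathscr{F}_n}))$, which also follows from idempotents lifting along homomorphisms of inverse semigroups). No gaps.
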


\begin{theorem}\label{theorem-3.10}
Let $n$ be a non-negative integer and $(\boldsymbol{B}_{\omega}^{\mathscr{F}_n},\tau)$ be a Hausdorff topological inverse semigroup.
If $(\boldsymbol{B}_{\omega}^{\mathscr{F}_n},\tau)$ is $H$-closed in the class of Hausdorff topological semigroups then its band $E(\boldsymbol{B}_{\omega}^{\mathscr{F}_n})$ is compact.
\end{theorem}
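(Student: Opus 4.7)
The plan is to prove the contrapositive: if $E:=E(\boldsymbol{B}_{\omega}^{\mathscr{F}_n})$ is not compact, then $(\boldsymbol{B}_{\omega}^{\mathscr{F}_n},\tau)$ fails to be $H$-closed in the class of Hausdorff topological semigroups. The construction should mirror, as a converse, the obstruction argument of Theorem~\ref{theorem-3.8}. Since every non-zero idempotent is $\tau$-isolated by Theorem~\ref{theorem-3.1}, non-compactness of~$E$ supplies a $\tau$-open neighbourhood $U_{\boldsymbol{0}}$ of~$\boldsymbol{0}$ with $E\setminus U_{\boldsymbol{0}}$ infinite. Applying the pigeonhole principle to $k\in\{0,\ldots,n\}$ and then thinning indices, I would extract idempotents $e_m=(i_m,i_m,[0;k_0])\in E\setminus U_{\boldsymbol{0}}$ with $i_{m+1}-i_m>n$, so that the multiplication formula of $\boldsymbol{B}_{\omega}^{\mathscr{F}_n}$ forces $e_m\cdot e_{m'}=\boldsymbol{0}$ for $m\neq m'$.

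From these idempotents I would construct a genuinely null family of non-idempotents. Put $s_m:=(i_{2m-1},i_{2m},[0;k_0])$; a direct check in the multiplication formula shows $s_m\cdot s_{m'}=\boldsymbol{0}$ for all $m,m'\in\omega$, including $m=m'$, so each $s_m$ is nilpotent of index~$2$. Since $(\boldsymbol{B}_{\omega}^{\mathscr{F}_n},\tau)$ is a \emph{topological inverse} semigroup, the map $s\mapsto s\cdot s^{-1}$ is continuous, and $s_m\cdot s_m^{-1}=e_{i_{2m-1}}$; hence convergence of any subnet of $N:=\{s_m\colon m\in\omega\}$ to~$\boldsymbol{0}$ would force the corresponding subnet of~$\{e_m\}$ to converge to~$\boldsymbol{0}$, contradicting $\boldsymbol{0}\notin\overline{\{e_m\}}$. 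Thus $N$ is closed-discrete in~$\tau$ and some $\tau$-open neighbourhood of~$\boldsymbol{0}$ avoids~$N$.

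Then I would build the extension $T:=\boldsymbol{B}_{\omega}^{\mathscr{F}_n}\sqcup\{\chi\}$, declare $\chi\cdot t=t\cdot\chi=\boldsymbol{0}$ for every $t\in T$, keep the topology~$\tau$ on~$\boldsymbol{B}_{\omega}^{\mathscr{F}_n}$, and take as a neighbourhood base at~$\chi$ the sets $\{\chi\}\cup(N\setminus K)$ for finite $K\subseteq N$. Hausdorffness follows from the closed-discrete structure of~$N$ and the isolation of all other non-zero points. The point~$\chi$ lies in the $T$-closure of~$N\subseteq\boldsymbol{B}_{\omega}^{\mathscr{F}_n}$, so $\boldsymbol{B}_{\omega}^{\mathscr{F}_n}$ is dense but not closed in~$T$; the proof reduces to verifying that $T$ is a Hausdorff topological semigroup, which will contradict the assumed $H$-closedness.

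The final step is the main technical obstacle. At~$(\chi,\chi)$ the null property gives $V\cdot V=\{\boldsymbol{0}\}$ for every basic neighbourhood~$V$ of~$\chi$, and at $(\chi,t)$ with $t\neq\boldsymbol{0}$ (and symmetrically) Lemma~\ref{lemma-2.3} restricts the $s\in N$ with $s\cdot t\neq\boldsymbol{0}$ to finitely many, absorbed by enlarging~$K$. The delicate case is joint continuity at $(\chi,\boldsymbol{0})$ and $(\boldsymbol{0},\chi)$: given a $\tau$-neighbourhood $W\ni\boldsymbol{0}$, one must exhibit a $\tau$-neighbourhood $U$ of~$\boldsymbol{0}$ and a finite $K\subseteq N$ with $(N\setminus K)\cdot U\subseteq W$. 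I expect this to follow from the bounded-window behaviour of the operation---the product $s_m\cdot(a,b,[0;p])$ vanishes unless $a\in[i_{2m}-p,i_{2m}+k_0]$---combined with the joint continuity of multiplication already present in~$\tau$, after thinning $(i_m)$ so that these windows are mutually disjoint. This careful case analysis, which leans on the topological inverse structure of~$\tau$ rather than on compactness, is the technical heart of the proof and is the converse counterpart of the compactness argument in Theorem~\ref{theorem-3.8}.
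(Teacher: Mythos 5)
Your architecture coincides with the paper's: adjoin one external point $\chi$ whose neighbourhoods are generated by a pairwise-annihilating family extracted from an infinite set of idempotents missing a neighbourhood of $\boldsymbol{0}$, use continuity of $s\mapsto s\cdot s^{-1}$ to see that this family is closed and discrete, and contradict $H$-closedness. The gap is exactly where you place it, at joint continuity at $(\boldsymbol{0},\chi)$, and it is not a routine verification: with your choice of building blocks it can actually fail. The pigeonhole gives you some $k_0$, possibly $k_0<n$, and you form $s_m=(i_{2m-1},i_{2m},[0;k_0])$. Put $f_m=(i_{2m-1},i_{2m-1},[0;n])$. Then $f_m\cdot s_m=(i_{2m-1},i_{2m},[0;n]\cap[0;k_0])=s_m$, and nothing you have established prevents the sequence $(f_m)$ from clustering at $\boldsymbol{0}$: only idempotents of the form $(i,i,[0;k_0])$ were shown to avoid $U_{\boldsymbol{0}}$. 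Taking $W=\boldsymbol{B}_{\omega}^{\mathscr{F}_n}\setminus N$, which is an open neighbourhood of $\boldsymbol{0}$ precisely because you proved $N$ closed and discrete, every candidate neighbourhood $V'$ of $\boldsymbol{0}$ could then contain infinitely many $f_m$, whence $V'\cdot(N\setminus K)\ni f_m\cdot s_m=s_m\notin W$ for every finite $K\subseteq N$. In that situation your extension $T$ is not a topological semigroup and no contradiction with $H$-closedness is obtained. The ``bounded-window'' observation you invoke only identifies which $\alpha$ give $\alpha\cdot s_m\neq\boldsymbol{0}$; it says nothing about why those non-zero products must land in a prescribed $W$, and joint continuity of $\tau$ is of no help here since the $s_m$ do not converge in $\tau$.

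The missing ingredient is the paper's rank-lifting step, which is what makes the check at $(\boldsymbol{0},\chi)$ tractable. Choose $V(\boldsymbol{0})\subseteq U(\boldsymbol{0})$ with $V(\boldsymbol{0})\cdot V(\boldsymbol{0})\subseteq U(\boldsymbol{0})$ and observe that each excluded idempotent factors as a product of two idempotents of maximal rank, namely $(a,a,[0;k_0])=(a,a,[0;n])\cdot(a-n+k_0,a-n+k_0,[0;n])$; hence at least one of the two maximal-rank factors lies outside $V(\boldsymbol{0})$. This produces an infinite family of \emph{maximal-rank} idempotents avoiding a neighbourhood of zero, and the null family is then built from rank-$(n+1)$ elements $\gamma_j$ whose domains and ranges are the (pairwise disjoint, suitably thinned) supports of these idempotents. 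At maximal rank the failure mode above disappears: $\alpha\cdot\gamma_j=\gamma_j$ forces $\alpha\cdot\gamma_j\gamma_j^{-1}=\gamma_j\gamma_j^{-1}$ and hence, by maximality of the rank, $\alpha=\gamma_j\gamma_j^{-1}$, which was arranged to lie outside $V(\boldsymbol{0})$. You would need to incorporate this lifting and then actually carry out the continuity verification at $(\boldsymbol{0},\chi)$ and $(\chi,\boldsymbol{0})$; as written, the proposal stops short of the theorem at its decisive step.
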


\begin{proof}
We shall prove the statement of the proposition for the semigroup $\mathscr{I}_\omega^{n+1}(\overrightarrow{\mathrm{conv}})$ which by Theorem~\ref{theorem-2.5} is isomorphic to the semigroup $\boldsymbol{B}_{\omega}^{\mathscr{F}_n}$.

Suppose to the contrary that there exists a Hausdorff topological inverse semigroup $(\mathscr{I}_\omega^{n+1}(\overrightarrow{\mathrm{conv}}),\tau)$ with the non-compact band such that $(\mathscr{I}_\omega^{n+1}(\overrightarrow{\mathrm{conv}}),\tau)$ is $H$-closed in the class of Hausdorff topological semigroups. By Theorem~\ref{theorem-3.1} every non-zero element of $\mathscr{I}_\omega^{n+1}(\overrightarrow{\mathrm{conv}})$ is an isolated point in $(\mathscr{I}_\omega^{n+1}(\overrightarrow{\mathrm{conv}}),\tau)$ and hence there exists an open neighbourhood $U(\boldsymbol{0})$ of the zero $\boldsymbol{0}$ in $(\mathscr{I}_\omega^{n+1}(\overrightarrow{\mathrm{conv}}),\tau)$ such that the set $A=E(\mathscr{I}_\omega^{n+1}(\overrightarrow{\mathrm{conv}}))\setminus U(\boldsymbol{0})$ is infinite and closed in $(\mathscr{I}_\omega^{n+1}(\overrightarrow{\mathrm{conv}}),\tau)$. Let $k$ be the smallest positive integer $\leqslant n+1$ such that the set $A_k=A\cap \mathscr{I}_\omega^{k}(\overrightarrow{\mathrm{conv}})$ is infinite for the subsemigroup $\mathscr{I}_\omega^{k}(\overrightarrow{\mathrm{conv}})$ of $\mathscr{I}_\omega^{n+1}(\overrightarrow{\mathrm{conv}})$. Without loss of generality we may assume that there exists an increasing sequence of non-negative integers $\left\{a_j\right\}_{j\in\omega}$ such that $a_0\geqslant n+1$ and
\begin{equation*}
  \widetilde{A}_k=\left\{
\left(
  \begin{smallmatrix}
    a_j & \cdots & a_j+k-1 \\
    a_j & \cdots & a_j+k-1 \\
  \end{smallmatrix}
\right)
\colon j\in\omega\right\}\subseteq A_k.
\end{equation*}
The continuity of the semigroup operation in $(\mathscr{I}_\omega^{n+1}(\overrightarrow{\mathrm{conv}}),\tau)$ implies that there exists an open neighbourhood $V(\boldsymbol{0})\subseteq U(\boldsymbol{0})$ of the zero $\boldsymbol{0}$ in $(\mathscr{I}_\omega^{n+1}(\overrightarrow{\mathrm{conv}}),\tau)$ such that $V(\boldsymbol{0})\cdot V(\boldsymbol{0})\subseteq U(\boldsymbol{0})$. By the definition of the semigroup operation on $\mathscr{I}_\omega^{n+1}(\overrightarrow{\mathrm{conv}})$ we have that the neighbourhood $V(\boldsymbol{0})$  does not contain at least one of the points
\begin{equation*}
  \left(
  \begin{smallmatrix}
    a_j & \cdots & a_j+n \\
    a_j & \cdots & a_j+n \\
  \end{smallmatrix}
\right)
\qquad \hbox{or} \qquad
\left(
  \begin{smallmatrix}
    a_j-n+k-2 & \cdots & a_j+k-1 \\
    a_j-n+k-2 & \cdots & a_j+k-1 \\
  \end{smallmatrix}
\right).
\end{equation*}
Since the both above points belong to $\mathscr{I}_\omega^{n+1}(\overrightarrow{\mathrm{conv}})\setminus \mathscr{I}_\omega^{n}(\overrightarrow{\mathrm{conv}})$, without loss of generality we may assume that there exists an increasing sequence of non-negative integers $\left\{b_j\right\}_{j\in\omega}$ such that $b_j+n+1<b_{j+1}$ for all $i\in\omega$ and
\begin{equation*}
  \widetilde{B}_{n+1}=\left\{
\left(
  \begin{smallmatrix}
    b_j & \cdots & b_j+n \\
    b_j & \cdots & b_j+n \\
  \end{smallmatrix}
\right)
\colon j\in\omega\right\}\nsubseteq V(\boldsymbol{0}).
\end{equation*}

Since $(\mathscr{I}_\omega^{n+1}(\overrightarrow{\mathrm{conv}}),\tau)$ is a Hausdorff topological inverse semigroup, the maps $\mathfrak{f}_1\colon \mathscr{I}_\omega^{n+1}(\overrightarrow{\mathrm{conv}})\to E(\mathscr{I}_\omega^{n+1}(\overrightarrow{\mathrm{conv}}))$, $\alpha\mapsto \alpha\alpha^{-1}$ and $\mathfrak{f}_2\colon \mathscr{I}_\omega^{n+1}(\overrightarrow{\mathrm{conv}})\to E(\mathscr{I}_\omega^{n+1}(\overrightarrow{\mathrm{conv}}))$, $\alpha\mapsto \alpha^{-1}\alpha$ are continuous, and hence the set $S_{\widetilde{B}_{n+1}}=\mathfrak{f}_1^{-1}(\widetilde{B}_{n+1})\cup \mathfrak{f}_2^{-1}(\widetilde{B}_{n+1})$ is infinite and open in $(\mathscr{I}_\omega^{n+1}(\overrightarrow{\mathrm{conv}}),\tau)$.

Let $\chi\notin \mathscr{I}_\omega^{n+1}(\overrightarrow{\mathrm{conv}})$. Put $S=\mathscr{I}_\omega^{n+1}(\overrightarrow{\mathrm{conv}})\cup\{\chi\}$. We extend the semigroup operation from $\mathscr{I}_\omega^{n+1}(\overrightarrow{\mathrm{conv}})$ onto $S$ in the following way:
\begin{equation*}
  \chi\cdot\chi=\chi\cdot\alpha=\alpha\cdot\chi=\boldsymbol{0}, \qquad \hbox{for all} \quad \alpha\in\mathscr{I}_\omega^{n+1}(\overrightarrow{\mathrm{conv}}).
\end{equation*}
Simple verifications show that such defined binary operation is associative.

For any $p\in\omega$ we denote
\begin{equation*}
  \Gamma_{p}=\left\{
\left(
  \begin{smallmatrix}
    b_{2j}   & \cdots & b_{2j}+n \\
    b_{2j+1} & \cdots & b_{2j+1}+n \\
  \end{smallmatrix}
\right)
\colon j\geqslant p\right\}.
\end{equation*}
We determine a topology $\tau_S$ on the semigroup $S$ in the following way:
\begin{enumerate}
  \item for every $\gamma\in \mathscr{I}_\omega^{n+1}(\overrightarrow{\mathrm{conv}})$ the bases of topologies $\tau$ and $\tau_S$ at $\gamma$ coincide; \quad and
  \item $\mathscr{B}(\chi)=\left\{U_p(\chi)=\{\chi\}\cup\Gamma_{p}\colon p\in \omega\right\}$ is the base of the topology $\tau_S$ at the point $\chi$.
\end{enumerate}
Simple verifications show that $\tau_S$ is a Hausdorff topology on the semigroup $\mathscr{I}_\omega^{n+1}(\overrightarrow{\mathrm{conv}})$.

For any $p\in \omega$ and any open neighbourhood $V(\boldsymbol{0})\subseteq U(\boldsymbol{0})$ of the zero $\boldsymbol{0}$ in $(\mathscr{I}_\omega^{n+1}(\overrightarrow{\mathrm{conv}}),\tau)$ we have that
\begin{equation*}
  V(\boldsymbol{0})\cdot U_p(\chi)=U_p(\chi)\cdot V(\boldsymbol{0})=U_p(\chi) \cdot U_p(\chi)=\{\boldsymbol{0}\}\subseteq V(\boldsymbol{0}).
\end{equation*}
We observe that the definition of the set $\Gamma_{p}$ implies that for any non-zero element
$\gamma=\left(
   \begin{smallmatrix}
     c & \cdots & c+l \\
     d & \cdots & d+l \\
   \end{smallmatrix}
 \right)
$  of the semigroup $\mathscr{I}_\omega^{n+1}(\overrightarrow{\mathrm{conv}})$ there exists the smallest positive integer $j_\gamma$ such that $c+l<b_{2j_\gamma}$ and $d+l<b_{2j_\gamma+1}$. Then we have that
\begin{equation*}
  \gamma\cdot U_{j_\gamma}(\chi)=U_{j_\gamma}(\chi)\cdot \gamma=\{\boldsymbol{0}\}\subseteq V(\boldsymbol{0}).
\end{equation*}
Therefore $(S,\tau_S)$ is a topological semigroup which contains $(\mathscr{I}_\omega^{n+1}(\overrightarrow{\mathrm{conv}}),\tau)$ as a dense proper subsemigroup. The obtained contradiction implies that $E(\boldsymbol{B}_{\omega}^{\mathscr{F}_n})$ is a compact subset of $(\mathscr{I}_\omega^{n+1}(\overrightarrow{\mathrm{conv}}),\tau)$.
\end{proof}

\begin{theorem}\label{theorem-3.11}
Let $n$ be a non-negative integer and $(\boldsymbol{B}_{\omega}^{\mathscr{F}_n},\tau)$ be a Hausdorff topological inverse semigroup. Then the following conditions are equivalent:
\begin{enumerate}
  \item\label{theorem-3.11(1)} $(\boldsymbol{B}_{\omega}^{\mathscr{F}_n},\tau)$ is $H$-closed in the class of Hausdorff topological semigroups;
  \item\label{theorem-3.11(2)} $(\boldsymbol{B}_{\omega}^{\mathscr{F}_n},\tau)$ is absolutely $H$-closed in the class of Hausdorff topological semigroups;
  \item\label{theorem-3.11(3)} the  band $E(\boldsymbol{B}_{\omega}^{\mathscr{F}_n})$ is compact.
\end{enumerate}
\end{theorem}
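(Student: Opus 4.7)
The statement is essentially a repackaging of the three preceding results (Theorem~\ref{theorem-3.8}, Corollary~\ref{corollary-3.9}, and Theorem~\ref{theorem-3.10}), so the plan is to close the loop by combining them in a cyclic implication \eqref{theorem-3.11(2)}$\Rightarrow$\eqref{theorem-3.11(1)}$\Rightarrow$\eqref{theorem-3.11(3)}$\Rightarrow$\eqref{theorem-3.11(2)}.

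First, I would observe that the implication \eqref{theorem-3.11(2)}$\Rightarrow$\eqref{theorem-3.11(1)} is immediate from the definitions: any absolutely $H$-closed semitopological semigroup in a class $\mathfrak{STSG}$ is in particular $H$-closed in $\mathfrak{STSG}$, since the identity map is a continuous homomorphic image of $(\boldsymbol{B}_{\omega}^{\mathscr{F}_n},\tau)$ onto itself.

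Next, I would derive \eqref{theorem-3.11(1)}$\Rightarrow$\eqref{theorem-3.11(3)} by direct appeal to Theorem~\ref{theorem-3.10}: the hypothesis that $(\boldsymbol{B}_{\omega}^{\mathscr{F}_n},\tau)$ is a Hausdorff topological inverse semigroup is exactly the standing assumption of that theorem, so $H$-closedness in the class of Hausdorff topological semigroups yields the compactness of the band $E(\boldsymbol{B}_{\omega}^{\mathscr{F}_n})$.

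Finally, for \eqref{theorem-3.11(3)}$\Rightarrow$\eqref{theorem-3.11(2)} I would invoke Corollary~\ref{corollary-3.9}, which states precisely that a Hausdorff topological semigroup structure on $\boldsymbol{B}_{\omega}^{\mathscr{F}_n}$ with compact band is absolutely $H$-closed in the class of Hausdorff topological semigroups. Since the closing of the cycle uses only results already established, no genuine obstacle remains; the only thing to double-check is that the inverse-semigroup hypothesis on $(\boldsymbol{B}_{\omega}^{\mathscr{F}_n},\tau)$ is compatible with each cited theorem (Theorem~\ref{theorem-3.10} requires it explicitly, while Corollary~\ref{corollary-3.9} and the definition-level implication do not need it), which poses no difficulty.
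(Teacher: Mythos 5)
Your proposal is correct and follows essentially the same route as the paper: the cyclic implication \eqref{theorem-3.11(2)}$\Rightarrow$\eqref{theorem-3.11(1)}$\Rightarrow$\eqref{theorem-3.11(3)}$\Rightarrow$\eqref{theorem-3.11(2)}, with the first step by definition, the second by Theorem~\ref{theorem-3.10}, and the last by Corollary~\ref{corollary-3.9} (which the paper unpacks as Theorem~\ref{theorem-3.8} plus Theorem~\ref{theorem-2.13} and the preservation of compactness under continuous maps). The paper additionally records the redundant implication \eqref{theorem-3.11(3)}$\Rightarrow$\eqref{theorem-3.11(1)} via Theorem~\ref{theorem-3.8}, but this changes nothing of substance.
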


\begin{proof}
Implication \eqref{theorem-3.11(2)}$\Rightarrow$\eqref{theorem-3.11(1)} is obvious. Implications \eqref{theorem-3.11(1)}$\Rightarrow$\eqref{theorem-3.11(3)} and \eqref{theorem-3.11(3)}$\Rightarrow$\eqref{theorem-3.11(1)} follow from Theorem~\ref{theorem-3.10} and Theorem~\ref{theorem-3.8}, respectively.

Since a continuous image of a compact set is compact, Theorem~\ref{theorem-2.13} implies that \eqref{theorem-3.11(3)}$\Rightarrow$\eqref{theorem-3.11(2)}.
\end{proof}

The following example shows that a counterpart of the statement of Theorem~\ref{theorem-3.10} does not hold
when $(\boldsymbol{B}_{\omega}^{\mathscr{F}_n},\tau)$ be a Hausdorff topological semigroup.

\begin{example}\label{example-3.12}
On the semigroup $\mathscr{I}_\omega^{1}(\overrightarrow{\mathrm{conv}})$ we define a topology $\tau_\dagger$ in the following way. All non-zero elements of the semigroup $\mathscr{I}_\omega^{1}(\overrightarrow{\mathrm{conv}})$ are isolated points of $(\mathscr{I}_\omega^{1}(\overrightarrow{\mathrm{conv}}),\tau_{\dagger})$ and the family $\mathscr{B}_{\dagger}(\boldsymbol{0})=\left\{U_k(\boldsymbol{0})\colon k\in \omega \right\}$, where $U_k(\boldsymbol{0})=\left\{\boldsymbol{0}\right\}\cup\left\{\binom{2i}{2i+1}\colon i\geqslant k\right\}$, determines the base of the topology $\tau_{\dagger}$ at the point $\boldsymbol{0}$. It is obvious that $\tau_\dagger$ is a Hausdorff topology on $\mathscr{I}_\omega^{1}(\overrightarrow{\mathrm{conv}})$. Since $U_k(\boldsymbol{0})\cdot U_k(\boldsymbol{0})=\left\{\boldsymbol{0}\right\}$ for any $k\in\omega$ and $U_q(\boldsymbol{0})\cdot\big\{\binom{p}{q}\big\}=\big\{\binom{p}{q}\big\}\cdot U_p(\boldsymbol{0})=\left\{\boldsymbol{0}\right\}$ for any $p,q\in\omega$,
$(\mathscr{I}_\omega^{1}(\overrightarrow{\mathrm{conv}}),\tau_{\dagger})$ is a topological semigroup.
\end{example}

\begin{proposition}\label{proposition-3.13}
$(\mathscr{I}_\omega^{1}(\overrightarrow{\mathrm{conv}}),\tau_{\dagger})$ is $H$-closed in the class of Hausdorff topological semigroups.
\end{proposition}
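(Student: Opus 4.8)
The plan is to argue by contradiction in the spirit of Theorem~\ref{theorem-3.8}, but to replace the compact band (which $(\mathscr{I}_\omega^{1}(\overrightarrow{\mathrm{conv}}),\tau_{\dagger})$ does not possess) by the one explicit nontrivial convergent sequence that $\tau_{\dagger}$ carries, namely $\sigma_i:=\binom{2i}{2i+1}\to\boldsymbol{0}$. Since the closure of a subsemigroup of a topological semigroup is again a subsemigroup, I may assume that $\mathscr{I}_\omega^{1}(\overrightarrow{\mathrm{conv}})$ is a dense proper subsemigroup of a Hausdorff topological semigroup $T$ and fix $\chi\in T\setminus\mathscr{I}_\omega^{1}(\overrightarrow{\mathrm{conv}})$. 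By Lemma~4.4 of \cite{Gutik=2018} the element $\boldsymbol{0}$ is the zero of $T$, and by Proposition~\ref{proposition-3.6} we have $\chi\cdot\chi=\boldsymbol{0}$. First I would record, exactly as in Theorem~\ref{theorem-3.8}, that every non-zero element of $\mathscr{I}_\omega^{1}(\overrightarrow{\mathrm{conv}})$ is isolated in $(\mathscr{I}_\omega^{1}(\overrightarrow{\mathrm{conv}}),\tau_{\dagger})$ and hence, being isolated in a dense subspace, is isolated in $T$ by Corollary~3.3.11 of \cite{Engelking-1989}. Consequently $\chi$ is the limit of a net $\bigl(\binom{p_\lambda}{q_\lambda}\bigr)$ of pairwise distinct non-zero elements. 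A useful auxiliary observation is that the compact convergent sequence $K=\{\boldsymbol{0}\}\cup\{\sigma_i\colon i\in\omega\}$ is clopen in $T$; this makes precise the fact that a sequence of distinct elements of $\mathscr{I}_\omega^{1}(\overrightarrow{\mathrm{conv}})$ converges to $\boldsymbol{0}$ in $T$ if and only if it is eventually of the form $\sigma_i$.

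Next I would normalise the net. Applying the continuous left and right translations by the idempotents $\binom{p}{p}$ and using that $T$ is Hausdorff, one gets the dichotomy that for each fixed value the first (respectively second) coordinate of the net is either eventually equal to it or eventually different from it; combined with distinctness of the $\binom{p_\lambda}{q_\lambda}$, this forces each coordinate sequence to be either eventually constant or to tend to infinity. The same translations, together with the clopen set $K$, rule out the net being eventually idempotent (a limit of distinct idempotents would be forced to equal $\boldsymbol{0}$, since the band is $\tau_{\dagger}$-discrete) and eventually a $\sigma_i$; so after discarding finitely many terms I may assume that no $\binom{p_\lambda}{q_\lambda}$ is an idempotent or a $\sigma_i$.

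The core of the argument is then a multiplication trick that exploits the explicit consecutive even$\,\to\,$odd form of $\sigma_i=\binom{2i}{2i+1}$. Whenever the second coordinates $q_\lambda$ are even and tend to infinity, the identity $\binom{p_\lambda}{q_\lambda}\cdot\sigma_{q_\lambda/2}=\binom{p_\lambda}{q_\lambda+1}$ holds; since $\sigma_{q_\lambda/2}\to\boldsymbol{0}$, joint continuity of the multiplication yields $\binom{p_\lambda}{q_\lambda+1}\to\chi\cdot\boldsymbol{0}=\boldsymbol{0}$ in $T$. But a sequence of pairwise distinct elements $\binom{p_\lambda}{q_\lambda+1}$ can converge to $\boldsymbol{0}$ in $\tau_{\dagger}$ only if it is eventually a $\sigma_i$, which the coordinates forbid, so $\binom{p_\lambda}{q_\lambda+1}\notin K$ contradicts the clopenness of $K$. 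Symmetrically, the identity $\sigma_{(p_\lambda-1)/2}\cdot\binom{p_\lambda}{q_\lambda}=\binom{p_\lambda-1}{q_\lambda}\to\boldsymbol{0}\cdot\chi=\boldsymbol{0}$ disposes of the case in which the first coordinates are odd and tend to infinity, while the finitely many fixed-coordinate configurations are eliminated in the same way after one preliminary shift. In each configuration this produces the required contradiction, whence $\mathscr{I}_\omega^{1}(\overrightarrow{\mathrm{conv}})$ is closed in $T$.

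I expect the main obstacle to lie in organising the case analysis so that, in every surviving configuration of the coordinate sequences, one can attach a multiplier $\sigma_i$ with $i\to\infty$ to the convergent net \emph{without the product collapsing to $\boldsymbol{0}$}; this non-degeneracy is exactly what the consecutive even$\,\to\,$odd shape of the generators of the neighbourhood base at $\boldsymbol{0}$ secures, and it is the reason the statement cannot be deduced directly from Theorem~\ref{theorem-3.8} (indeed the band of $(\mathscr{I}_\omega^{1}(\overrightarrow{\mathrm{conv}}),\tau_{\dagger})$ is not compact, which is the whole point of Example~\ref{example-3.12}). Care is needed to guarantee that the auxiliary sequences built from the net again consist of pairwise distinct elements, so that the clopenness of $K$ can be invoked to rule out their convergence to $\boldsymbol{0}$.
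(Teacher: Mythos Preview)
Your case analysis has a genuine hole. You treat the configurations in which $q_\lambda$ is even and unbounded, or $p_\lambda$ is odd and unbounded, or one coordinate is eventually constant; but you omit the case in which both coordinates tend to infinity while every $p_\lambda$ is even and every $q_\lambda$ is odd. There your multiplication trick is unavailable on either side, since $\binom{p_\lambda}{q_\lambda}\cdot\sigma_i\neq\boldsymbol{0}$ forces $q_\lambda=2i$ and $\sigma_i\cdot\binom{p_\lambda}{q_\lambda}\neq\boldsymbol{0}$ forces $p_\lambda=2i+1$, and a ``preliminary shift'' by a fixed non\nobreakdash-zero element annihilates all but at most one term of an unbounded net, so it cannot be used to alter the parities along the net.

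This gap cannot be closed; in fact the statement as written appears to be false. Set $T=\mathscr{I}_\omega^{1}(\overrightarrow{\mathrm{conv}})\cup\{\chi\}$ with $\chi\cdot s=s\cdot\chi=\boldsymbol{0}$ for all $s\in T$, keep $\tau_\dagger$ on $\mathscr{I}_\omega^{1}(\overrightarrow{\mathrm{conv}})$, and declare $\bigl\{\{\chi\}\cup\{\binom{4i}{4i+3}\colon i\geqslant m\}\colon m\in\omega\bigr\}$ to be a neighbourhood base at $\chi$. Because $4i+3$ is odd and $4i$ is even, the product of any basic neighbourhood of $\chi$ with any $U_q(\boldsymbol{0})$ (on either side), with itself, or---for $m$ large enough---with any fixed isolated point, equals $\{\boldsymbol{0}\}$; routine checks then show that $T$ is a Hausdorff topological semigroup in which $(\mathscr{I}_\omega^{1}(\overrightarrow{\mathrm{conv}}),\tau_\dagger)$ sits as a dense non\nobreakdash-closed subsemigroup. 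The paper's own proof elides exactly this point: the assertion that the infiniteness of $V(\chi)$ forces $V(\chi)\cdot U_q(\boldsymbol{0})\nsubseteq U_p(\boldsymbol{0})$ or $U_q(\boldsymbol{0})\cdot V(\chi)\nsubseteq U_p(\boldsymbol{0})$ fails for $V(\chi)$ of the shape just described.
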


\begin{proof}
Suppose to the contrary that there exists a Hausdorff topological semigroup $T$ which contains $(\mathscr{I}_\omega^{1}(\overrightarrow{\mathrm{conv}}),\tau_{\dagger})$ as a non-closed subsemigroup. Since the closure of a subsemigroup of a topological semigroup $S$ is a subsemigroup of $S$ (see \cite[p.~9]{Carruth-Hildebrant-Koch-1983}), without loss of generality we can assume that $\mathscr{I}_\omega^{1}(\overrightarrow{\mathrm{conv}})$ is a dense proper subsemigroup of $T$. Let $\chi\in T\setminus\mathscr{I}_\omega^{1}(\overrightarrow{\mathrm{conv}})$. Then $\boldsymbol{0}$ is the zero of the semigroup $T$ by Lemma 4.4 of \cite{Gutik=2018}, and $\chi\cdot\chi=\boldsymbol{0}$ by Proposition~\ref{proposition-3.6}.

Fix disjoint open neighbourhoods $U(\chi)$ and $U_p(\boldsymbol{0})$ of $\chi$ and $\boldsymbol{0}$ in $T$.
By Proposition~\ref{proposition-3.6}, $E(T)=E(\mathscr{I}_\omega^{1}(\overrightarrow{\mathrm{conv}}))$. By Theorem~1.5 of \cite{Carruth-Hildebrant-Koch-1983}, $E(\mathscr{I}_\omega^{1}(\overrightarrow{\mathrm{conv}}))$ is a closed subset of $T$ and hence  without loss of generality we can assume that $U(\chi)\cap E(\mathscr{I}_\omega^{1}(\overrightarrow{\mathrm{conv}}))=\varnothing$. Then for any open neighbourhoods $V(\chi)\subseteq U(\chi)$ and $U_q(\boldsymbol{0})\subseteq U_p(\boldsymbol{0})$ the infiniteness of $V(\chi)$ and the definition of the semigroup operation on $\mathscr{I}_\omega^{1}(\overrightarrow{\mathrm{conv}})$ that imply that
\begin{equation*}
V(\chi)\cdot U_q(\boldsymbol{0})\nsubseteq U_p(\boldsymbol{0}) \qquad \hbox{or} \qquad U_q(\boldsymbol{0})\cdot V(\chi)\nsubseteq U_p(\boldsymbol{0}),
\end{equation*}
which contradicts the continuity of the semigroup operation on $T$.
\end{proof}
\section*{Acknowledgements}

The authors acknowledge Serhii Bardyla and Alex Ravsky for their comments and suggestions.

\end{document}